\newtheorem{theorem}{Theorem}[section]
\newtheorem{lemma}[theorem]{Lemma}
\newtheorem{corollary}[theorem]{Corollary}
\newtheorem{proposition}[theorem]{Proposition}
\theoremstyle{definition}
\theoremstyle{remark}
\newtheorem{remark}[theorem]{Remark}
\newcommand \del \partial
\newcommand \be {\begin{equation}}
\newcommand \ee {\end{equation}}
\numberwithin{equation}{section}
\def\ub{\underline{u}}
\def\Lb{\underline{L}}
\def\Cb{\underline{C}}
\def\Eb{\underline{E}}
\def\chib{\underline{\chi}}
\def\hb{\underline{h}}
\def\Hb{\underline{H}}
\def\tr{\text{tr}}
\def\gslash{\mbox{$g \mkern -8mu /$ \!}}
\def\Dslash{\mbox{$D \mkern -13mu /$ \!}}
\def\nablaslash{\mbox{$\nabla \mkern -13mu /$ \!}}
\def\laplacianslash{\mbox{$\triangle \mkern -13mu /$ \!}}
\def\p{\partial}
\newcommand{\di}{\mathrm{d}} 
\newcommand{\D}{\mathcal{D}}
\newcommand{\Rp}{\mathcal{R}^\prime}
\newcommand \bei  {\begin{itemize}}
\newcommand \eei {\end{itemize}}
\begin{document}
\title[Global smooth solution to RME with large data]{Global existence of smooth solution to relativistic membrane equation with large data}

\author[J. Wang]{Jinhua Wang} \email{wangjinhua@xmu.edu.cn}
\address{School of Mathematical Sciences, Xiamen University, Xiamen 361005, China.}

\author[C. Wei]{Changhua Wei} \email{changhuawei1986@gmail.com}
\address{Corresponding author: Department of Mathematics, Zhejiang Sci-Tech University, Hangzhou, 310018, China.}

\date{}


\begin{abstract}
This paper is concerned with the Cauchy problem for the relativistic membrane equation (RME) embedded in $\mathbb R^{1+(1+n)}$ with $n=2, \, 3$. We show that the RME with a class of large (in energy norm) initial data admits a global, smooth solution. The initial data are given by the short pulse type, which is introduced by Christodoulou in his work on the formation of black holes \cite{Christodoulou}. Due to the quasilinear feature of RME, we construct two multipliers adapted to the geometry of membrane and present an efficient way for proving the global existence of smooth solution to the geometric wave equation with double null structure. We also derive the asymptotic geometry
of the future null infinity and find out a nonlinear (expanding) effect at infinity.
\end{abstract}

\maketitle
\tableofcontents

{\sl Key words and phrases}: relativistic membrane; double null condition; short pulse; large energy; global existecne.


\section{Introduction}
\label{Section1}

In this paper, we reconsider the well-known $n$-dimensional relativistic membrane equation (RME), which in  graph description reads
\begin{equation}\label{1.1}
\frac{\partial}{\partial t}\frac{\p_t\phi}{\sqrt{1-(\p_t\phi)^{2}+|\nabla_{x}\phi|^2}}-\sum_{i=1}^n\frac{\partial}{\partial x^{i}}\frac{\p_i \phi}{\sqrt{1-(\p_t\phi)^2+|\nabla_{x}\phi|^2}} = 0,
\end{equation}
where $\p_i\phi=\partial\phi/\partial x^i$, $\p_t\phi=\partial\phi/\partial t$, and $\phi=\phi(t,x^1,\cdots,x^n)$ denotes the graph of the hypersurface embedded in the Minkowski spacetime $\mathbb R^{1+(1+n)}$.

RME is the hyperbolic counterpart of the minimal surface equation in the Euclidean space, which plays an important role in geometry and physics. The dynamics of RME are closely related to the two dimensional fluid dynamics \cite{Hoppe2,Hoppe3}. In particular, the graph description of RME \eqref{1.1} can also describe the ``wave character'' of irrotational relativistic Chaplygin gas, see \cite{Christodoulou1}.

Due to its importance in geometry and physics, there have been a host of recent activities in relativistic membrane in the last decade.
The general formulation of RME is a quasilinear hyperbolic system (see \cite{Christodoulou-2000, WWong-11}), as long as the pullback metric is Lorentzian and the local well-posedness for smooth initial data holds (see \cite{Aurilia-christdoulou, Krieger-Lindblad}). Hoppe \cite{Hoppe4, Hoppe5} derived the RME in graph description \eqref{1.1}, which possesses good structures: \eqref{1.1} satisfies the double null condition \cite{Klainerman-null}, and it could be reformulated as an equation in a divergence form \cite{Lindblad}. Based on these two facts,  Lindblad \cite{Lindblad} proved the global existence of smooth solutions to \eqref{1.1} (for $n\geq 2$) with small, compactly supported data. Similar idea was utilized by Allen, Andersson and Isenberg \cite{Allen} to prove the small data-global existence for timelike extremal submanifold with codimension larger than one. Starting from a geometric perspective, Brendle \cite{Brendle} applied the scheme of \cite{Christodoulou-K-93}, and established a result on stability of a flat hyperplane for the RME with spatial dimension $n\geq3$. Later, Krieger and Lindblad \cite{Krieger-Lindblad} investigated the stability problem of the Catenoid minimal surface for \eqref{1.1} subject to certain generic radial perturbations. The key observation therein is that if one perturbs outside the ``collar region'', the solution exists as long as the resulting deformation stays away from the collar. Donninger, Krieger, Szeftel and Wong \cite{D-K-S-Wong} further studied general nonlinear (in)stability of the catenoid and showed that the linear instability of catenoid is the only obstruction to the globally nonlinear stability. Namely, restricted to the perturbation which is a codimension one Lipschitz manifold transverse to the unstable mode, the solution exists globally in time and converges asymptotically to the catenoid. On the other hand, for the light-cone gauge description of RME, Hoppe \cite{Hoppe4, Hoppe5} exploited its algebraic and geometric properties and found the classical solution. It is remarkable that in \cite{Hoppe2, Hoppe3}, variable transformations are introduced to relate the relativistic membranes with two dimensional fluid dynamics. This further inspired the work of Kong, Liu and Wang \cite{K-L-W}, in which they proved the global stability of the two dimensional isentropic Chaplygin gas without vorticity via the potential theory. Lei and Wei \cite{Lei-Wei} also investigated the relationship between relativistic membrane equation and relativistic Chaplygin gas and proved the global existence of the radial solution to 3D non-isentropic relativistic Chaplygin gas.

Most of the results about RME are concerned with the Cauchy problem with small initial data, while little is known for the large initial data case, until the breakthrough of Christodoulou \cite{Christodoulou}, where he in fact achieved the global existence of the solution to the Einstein vacuum equations with large data. This work has been generalized by Klainerman and Rodnianski \cite{Klainerman-Rodnianski}, by introducing a key idea of relaxed propagation estimates. Inspired by these two works, the first author and Yu \cite{Wang-Yu,Wang-Yu1} considered the characteristic problem for semilinear wave equations satisfying the standard null condition, and showed the global existence of solutions for a class of large initial data, which are of {\it short pulse} type originated from \cite{Christodoulou}. Instead, Miao, Pei and Yu \cite{Miao-Yu} studied the Cauchy problem with short pulse data prescribed on a thin domain of the initial Cauchy hypersurface. In their proof, one could take great advantage of the fully geometric structures of the Minkowski spacetime from the semilinearity of wave equations considered. We here also mention that Yang \cite{Yang-13} obtained a global existence result for semilinear wave equations with large Cauchy data, which is due to slower decay of the initial data at spatial infinity. All these works heavily rely on the {\it null condition} which was first observed by Klainerman \cite{Klainerman-null} and Christodoulou \cite{Christodoulou-null} when considering semilinear wave equations in $\mathbb{R}^{1+3}$ with small data.

Turning to quasilinear wave equations, if there is no null structure, shocks will form in finite time generally. It is well known that Christodoulou \cite{Christodoulou1}, followed by Speck \cite{Speck-shock} and Speck-Holzegel-Luk-Wong \cite{Speck-Hol-Luk-Wong}, proposed a geometric mechanism of shock formation for quasilinear wave equations by studying the corresponding Lorentzian metric. They constructed optical functions associated to the metric and showed that a shock was driven by the intersection of null (characteristic) hypersurfaces. Whereas, we should also mention that Miao-Yu \cite{Miao-Yu1} study a quasilinear wave equation satisfying the null condition in a large data setting, and find out similar mechanism of shock formation. When it comes to the quasilinear wave equation with null structure, for which the global solution is expected, however, it would be possible to find a balance between the curved geometry for the quasilinear wave equations and the flat geometry for the semilinear ones, and present an effective approach for large data problem of the geometrically quasilinear wave equations with null structure. This serves as one of the motivations for this work. On the other aspect, Majda's conjecture \cite{Majda} suggests that the symmetric hyperbolic system with totally linearly degenerate characteristics admits a globally classical solution unless the solution itself blows up in finite time. Our result can also solve this conjecture \cite{Majda} in the cases of irrotational and isentropic perfect fluids, according to the relationship between the relativistic membrane and the Chaplygin gas (see \cite{Matt}).

\subsection{Main results} Let $r=\sqrt{(x^{1})^2+\cdots+(x^n)^2}$ and $\theta$ denote the usual radial and angular coordinates on $\mathbb R^n$. We use $\nabla$ to denote the spacial derivative $\p_i, \, i=1, \cdots, n$. In Minkowski spacetime, we define the following two optical functions $u=\frac{1}{2}(t-r), \, \underline{u}=\frac{1}{2}(t+r)$. Let $\delta$ be a small positive parameter.

Concerning the initial value problem of \eqref{1.1},  our set up for data is similar to that in \cite{Miao-Yu}. We take $\{t=1\}$ as the initial hyperplane and divide it into three parts (see Figure \ref{fig:foliation})
$$
\{t=1\}=B_{1-2\delta}\cup(B_{1}-B_{1-2\delta})\cup(\mathbb R^n-B_{1}),
$$
where
$B_{r}$ denotes the ball centred at the origin with radius $r$.

At first, we let $\phi_{0}=\phi_{1}=0$ on $B_{1-2\delta}\cup (\mathbb R^n-B_{1})$.

Next, we prescribe our initial data $(\phi_{0},\phi_{1})$ on $B_{1}-B_{1-2\delta}$ as follows: $\phi_{0}(r,\theta)$ and $\phi_{1}(r,\theta)$ are smooth functions supported in $r\in(1-2\delta,1)$ and they satisfy the constraints
\begin{subequations}
\begin{align}
\|\nablaslash^{l}(\delta\partial_r)^{k}(\phi_{1}+\partial_{r}\phi_{0})\|_{L^{\infty}(B_{1}-B_{1-2\delta})} & \leq C_{k,l} \delta, \label{1.2}\\
\|\nablaslash^{l}(\delta\partial_r)^{k}\phi_{0}\|_{L^{\infty}(B_{1}-B_{1-2\delta})}+\delta\|\nablaslash^{l}(\delta\partial_r)^{k-1}\phi_{1}\|_{L^{\infty}(B_{1}-B_{1-2\delta})} & \leq \hat{C}_{k,l} \delta, \label{1.3}\\
\|\nablaslash^{l}(\partial_{t}+\partial_{r})^{k}(\delta\partial_r)^{m}\phi\|_{L^{\infty}(B_{1}-B_{1-2\delta})} & \leq C_{k,l,m} \delta, \label{1.3-associated}
\end{align}
\end{subequations}
where $\nablaslash$ denotes the covariant derivative (associated to the induced metric from the flat  Lorentzian metric) on the sphere with constant $t$ and $r$, and $k,\,l,\,m$ are non-negative integers. For simplicity, we use $\texttt{C}_{k,l,m}$ denotes a sequence of positive constants depending on $m,k,l$ appeared in \eqref{1.2}-\eqref{1.3-associated}.
\begin{remark}
We remark that \eqref{1.2} and \eqref{1.3} are conditions on the freely disposable data $(\phi_0, \phi_1)$, while \eqref{1.3-associated} is a condition on the derived data as it involves $\partial_t^k \phi$ for $k \geq 2$ which is obtained from $(\phi_0, \phi_1)$ by solving the equation and its $k-2$th differential consequence, obtained by applying $\partial_t^{k-2}$, along the initial hypersurface. Furthermore, the condition \eqref{1.3} can be replaced by $$\|\nablaslash^{l}(\delta\partial_r)^{k}\phi_{0}\|_{L^{\infty}(B_{1}-B_{1-2\delta})} \lesssim \delta,$$ because the bound on the 1st term in the left hand side of \eqref{1.3} together with \eqref{1.2} implies the bound on the 2nd term in the left hand side of \eqref{1.3}.
\end{remark}
 In the following, we always use $f\lesssim g$ to denote $f\leq C g$ for some universal and positive constant $C$. And $f \sim g$ means $f \lesssim g$ and $g \lesssim f$. We will adopt the following notation for any function $f$: We write $f=O(r^b)$ for some $b\in \mathbb{R}$, if for any integer $k\geq0$, we can bound $|r^k\p^k f|\lesssim r^b$; and $f=o(r^b)$ if  $\lim_{r \rightarrow +\infty} \frac{|r^k\p^k f|}{r^b}= 0$.

\begin{remark}
We have several remarks on the initial data and energies:
\begin{itemize}
\item (\emph{Short pulse data})
The above initial data sets are called {\it short pulse} data and the existence of this kind of Cauchy data is shown in Appendix \ref{existence of data}. During the propagation of our data, the hyperbolicity of the RME \eqref{1.1} which is equivalent to $Q:=\eta^{\alpha\beta}\partial_{\alpha}\phi\partial_{\beta}\phi>-1$ holds, since $Q=-L\phi \Lb\phi+|\nablaslash \phi|^2$ and we always have $|\Lb\phi| \lesssim 1$, $|L\phi|\lesssim \delta$, $|\nablaslash\phi|\lesssim \delta^{\frac{3}{4}}$ (refer to Lemma \ref{lemma-apriori-estimate} for these estimates and the relevant notations here are given later in Section \ref{sec-comm-proof}) as \eqref{1.1} evolves.
\item (\emph{Geometric/ Physical meaning of the constraint \eqref{1.3-associated}}). Similar to Miao-Pei-Yu \cite{Miao-Yu}, the constraint \eqref{1.3-associated} is imposed to ensure that the incoming energy is as small as possible and the total energy will be propagated along the outgoing direction.
\item  (\emph{Largeness of the initial higher order energy}). The initial first order energy should be small to guarantee the hyperbolicity of \eqref{1.1}. In contrast, the higher order initial energies ($k>1$) could be arbitrarily large if we choose $\delta$ small enough:
\begin{align*}
E^{2}_{0} & =\int_{\mathbb R^n}(|\nabla_{x}\phi_0|^{2}+|\phi_{1}|^2)dx\sim \delta, \\
E^{2}_{k} & =\int_{\mathbb R^n}(|\nabla_{x}^{k+1}\phi_{0}|^2+|\nabla_{x}^{k}\phi_{1}|^2)dx\sim \delta^{-2k+1}.
\end{align*}
\end{itemize}
\end{remark}

Our main result can be summarized as follows:
\begin{theorem}
There exists a positive constant $\delta_{0}$ depending on a finite subset of the $\texttt{C}_{m,k,l}$ arising from the constraints \eqref{1.2}-\eqref{1.3-associated}, such that
the Cauchy problem \eqref{1.1} with the short pulse initial data admits a unique and globally smooth solution in $[1,+\infty)\times \mathbb R^n$ with $n=2, \, 3$ providing that $0
<\delta<\delta_{0}$. Moreover, we show an expanding effect along the nearby bundles of flow lines of an incoming  null (and almost geodesic) vector field near the null infinity (which holds true in both small and large data settings), and this nonlinear effect is due to the curved geometry of membrane.
\end{theorem}
\begin{remark}\label{rk-diverge}
In general, a geometric mechanism of shock formation for quasilinear wave equations violating the null condition is driven by the focusing of null hypersurfaces (or characteristic lines), see for instance \cite{Christodoulou1, Hol-Speck-K-Wong, Speck-shock, Speck-Hol-Luk-Wong}. Analogous mechanism is found in \cite{Miao-Yu1} for a quasilinear wave equation with null condition and large data. Another geometric shock formation is also demonstrated in \cite{K-weak-null-18} by showing that quasilinear equations satisfying the weak null condition (introduced by Lindblad and Rodnianski \cite{Lind-Rod}) and admitting global existence of solutions (with small data) can exhibit ``shock formation at infinity''. As a complement, for the RME which satisfies the double null condition, our result reveals the asymptotic geometry of the future null infinity (of the membrane) and capture the nonlinear effect at infinity.
\end{remark}

\begin{figure}
\centering
\includegraphics[width=6.0in]{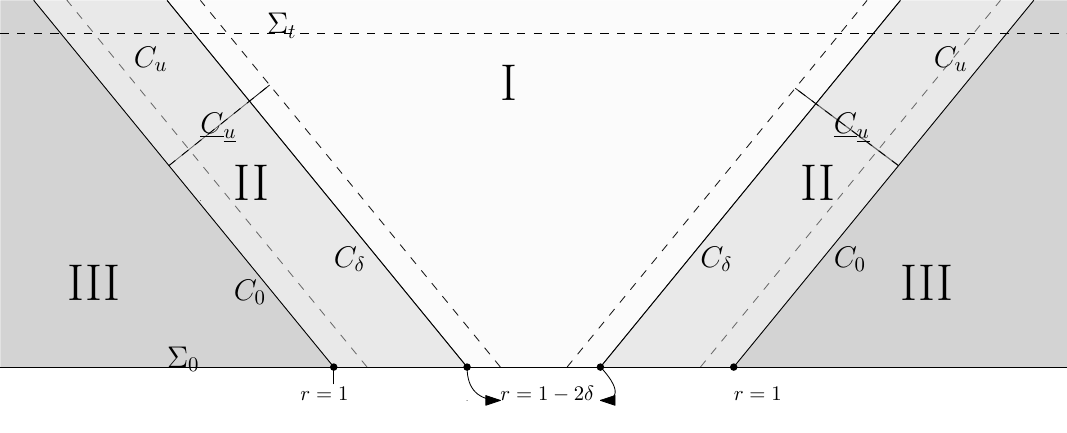}
\caption{The foliation}
 \label{fig:foliation}
\end{figure}

\subsection{Comments on the proof}\label{sec-comm-proof}

We recall the standard null foliation (See Figure \ref{fig:foliation}) of  the Minkowski spacetime ($\mathbb R^{1+n}, \eta$) where $\eta$ is the flat Lorentzian metric. Reminding the two optical functions $u, \, \ub$, we use $C_{u}$ to denote the level surface of $u$, and $\underline{C}_{\underline{u}}$ the level surface of $\underline{u}$. $S_{u,\underline{u}}$ denotes the intersection of $C_{u}$ and $\underline{C}_{\underline{u}}$, which is an $(n-1)$-sphere. The double null foliation of the Minkowski spacetime $(\mathbb R^{1+n}, \eta)$ is referred to as  the outgoing null foliation $\{C_{u}|u\in\mathbb R\}$ and incoming null foliation $\{\underline{C}_{\underline{u}}|\underline{u}\in\mathbb R\}$ (see the obliquely dotted lines in Figure \ref{fig:foliation}). The two corresponding null vectors are given by
\begin{equation*}
L=\partial_{\underline{u}}=\partial_{t}+\partial_{r},\quad \underline{L}=\partial_{u}=\partial_{t}-\partial_{r}.
\end{equation*}
The future Cauchy development of the initial hypersurface $J^+(\Sigma_1)$ is divided into three regions: {\rm I, II, III}, by the two null cones $C_0, C_\delta$, which are labelled as the obliquely solid lines in Figure  \ref{fig:foliation}.

Due to the finite speed of propagation of wave equations, the RME has a trivial solution in the region {\rm III}.
Our proof in the regions {\rm I} and {\rm II} composes of four parts:
\bei
\item {\bf Step 1.} The global existence of the solution $\phi$ to the RME in the short plus region {\rm II}. In this region, the high order energy of $\phi$ would be large. Its proof depends on the geometry of membrane and the null structure of RME. This is the main body of this paper.
\item {\bf Step 2.} The smallness for $\phi$ and its higher order derivatives on $C_\delta$. This is done by integrating along the integral curves of $L$ on $C_\delta$ and making use of the fact that $\phi$ vanishes on the initial sphere $C_{\delta}\cap\Sigma_{1}$.
\item {\bf Step 3.} The global existence in the small data region {\rm I}.
\item {\bf Step 4.} The expanding effect near the null infinity of membrane.
\eei

\subsubsection{The structures of RME}\label{sec-stru-RME}
The main difference between quasilinear and semilinear wave equations is that they are related to different geometry.
This reflects on the fact that the quasilinear wave equation \eqref{1.1} can be written as a geometric wave equation
\begin{equation}\label{RME-curved-box}
\Box_{g(\partial\phi)} \phi=0,
\end{equation}
where $$g_{\alpha\beta} (\partial\phi) =\eta_{\alpha\beta}+\partial_{\alpha}\phi\partial_{\beta}\phi$$ is the Lorentzian metric depending on the first order derivative of the unknown $\phi$ and $\Box_{g(\partial\phi)}$ is the associated wave operator. That is to say,  ($\mathbb{R}^{1+n}, g(\partial\phi)$) is the geometry adapted to the RME \eqref{RME-curved-box}, while the corresponding geometry for semilinear wave equation is the Minkowski spacetime ($\mathbb{R}^{1+n}, \eta$).  This kind of geometric viewpoint for quasilinear wave equation is initiated in \cite{Christodoulou1} to describe the geometric feature of shocks for the relativistic Euler equations and later in \cite{Miao-Yu1, Speck-Hol-Luk-Wong} for general classes of quasilinear wave equations.

Taking derivatives on \eqref{RME-curved-box} yields the higher order RME, which again is a geometric wave equation with inhomogeneous terms satisfying the double null structure, see Section \ref{subsec-The high order equation and double null condition}.
\begin{remark}
In regard to the hypersurfaces $C_u$, the level sets of $u$ which is defined relative to the Minkowski metric $\eta$, it would be crucial that these hypersurfaces be non-timelike with respect to the metric $g_{\alpha \beta}$.
Because in {\bf Step 3} when we treat the problem in the region {\rm I}, the data on the common boundary $C_\delta$ are induced by the solution in the region {\rm II} $\cup$ {\rm III}. This depends crucially on the property of the region {\rm II} $\cup$ {\rm III} being a domain of dependence, which reduces to the property that $C_\delta$ is non-timelike relative to $g_{\alpha\beta}$. This property will follow by the argument below.

For any vector $X$, there is $g(X,X) = \eta(X,X)+ (X\phi)^2$. Hence if $X$ is non-timelike relative to $\eta$, it is non-timelike relative to $g$ as well. Now a vector tangent to $C_u$ is spacelike or null relative to $\eta$ and hence it is non-timelike relative to $g$. Thus $C_u$ is non-timelike with respect to $g$. Similarly with $\ub$ in the role of $u$ and $\Cb_{\ub}$ in the role of $C_u$.

Another aspect corresponding to the non-timelike nature of $C_u$ and $\Cb_{\ub}$ is that both of the gradients (with respect to $g$) of $u$ and $\ub$ are causal relative to $g$, see \eqref{g-grad-u-ub-causal}.
\end{remark}

\subsubsection{Multipliers adapted to the geometry of membrane}
The strategy of our proof is in spirit of the recent studies of large data problem for semilinear wave equations \cite{Miao-Yu} (see also \cite{Wang-Yu,Wang-Yu1}), but in a quasilinear setting. The RME is a geometric wave equation on the curved background $(\mathbb{R}^{1+n}, g(\partial\phi))$. Therefore the null frame $(L,\underline{L})$ in Minkowski spacetime is no longer causal (non-spacelike) with regard to $g(\p \phi)$. If we still apply the vector fields $L, \, \underline{L}$ as multipliers, and treat those quasilinear terms in the RME via integration by parts, then the resulted energies associated to $L$ and $\Lb$ respectively are not positive.  To be more specific, in the energy argument for $\phi_N$ (the highest order derivative of $\phi$) which satisfies the highest order RME \eqref{3.18},  the energy associated  to $L$ (with $L$ being the multiplier) is roughly
\begin{equation}\label{modified-energy-L-simplify}
 \int_{C_u} |L\phi_N|^2 + \int_{\Cb_{\ub}}  \left( 1 - \frac{1}{2} |\nablaslash\phi|^2 \right)  |\nablaslash\phi_N|^2  + \frac{3}{8} |\Lb\phi|^2 |L\phi_N|^2  -  \frac{1}{8} |L\phi|^2 |\Lb\phi_N|^2 + \cdots
\end{equation}
where we use $\cdots$ to denote terms with indeterminate signs.
Notice that, $\phi$ is denoted as the unknown solution of the RME, and there is $|L\phi| \lesssim \delta, \, |\nablaslash\phi | \lesssim \delta^{\frac{3}{4}}, \, |\Lb \phi|\lesssim 1$. Due to the presence of the non-positive terms such as $-\int_{\Cb_{\ub}} \frac{1}{8} |L\phi|^2 |\Lb\phi_N|^2$, \eqref{modified-energy-L-simplify} will never be positive.
 A similar issue occurs for the energy associated to $\underline{L}$, which takes the form of
\begin{equation}\label{modified-energy-Lb-simplify}
\int_{\Cb_{\ub}} |\Lb\phi_N|^2  + \int_{C_u} \left( 1 - \frac{1}{2} |\nablaslash\phi|^2 \right) |\nablaslash\phi_N|^2 + \frac{3}{8 } |L\phi|^2 |\Lb\phi_N|^2  - \frac{1}{8 } |\Lb\phi|^2 |L\phi_N|^2 + \cdots
\end{equation}
Note that, for the large data problem, there is a hierarchy in using $L$ and $\Lb$ as the multipliers in energy estimates \cite{Miao-Yu,Wang-Yu,Wang-Yu1}.
But now, the non-positivity of \eqref{modified-energy-L-simplify} and \eqref{modified-energy-Lb-simplify} suggests that this hierarchy of energy estimates is not allowed (although \eqref{modified-energy-L-simplify} $+$ \eqref{modified-energy-Lb-simplify} could be non-negative, but not \eqref{modified-energy-L-simplify} or \eqref{modified-energy-Lb-simplify} respectively). These would be explained in more detail in Remark \ref{rk-quailinear-bdry}. In a word, to establish the hierarchy of energy estimates for the RME (in the large data setting),  $L$ and $ \underline{L}$ are not eligible for multipliers, since they are not causal with respect to $g(\partial\phi)$, and hence the associated energies are not positive.

In regard of the above difficulties, we should pay attention to the geometry of membrane $(\mathbb{R}^{1+n}, g(\partial\phi))$. We employ the modified vector fields $\tilde L, \, \underline{\tilde L}$:
\begin{equation}\label{def-L-Lb-tilde}
\tilde{L}=L+(L\phi)^2\underline{L} \quad\text{and}\quad \tilde{\underline{L}}=\underline{L}+(\underline{L}\phi)^2L,
\end{equation}
which are causal with respect to $g(\partial\phi)$. Moreover, $\tilde{L}$ and $\tilde{\Lb}$ inherit the main features of $L, \, \Lb$ in the following sense. They have signature $+1$ and $-1$ (see the definition for signature in Section \ref{section null form}) respectively, and quantitatively, there is the equivalence $(\tilde L,\underline{\tilde L}) \sim (L, \Lb)$. In the meantime, taking $\tilde L, \, \underline{\tilde L}$ as multipliers ensures the following coercive energies
\begin{align*}
& \int_{C_u} |L \phi_N|^2+|L\phi|^2|\nablaslash \phi_N|^2+|L\phi|^4|\underline{L} \phi_N|^2 + \int_{\Cb_{\ub}} |\nablaslash \phi_N|^2+|\underline{L}\phi|^2 |L \phi_N|^2+|L\phi|^2|\underline{L} \phi_N|^2,\\
& \int_{C_u} |\nablaslash \phi_N|^2+|\underline{L}\phi|^2|L \phi_N|^2+|L\phi|^2|\underline{L} \phi_N|^2 + \int_{\Cb_{\ub}}  |\underline{L} \phi_N|^2+|\underline{L}\phi|^2|\nablaslash \phi_N|^2+|\underline{L}\phi|^4|L \phi_N|^2.
\end{align*}

In fact, $\tilde L$ and $\underline{\tilde L}$ are the leading parts of $-2D u,$ $-2D \ub$, where $D u, \, D \ub$ mean taking the gradient in terms of $g(\p\phi)$ and $D u, \, D \ub$ are both causal with respect to $g(\partial\phi)$ as well. Meanwhile, in contrast to the intrinsic method in the work \cite{Christodoulou1, Miao-Yu1, Speck-Hol-Luk-Wong}, we do not introduce optical functions associated to $g(\p\phi)$ to foliate the membrane $(\mathbb{R}^{1+n}, g(\partial\phi))$ with the ``genuine'' double null foliation. Instead, we stick to the Minkowski null foliation $\{C_{u}|u\in\mathbb R\}$ and $\{\underline{C}_{\underline{u}}|\underline{u}\in\mathbb R\}$, so that we can make full use of the symmetries of Minkowski spacetime. So far, it is natural to take $D u$, $D \ub$ or the related ones $\tilde L, \, \underline{\tilde L}$ as the candidates for multipliers, since these four vector fields are not necessarily null but causal with respect to $g(\partial\phi)$. In application, we choose the intermediates $\tilde L, \, \underline{\tilde L}$ which are not only effective in handling the quasilinear feature of RME, but also avoid the long story of the intrinsic method. Besides, the usage of $\tilde L, \, \underline{\tilde L}$ as multipliers rather than $-2D u$, $-2D \ub$ also saves us quite a lot of tedious calculations.

\subsubsection{The expanding effect near the null infinity of membrane}\label{sub:1.3.2}
Referring to \cite{Christodoulou-K-93}, we also explore the asymptotical behavior of the geometry of membrane (we take $n=3$ for instance). The following results hold true even in the small data setting.
We introduce a null frame $\{e_3, e_4, e_A, \, A=1, 2\}$ adapted to the sphere $S_{u, \ub}$ (see Section \ref{sec-null-frame}) with respect to the geometry of membrane $(\mathbb{R}^{1+3}, g(\p \phi))$, such that the incoming null vector field $e_4$ is almost affine, namely $D_4 e_4 = O(r^{-3}) e_4 + O(r^{-3}) e_A$ (see \eqref{D-4-4-null-frame}). We always use $D$ to denote the covariant derivative associated to $g(\p \phi)$.

By the previously obtained decay results, we can define the limitation $\Xi_1\doteq \lim\limits_{r \rightarrow \infty} \left( r  e_4  \phi \right)$. Let $\chib_{AB}$ be the second fundamental form of $S_{u, \ub}$ along $e_4$.
Then (see Theorem \ref{lem-2nd-form}), $$ \tr \chib + \frac{2}{r} = \frac{\Hb}{r^{3}} + O( r^{-3-\frac{1}{2}}) \quad \text{with} \quad \Hb = \frac{1}{2} (\Xi_1)^2.$$ In other words, $\lim\limits_{r \rightarrow \infty} r^2 (r \tr \chib + 2) = \Hb >0$ indicates that the volume of the projected (onto $S_{u, \ub}$) cross-sectional area of the bundle of flow lines of $e_4$ has an expanding effect near the null infinity, see Figure \ref{fig:expanding}: The black lines denote the Minkowski picture, while the red lines represent the membrane picture.
Actually, this expanding effect is due to the curvature of $g (\p \phi)$, as manifested by the Raychaudhuri equation along $e_4$ (refer to Theorem \ref{thm-memory}):
\begin{equation*}
\frac{\p}{\partial u} \Hb = -  \lim\limits_{r \rightarrow \infty}  r^{3} R_{4  4} = \frac{1}{2} \partial_u (\Xi_1)^2,
\end{equation*}
where $R_{\mu \nu}$ is the Ricci tensor of $g(\p \phi)$.
\begin{figure}\label{fig:expanding}
\centering
\includegraphics[width=2.5in]{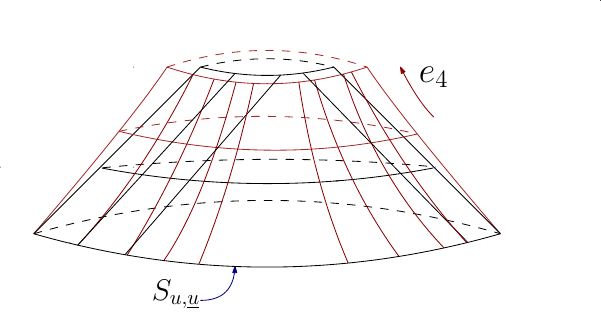}
 \caption{Expansion projected onto $S_{u, \ub}$}
\end{figure}
 In the mean time, we manage to show that (see Section \ref{sec-expanding-e3}), $$D_4  e_3 = \frac{\Hb}{r^3} e_3 + O( r^{-3-\frac{1}{2}}) e_3 + O( r^{-3-\frac{1}{2}}) e_B,$$ with $e_B$ tangent to $S_{u, \ub}$

Another interpretation for this nonlinear effect is: The ``Hawking mass'' $m$ of $S_{u, \ub}$ has the asymptotical behavior $$m = \frac{M}{r} + O(r^{-2}) \quad \text{with} \quad M = \frac{1}{32 \pi} \int_{S^2} (\Xi_1)^2 \di \mu_{\gamma} >0.$$ Here $\gamma$ is the standard metric on $S^2$.
Although $\lim\limits_{r\rightarrow \infty} m =0$,
we have $\lim\limits_{r\rightarrow \infty} rm =M >0$ and
\begin{equation*}
\frac{\p M}{\p u} = - \frac{1}{16 \pi} \int_{S^2} \left( \lim\limits_{r \rightarrow \infty}  r^{3} R_{4  4} \right) \di \mu_{\gamma} = \frac{1}{32 \pi} \int_{S^2} \p_u (\Xi_1)^2 \di \mu_{\gamma}.
\end{equation*}
This is an analogue of gravitational wave memory \cite{Christodoulou-memory, Christodoulou-K-93}.

\subsection{Arrangement of the paper}

We arrange our paper as follows: In Section \ref{Section2}, we give some preliminaries including notations, energy equalities, Gr\"{o}nwall inequalities and so on. Section \ref{Section-large-data} is devoted to proving the global existence of the solution in the short pulse region {\rm II}. The smallness of the solution on the last slice $C_{\delta}$ is justified in Section \ref{Section4}. In Section \ref{Section-small-data}, we prove the global existence of the smooth solution in the small data region {\rm I}. In Section \ref{sec-blow up-scri}, we show the expanding effect near the null infinity. At last, we collect some calculations and prove the existence of initial data 
in the appendix.


\section{Preliminaries}
\label{Section2}
In this section, we mainly introduce some necessary preliminaries used repeatedly throughout the paper.

\subsection{Notations}\label{Subsection2.1}
With regard to the indices, the Greek indices $\alpha,\beta, \mu,\nu, \cdots$ range from 0 to n. Specifically, we use the Latin indices $a,b,\cdots$ to denote the null coordinates $u$ and $\underline{u}$, and $\theta, \omega, \varphi, \cdots$ to denote the angular coordinates on $S_{u,\underline{u}}$. The capital Latin letters $A, B, \cdots$ will also be used to denote the indices on $S_{u,\underline{u}}$. In the following, Einstein's summation convention is also used, which means that repeated upper and lower indices are summed over their ranges.

Let $\eta_{\mu \nu}$ be the standard flat Lorentzian metric, $d^{n+1}x$ or $d^nxdt$ denotes the volume form in the Minkowski spacetime.
We use $g_{\mu\nu}$ to denote the Lorentzian metric associated to the geometry of membrane, and $D$ the associated Levi-Civita connection. $\gamma_{AB}$ denotes the induced metric (from the Minkowski metric) on $S_{u,\underline{u}}$  and $\nablaslash$ the connection associated to $\gamma_{AB}$.

We also use the Klainerman's vector fields $Z=\{\partial,\Gamma\} =\{\partial_{t},\partial_{i},\Omega_{0i},\Omega_{ij}, S\}$, $i, j =1, \cdots, n$ as  commutators, where $\Omega_{0i}=t\partial_i+x_{i}\partial_{t}$ is the Lorentz boost, $\Omega_{ij}=x_{i}\partial_{j}-x_{j}\partial_{i}$ denotes the rotation and $S=t\partial_{t}+r\partial_{r}$ denotes the scaling. We let $\bar{\p}=\{L, \nablaslash\}$ be the vector fields tangential to the outgoing null cone $C_u$.
For any given function $f$, we use the shorthand notation $\Omega^{m}f$ to denote $ \prod_{k=1}^{m}\Omega_{i_{k}j_{k}}f$, similar conventions also apply to $Z^{m}f$. It is clear that in the short pulse region $\mathcal{D}=\{(t,x)|1\leq t<+\infty, 0\leq u \leq\delta\}$, we have $r\sim \underline{u}$.
Therefore,
$$
\Omega^{k}f \sim r^{k}|\nablaslash^{k} f|\sim \underline{u}^{k}|\nablaslash^{k}f|. 
$$

Suppose $u, \,u^\prime \in(0,u^{*})$ and $\underline{u}, \,\underline{u}' \in(1-u^{*},u^{*})$. Let $C_{u}^{\underline{u}'}$ denote the part of $C_{u}$ with $1-u^{*}\leq \underline{u}\leq \underline{u}'$ and $\underline{C}_{\underline{u}}^{u'}$ denote the part of $\underline{C}_{\underline{u}}$ with $0\leq u\leq u'$. We shall use $\Sigma_{1}$ to denote initial hypersurface $\{t=1\}$. For the energy norms, given any function $f$, we define
\begin{align*}
\|f\|_{L^{p}(C_{u}^{\underline{u}})} & \doteq \left(\int_{1-u^{*}}^{\underline{u}}\int_{S_{u,\underline{u}'}}|f|^{p}r^{n-1}d\underline{u}'d\sigma_{S^{n-1}}\right)^{1/p}, \\
\|f\|_{L^{p}(\underline{C}_{\underline{u}}^{u})} &\doteq \left(\int_{0}^{u}\int_{S_{u',\underline{u}}}|f|^{p}r^{n-1}du' d\sigma_{S^{n-1}}\right)^{1/p},
\end{align*}
and
$$\|f\|_{L^{\infty}(D)} \doteq ess\sup_{(x^\mu)\in D}|f(x^\mu)|.$$
For simplicity, we always use $\|f\|_{L^{p}(C_{u})}$ to denote $\|f\|_{L^{p}(C_{u}^{\underline{u}})}$ and $\|f\|_{L^{p}(\underline{C}_{\underline{u}})}$ to denote $\|f\|_{L^{p}(\underline{C}_{\underline{u}}^{u})}$.

\subsection{Null Forms}\label{section null form}
For a real valued quadratic form $\mathcal{Q}$ defined on $\mathbb{R}^{1+n}$, it is called a \emph{null form} if for any null vector $\xi \in \mathbb{R}^{n+1}$, we have $\mathcal{Q}(\xi, \xi) = 0.$

There are examples of null forms, for $0\leq \alpha, \beta \leq n$ and $\alpha \neq \beta$,
\begin{equation}\label{basic null forms}
\begin{split}
 Q_0 (\xi, \chi) &= \eta^{\mu \nu} \xi_{\mu}\chi_{\nu}, \\
 Q_{\alpha \beta}(\xi, \chi) &=\p_\alpha  \xi  \p_\beta\chi - \p_\alpha \chi \p_\beta \xi.
 \end{split}
\end{equation}
Given scalar functions $\varphi,\psi$ and a null form $Q(\xi,\chi)=Q^{\alpha\beta}\xi_{\alpha}\chi_{\beta}$,
we use $\mathcal{Q}(\varphi,\psi)$ as a short hand for $\mathcal{Q}(\varphi,\psi)=Q^{\alpha\beta}\partial_{\alpha}\varphi\partial_{\beta}\psi$, then it is easy to see
\begin{equation*}
    Z \mathcal{Q} (\varphi, \psi) = \mathcal{Q}(Z \varphi,  \psi) + \mathcal{Q}(\varphi, Z \psi) + \sum a_{\alpha \beta} \mathcal{Q}_{\alpha \beta} (\varphi, \psi),
\end{equation*}
for some constants $a_{\alpha \beta}.$
In particular,
\begin{equation}\label{commu-Q-Gamma}
[\partial_\mu, Q_0] = 0, \quad [\Omega_{0i}, Q_0]=0, \quad [\Omega_{ij}, Q_0]=0, \quad [S, Q_0]=-2Q_0,
\end{equation}
where for any commutator $\Gamma$ and functions $\varphi, \, \psi$, we define the following Lie bracket
\begin{equation*}
[\Gamma, \mathcal{Q}](\varphi, \psi) = \Gamma \mathcal{Q}(\varphi, \psi) - \mathcal{Q}(\Gamma\varphi, \psi) -\mathcal{Q}(\varphi, \Gamma\psi).
\end{equation*}
We remark here that in the membrane equation, only $Q_{0}$ plays a key role in this paper. The null forms can also be interpreted by a signature consideration. Indeed, if we assign the following signatures to $L, \, \Lb$ and $\nablaslash$
\begin{equation}\label{def-Sign}
\text{Sign}(L) =+1, \quad \text{Sign}(\Lb) =-1,\quad \text{Sign}(\nablaslash)=0,
\end{equation}
it is easy to check that the null forms \eqref{basic null forms} have the signature $p$ with $-2< p <2$ and $p \in \mathbb{N}$. Specifically, the signature for the quadratic null forms could not reach $-2$, which entails that terms involving two $\Lb$ derivatives such as $\underline{L}\varphi\underline{L}\psi$ are forbidden in the null forms \eqref{basic null forms}.

 This could be generalized to {\it double null forms}, which are cubic and take the form of $\mathcal{Q}(\varphi,\mathcal{Q}(\chi,\psi))$. The double null form also has a signature $p$ with $-2< p< 2$ and $p \in \mathbb{N}$ .

\subsection{Formulation of the relativistic membrane equation}
In this section, we recall the derivation of the RME \cite{Hoppe4, Hoppe5}, and then treat it as a geometric wave equation evolving in a curved spacetime. This idea is inspired by the breakthrough of Christodoulou \cite{Christodoulou1}, in which he showed the shock formation for the isentropic and irrotational relativistic Euler equations (the Chaplygin gas is not included) by deeply investigating the acoustic geometry.

\subsubsection{The Lagrangian description}\label{Langrangian-M-eq}
Let
\begin{equation}\label{2.3}
\eta=\eta_{\mu\nu}dx^{\mu}\otimes dx^{\nu}=-dt^{2}+\sum_{1 \leq i \leq n+1}(dx^{i})^2,
\end{equation}
be the standard flat metric in the Minkowski spacetime $\mathbb R^{(1+n)+1}$. Assume $x^{\mu}=x^{\mu}(\psi^{0},\cdots,\psi^{n})$ be an $1+n$ dimensional timelike submanifold $\mathcal{M}$ embedded in $\mathbb R^{(1+n)+1}$. Then we have the following induced volume for $\mathcal{M}$
\begin{equation}\label{2.4}
\text{Vol} (\mathcal{M})=\int_{\mathcal{M}}\sqrt{g}d^{1+n}\psi=\int_{\mathcal{M}}\sqrt{\Big| \det \left(g_{\alpha\beta}=\frac{\partial x^{\mu}}{\partial \psi^{\alpha}}\frac{\partial x^{\nu}}{\partial \psi^{\beta}}\eta_{\mu\nu}\right)\Big|}d^{1+n}\psi.
\end{equation}
We call $\mathcal{M}$ the relativistic membrane if it satisfies the Euler-Lagrange equation of \eqref{2.4}. It is straightforward to calculate and deduce the membrane equation
\begin{equation}\label{2.5}
\Box_g x^\mu = \frac{1}{\sqrt{g}}\partial_{\alpha}\left(\sqrt{g}g^{\alpha\beta}\partial_{\beta}x^{\mu}\right)=0,\quad \mu=0,1,\cdots,n+1.
\end{equation}

Now we are interested in the graphic description. Namely, if we let $x^{\mu}=\psi^{\mu}$ for $\mu=0,1,\cdots,n$, we have
\begin{equation}\label{wave-coordinate}
\partial_{\alpha}\left(\sqrt{g}g^{\alpha\beta}\right)=0,\quad \beta=0,1,\cdots,n.
\end{equation}
Then the only one single equation for $x^{n+1}=\phi(x^{0},\cdots,x^{n})$ is given by
\begin{equation}\label{2.7}
\Box_g \phi = \frac{1}{\sqrt{g}}\partial_{\alpha}\left(\sqrt{g}g^{\alpha\beta}\partial_{\beta}\phi\right)=0.
\end{equation}
Combining \eqref{wave-coordinate} with \eqref{2.7}, we obtain
\begin{equation}\label{2.8}
g^{\alpha\beta}\partial_{\alpha}\partial_{\beta}\phi=0,
\end{equation}
where the associated metric takes the following form
\begin{equation}\label{2.9}
g_{\alpha\beta}=\eta_{\alpha\beta}+\partial_{\alpha}\phi\partial_{\beta}\phi,\quad g^{\alpha\beta}=\eta^{\alpha\beta}-\frac{\partial^{\alpha}\phi\partial^{\beta}\phi}{g},
\end{equation}
and the determinant is given by
\begin{equation}\label{2.10}
g \doteq1+Q(\phi,\phi)=1+\eta^{\alpha\beta}\partial_{\alpha}\phi\partial_{\beta}\phi.
\end{equation}
We should notice that in \eqref{2.9} the indices are raised by the flat Lorentzian metric $\eta$. This convention will be used throughout the paper unless otherwise stated.

The resulted equation \eqref{2.8} is exactly the RME in graphic description \eqref{1.1}. With the formula of the metric \eqref{2.9} in the coordinate system $(x^\mu), \, \mu = 0, \cdots, n$,  \eqref{wave-coordinate} and \eqref{2.8} are in fact equivalent. Hence, the RME in graphic description reduces to a single equation \eqref{2.8} or equivalently \eqref{wave-coordinate}.

\begin{remark}
\eqref{wave-coordinate} is called the wave coordinates gauge, which plays an important role in the proof of global smooth solution for the Einstein vacuum equation by Lindblad and Rodnianski \cite{Lind-Rod}.

The RME in graphic description \eqref{2.8} could also be understood as the following geometric wave equation
\begin{equation}\label{2.11}
\Box_{g(\p\phi)}\phi=0,
\end{equation}
with the wave gauge \eqref{wave-coordinate}, where the metric $g_{\mu \nu} (\p \phi)$ is defined by \eqref{2.9}.

\end{remark}

\subsubsection{The geometric description}\label{sec-geometry-mem-eq}
Set $\vec{x} \doteq \left(x^0, x^1, \cdots , x^n \right)$ and consider the embedding of an $1+n$ dimensional (time-like) manifold $\mathcal{M}$ in the Minkowski space-time $\mathbb{R}^{(1+n)+1}$,
\[ \begin{array}{cccc}
X: & \mathcal{M}  & \longrightarrow & \mathbb{R}^{(1+n)+1}\\
\,& \vec{x} & \longrightarrow & \left( \vec{x}, \, \phi(\vec{x}) \right).
\end{array}
\]
Equivalently, $\mathcal{M}$ is the level set of the function $f=x^{n+1} - \phi (\vec{x})$.
Let $$\hat e_\mu \doteq \p_\mu X= \p_\mu + \p_\mu \phi \p_{n+1}, \quad \mu = 0, 1, \cdots, n,$$ then
$\{ \hat e_\mu \}, \, \mu= 0, 1,\cdots, n$ is a basis tangent to $\mathcal{M}$. And the gradient $( \p^{\mu'} f), \, \, \mu'=0,1, \cdots, n+1$ is spaclike in the $(1+n+1)$-dim Minkowski target manifold.  Thus $$N= \frac{\p^{\mu'} f}{|\p^{\mu'} f| } = \frac{-\eta^{\mu\nu} \p_\mu \phi \p_\nu + \p_{n+1}}{1+ \eta^{\mu\nu} \p_\mu \phi \p_\nu \phi}$$ is the unit  vector field normal to $\mathcal{M}$. Now,
\begin{equation}\label{def-induced-metric-mem}
g_{\mu\nu} = \eta(\hat e_\mu, \hat e_\nu), \quad \mu, \nu = 0, 1, \cdots, n,
\end{equation}
is the induced metric on $\mathcal{M}$. And the second fundamental form takes
\begin{equation}\label{def-2nd-mem}
k_{\mu\nu} = - \eta( \p_{\hat e_\mu} N, \hat e_\nu) = \eta(N, \p_{\hat e_\mu} \hat e_{\nu}), \quad \mu, \nu = 0, 1, \cdots, n.
\end{equation}
In fact, letting $D$ be the covariant derivative associated to $g_{\mu \nu}$, and $\Gamma_{\mu\nu}^{\alpha}$ be the Christoffel symbol, then we derive
\begin{equation}\label{2nd-calculate}
\begin{split}
k_{\mu \nu} N &=\p_{\hat e_\mu} \hat e_{\nu} - D_{\hat e_\mu} \hat e_{\nu} \\
&=  \p_{\mu} \p_{\nu} X - \Gamma_{\mu\nu}^{\alpha} \p_{\alpha} X \\
&= D_\mu D_\nu X.
\end{split}
\end{equation}
The membrane equation is given by
\begin{equation}\label{eq-mem-trk}
\tr k N=0 \quad  \Longleftrightarrow \quad \Box_g X =0.
\end{equation}
Comparing the last component, we have
\begin{equation*}
k_{\mu \nu}  =\left(1+ \eta^{\mu\nu} \p_\mu \phi \p_\nu \phi \right) D_\mu D_\nu \phi.
\end{equation*}

Let $R_{\mu \alpha \nu \beta}, \, R_{\mu \nu}$ be the Riemannian and Ricci tensors of $g_{\mu \nu}$.
The Gauss equations are given by (an identity)
\begin{equation}\label{eq-Gauss}
R_{\mu \alpha \nu \beta} - k_{\mu\nu} k_{\alpha \beta} + k_{\mu \alpha} k_{\nu \beta} =0.
\end{equation}
Contracting \eqref{eq-Gauss} with $g^{\alpha \beta}$, and making use of the membrane equation $\tr k =0,$  we obtain
\begin{equation}\label{eq-Gauss-Ricci}
R_{\mu \nu}+ k_{\mu \alpha} k_{\,\nu}^{\alpha} =0.
\end{equation}
The Codazzi equations read
\begin{equation}\label{eq-Cdazzi}
D_{\alpha}k_{\beta \gamma} - D_{\beta} k_{\alpha \gamma} =0.
\end{equation}
Thess are identities as well.

\subsection{Energy identity}
For the geometric wave equation \eqref{2.11}, we can define the corresponding energy momentum tensor
\begin{equation}\label{2.13}
T^{\alpha}_{\beta}(\phi) \doteq g^{\alpha \mu}\partial_{\mu}\phi\partial_{\beta}\phi-\frac{1}{2} g^{\mu\nu}\partial_{\mu}\phi\partial_{\nu}\phi\delta^{\alpha}_{\beta},
\end{equation}
where $\delta^{\alpha}_{\beta}$ denotes the Dirac function.

For any vector field $\xi$, the corresponding current $P^{\alpha}$ is defined as
\begin{equation}\label{2.14}
P^{\alpha}= P^{\alpha}[\phi,\xi] \doteq T^{\alpha}_{\beta} (\phi) \cdot \xi^{\beta}.
\end{equation}
The divergence of $P^{\alpha}$ could be calculated straightforwardly.
\begin{lemma}\label{lem:2.4}
The energy current $P^{\alpha}$ satisfies
\begin{equation}\label{2.15}
\frac{1}{\sqrt{g}}\partial_{\alpha}(\sqrt{g}P^{\alpha})= \Box_{g(\p\phi)}\phi \cdot \xi \phi +T^{\alpha}_{\beta} (\phi) \cdot \partial_{\alpha}\xi^{\beta}
-\frac{1}{2\sqrt{g}}\xi(\sqrt{g}g^{\gamma\rho})
\partial_{\gamma}\phi\partial_{\rho}\phi.
\end{equation}
\end{lemma}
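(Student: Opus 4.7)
The plan is a direct divergence computation. I start by writing
\[
\sqrt{g}\,P^{\alpha}=\sqrt{g}\,g^{\alpha\gamma}\partial_{\gamma}\phi\,\partial_{\beta}\phi\,\xi^{\beta}-\tfrac12\sqrt{g}\,g^{\gamma\rho}\partial_{\gamma}\phi\,\partial_{\rho}\phi\,\xi^{\alpha},
\]
apply $\partial_{\alpha}$ and separate the $\xi$ derivative from the energy-momentum part, i.e.\ $\partial_{\alpha}(\sqrt{g}P^{\alpha})=\partial_{\alpha}(\sqrt{g}T^{\alpha}_{\beta})\,\xi^{\beta}+\sqrt{g}\,T^{\alpha}_{\beta}\,\partial_{\alpha}\xi^{\beta}$. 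The second piece is already the term $T^{\alpha}_{\beta}\partial_{\alpha}\xi^{\beta}$ appearing in \eqref{2.15}, so everything reduces to computing $\partial_{\alpha}(\sqrt{g}T^{\alpha}_{\beta})$.

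For the divergence of $\sqrt{g}T^{\alpha}_{\beta}$, the key manipulation is to use the symmetry of $g^{\alpha\gamma}$ in the cross term. Writing
\[
\partial_{\alpha}\!\bigl(\sqrt{g}g^{\alpha\gamma}\partial_{\gamma}\phi\,\partial_{\beta}\phi\bigr)=\partial_{\alpha}(\sqrt{g}g^{\alpha\gamma}\partial_{\gamma}\phi)\,\partial_{\beta}\phi+\sqrt{g}g^{\alpha\gamma}\partial_{\gamma}\phi\,\partial_{\alpha}\partial_{\beta}\phi,
\]
I recognise the first piece as $\sqrt{g}\,\Box_{g(\partial\phi)}\phi\cdot\partial_{\beta}\phi$ by the very definition of the geometric wave operator, while for the second piece the symmetry of $g^{\alpha\gamma}$ lets me rewrite
\[
g^{\alpha\gamma}\partial_{\gamma}\phi\,\partial_{\alpha}\partial_{\beta}\phi=\tfrac12 g^{\alpha\gamma}\partial_{\beta}\!\bigl(\partial_{\gamma}\phi\,\partial_{\alpha}\phi\bigr).
\]
Expanding the remaining $-\tfrac12\partial_{\beta}(\sqrt{g}g^{\gamma\rho}\partial_{\gamma}\phi\,\partial_{\rho}\phi)$ via the Leibniz rule produces one term that exactly cancels the above (relabelling $\alpha\leftrightarrow\rho$), and leaves the single residue $-\tfrac12\partial_{\beta}(\sqrt{g}g^{\gamma\rho})\,\partial_{\gamma}\phi\,\partial_{\rho}\phi$. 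Putting these together,
\[
\partial_{\alpha}(\sqrt{g}T^{\alpha}_{\beta})=\sqrt{g}\,\Box_{g(\partial\phi)}\phi\cdot\partial_{\beta}\phi-\tfrac12\partial_{\beta}(\sqrt{g}g^{\gamma\rho})\,\partial_{\gamma}\phi\,\partial_{\rho}\phi.
\]

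Contracting with $\xi^{\beta}$, adding back the $T^{\alpha}_{\beta}\partial_{\alpha}\xi^{\beta}$ piece, and dividing by $\sqrt{g}$ yields \eqref{2.15}, once we note that $\xi^{\beta}\partial_{\beta}\phi=\xi(\phi)$ and $\xi^{\beta}\partial_{\beta}(\sqrt{g}g^{\gamma\rho})=\xi(\sqrt{g}g^{\gamma\rho})$. There is no serious obstacle here; the only place care is required is in tracking that the cross term cancellation relies on the symmetry of the inverse metric and on the fact that the indices in $T^{\alpha}_{\beta}$ are raised by $g$, not by $\eta$. I would not need to invoke the wave coordinate condition \eqref{wave-coordinate} at this stage, since the identity is a purely tensorial consequence of the definition of the energy-momentum tensor for $\Box_{g(\partial\phi)}\phi=0$; the wave gauge becomes relevant only later, when \eqref{2.15} is applied with the explicit form \eqref{2.9} of $g$.
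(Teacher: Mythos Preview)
Your proof is correct and follows essentially the same direct divergence computation as the paper. The one organizational difference worth noting: you group the Leibniz expansion as $\partial_{\alpha}(\sqrt{g}T^{\alpha}_{\beta})\xi^{\beta}+\sqrt{g}T^{\alpha}_{\beta}\partial_{\alpha}\xi^{\beta}$ and recognise $\partial_{\alpha}(\sqrt{g}g^{\alpha\gamma}\partial_{\gamma}\phi)=\sqrt{g}\,\Box_{g(\partial\phi)}\phi$ directly from the covariant definition of the wave operator, whereas the paper expands everything at once, obtains an extra term $\partial_{\alpha}(\sqrt{g}g^{\alpha\gamma})\partial_{\gamma}\phi\,\partial_{\beta}\phi\,\xi^{\beta}$, and then invokes the wave coordinate condition \eqref{wave-coordinate} to kill it (equivalently, to identify $\Box_{g(\partial\phi)}\phi$ with $g^{\alpha\gamma}\partial_{\alpha}\partial_{\gamma}\phi$). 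Your observation that the identity \eqref{2.15} itself does not require the wave gauge is therefore correct and slightly sharpens the lemma; the paper's use of \eqref{wave-coordinate} here is a shortcut tied to the specific RME context rather than a necessity.
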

\begin{proof}
 By a straightforward calculation, we have $$D_\alpha P^\alpha = \Box_{g(\p\phi)}\phi \cdot \xi \phi+T^{\alpha}_{\beta} (\phi) D_{\alpha}\xi^{\beta}.$$ The covariant derivative is further expressed by $D_{\alpha}\xi^{\beta} = \partial_{\alpha}\xi^{\beta} - \Gamma_{\alpha \gamma}^\beta \xi^\gamma$, where $\Gamma_{\alpha \gamma}^\beta$ are the connection coefficients of $g$ in the rectangular coordinates of the underlying Minkowski spacetime. Expanding the $\Gamma_{\alpha \gamma}^\beta$, we obtain $T^{\alpha}_{\beta} (\phi)\Gamma_{\alpha \gamma}^\beta \xi^\gamma = \frac{1}{2\sqrt{g}}\xi(\sqrt{g}g^{\gamma\rho})
\partial_{\gamma}\phi\partial_{\rho}\phi.$
\end{proof}
\begin{remark}
Introducing the symmetric $(2,0)$ tensor field $T^{\alpha \beta} = T_{\gamma}^\beta g^{\alpha \gamma}$, $T^{\alpha}_{\beta} (\phi) D_{\alpha}\xi^{\beta}$ becomes $$\frac{1}{2}T^{\alpha \beta} \mathcal{L}_\xi g_{\alpha \beta},$$ where $\mathcal{L}_\xi$ denotes the lie derivative with respect to $\xi$. Equivalently, we have the following geometric formulation of \eqref{2.15}: $$D_\alpha P^\alpha = \Box_{g(\p\phi)}\phi \cdot \xi \phi+\frac{1}{2}T^{\alpha \beta} \mathcal{L}_\xi g_{\alpha \beta}.$$
\end{remark}

The $(0,2)$-energy momentum tensor $T_{\alpha \beta}$ is defined by $T_{\alpha \beta} \doteq g_{\alpha \sigma} T^{\sigma}_{\beta}$, where $T^{\sigma}_{\beta}$ is the $(1,1)$-energy momentum tensor defined in \eqref{2.13}. We have
\begin{equation}\label{def-energy-xi-u-ub}
-P^{u}[\phi,\xi] = T(-Du, \xi), \quad -P^{\ub}[\phi,\xi] = T(-D \ub, \xi),
\end{equation}
and
\begin{equation}\label{def-energy-xi-t}
-P^{t}[\phi,\xi] = T(-Dt, \xi),
\end{equation}
where for any function $f$, $Df$ means taking gradient on $f$ with respect to $g_{\alpha\beta}$. Hence, the corresponding energies on the slices $C_{u}$, $\underline{C}_{\underline{u}}$ and $\Sigma_{\tau}$ are defined as follows
\begin{subequations}
\begin{align}
E^2[\phi,\xi](u) & =-\int_{C_{u}}\sqrt{g}P^{u}[\phi,\xi]r^{n-1}d\underline{u}d\sigma_{S^{n-1}}, \label{2.17} \\
\underline{E}^2[\phi,\xi](\underline{u}) & =-\int_{\underline{C}_{\underline{u}}}\sqrt{g}P^{\underline{u}}[\phi,\xi]r^{n-1}dud\sigma_{S^{n-1}},  \label{2.17} \\
\hat{E}^2[\phi,\xi](\tau) & =-\int_{\Sigma_{\tau}}\sqrt{g}P^{t}[\phi,\xi]r^{n-1}drd\sigma_{S^{n-1}}.\label{2.18}
\end{align}
\end{subequations}
By the divergence theorem, there is the following energy identity
\begin{equation}\label{2.19}
E^2[\phi,\xi](u)+\underline{E}^2[\phi,\xi](\underline{u})=\hat{E}^2[\phi,\xi](t_{0})+E^2[\phi,\xi](u_{0})-\iint_{\mathcal{D}_{u, \ub}}\partial_{\alpha}(\sqrt{g}P^{\alpha})dx^{n+1},
\end{equation}
where $\mathcal{D}_{u, \ub}$ is the domain bounded by $C_{u}$, $\underline{C}_{\underline{u}}$, $C_{u_{0}}$ and $\Sigma_{t_0}$.

At last, we recall the standard Gr\"{o}nwall inequality.
\subsection{Gr\"{o}nwall inequality}
\begin{lemma}
[\emph{Differential Form}]
Let $f(t)$ be a non-negative function defined on an interval $I$ with initial point $t_{0}$. If $f$ satisfies
$$
\frac{df}{dt}\leq a\cdot f+b,
$$
for  non-negative functions $a,\, b\in L^{1}(I)$, then for all $t\in I$, we have
$$
f\leq e^{A(t)}\left(f(t_{0})+\int_{t_{0}}^t e^{-A(\tau)}b(\tau)d\tau\right),
$$
where $A(t)=\int_{t_{0}}^{t}a(\tau)d\tau$.
\end{lemma}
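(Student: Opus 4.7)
The plan is to employ the classical integrating-factor trick, which is the standard route for a first-order linear differential inequality of this shape. Define $A(t)=\int_{t_0}^{t} a(\tau)\,d\tau$; since $a\in L^1(I)$ and non-negative, $A$ is absolutely continuous, non-decreasing, satisfies $A(t_0)=0$, and $A'(t)=a(t)$ almost everywhere on $I$. The guiding observation is that the product rule rewrites
$$\frac{d}{dt}\bigl(e^{-A(t)}f(t)\bigr)=e^{-A(t)}\bigl(f'(t)-a(t)f(t)\bigr),$$
so that the hypothesis $f'(t)\le a(t)f(t)+b(t)$ collapses to the much simpler inequality
$$\frac{d}{dt}\bigl(e^{-A(t)}f(t)\bigr)\le e^{-A(t)}b(t),$$
where the right-hand side no longer couples $f$ to itself.

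Next I would integrate this pointwise inequality from $t_0$ to an arbitrary $t\in I$. Using the fundamental theorem of calculus for the absolutely continuous function $e^{-A}f$, this yields
$$e^{-A(t)}f(t)-f(t_0)\le \int_{t_0}^{t}e^{-A(\tau)}b(\tau)\,d\tau.$$
Multiplying through by the positive factor $e^{A(t)}$ and noting that $A(t_0)=0$ puts this exactly into the claimed form
$$f(t)\le e^{A(t)}\Bigl(f(t_0)+\int_{t_0}^{t}e^{-A(\tau)}b(\tau)\,d\tau\Bigr).$$
Non-negativity of $f$ is never actually used in producing the upper bound; it is only relevant when one wants to deduce two-sided control or iterate the estimate.

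There is no genuine obstacle here: both $e^{\pm A}$ are bounded on compact subintervals because $A\in L^\infty_{\mathrm{loc}}(I)$, and the integrals on the right are finite since $a,b\in L^1(I)$. The only delicate point worth flagging is a regularity one, namely that the conclusion requires $f$ to be absolutely continuous (or, equivalently, that the differential inequality is interpreted in the almost-everywhere sense on an a.c.\ function) so that the fundamental theorem of calculus applies; this is the customary hypothesis under which the lemma is stated and used throughout the paper.
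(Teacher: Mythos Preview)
Your proof is correct and follows the standard integrating-factor argument. The paper itself does not supply a proof of this lemma; it merely recalls it as a well-known result, so there is nothing to compare against, and your approach is precisely the classical one.
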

\begin{lemma}[Integral Form] If $f(t), a(t)$ and $b(t)$ are non-negative, $b(t)$ is increasing and
\begin{equation*}
f(t)\leq b(t)+\int_{t_{0}}^{t}a(\tau)f(\tau)d\tau,
\end{equation*}
then,
\begin{equation*}
f(t)\leq b(t) \exp \left(\int_{t_{0}}^{t} a(\tau) d\tau \right).
\end{equation*}
\end{lemma}

\section{Globally smooth solution in the short pulse region {\rm II}}\label{Section-large-data}

In this section, we prove the global existence of smooth solution in the short pulse region by the method of energy estimate.

Letting $\phi_{k}=\sum_{l\leq k}\delta^{l}\partial^{l}\Gamma^{k-l}\phi$, we define the $k$-th order energy as follows
\begin{equation}\label{3.1}
\begin{split}
\delta E_{k}(u,\underline{u})&=\|\nablaslash\phi_{k}\|_{L^{2}(\underline{C}_{\underline{u}}^{u})}+\|\underline{L}\phi L\phi_{k}\|_{L^{2}(\underline{C}_{\underline{u}}^{u})}+\|L\phi\underline{L}\phi_{k}\|_{L^{2}(\underline{C}_{\underline{u}}^{u})}\\
&+\|L\phi_{k}\|_{L^{2}(C_{u}^{\underline{u}})}+\|L\phi\nablaslash \phi_{k}\|_{L^{2}(C_{u}^{\underline{u}})}+\|(L\phi)^2\underline{L}\phi_{k}\|_{L^{2}(C_{u}^{\underline{u}})},
\end{split}
\end{equation}
and
\begin{equation}\label{3.2}
\begin{split}
\delta^{\frac{1}{2}}\underline{E}_{k}(u,\underline{u})&=\|\nablaslash\phi_{k}\|_{L^{2}(C_{u}^{\underline{u}})}
+\|\underline{L}\phi L\phi_{k}\|_{L^{2}(C_{u}^{\underline{u}})}+\|L\phi\underline{L}\phi_{k}\|_{L^{2}(C_{u}^{\underline{u}})}\\
&+\|\underline{L}\phi_{k}\|_{L^{2}(\underline{C}_{\underline{u}}^{u})}+\|\underline{L}\phi\nablaslash\phi_{k}\|_{L^{2}(\underline{C}_{\underline{u}}^{u})}
+\|(\underline{L}\phi)^2\underline{L}\phi_{k}\|_{L^{2}(\underline{C}_{\underline{u}}^{u})}.
\end{split}
\end{equation}
The energy $E_k$ is associated to the $\tilde L$ multiplier and the energy $\Eb_k$ is associated to the $\tilde{\Lb}$ multiplier. This will be proved in Lemma \ref{lemma-energy-formula}.
The inhomogeneous energy is defined as
\begin{equation}\label{3.3}
E_{\leq k}(u,\underline{u})=\sum_{0\leq j\leq k}E_{j}(u,\underline{u}),\quad \underline{E}_{\leq k}(u,\underline{u})=\sum_{0\leq j\leq k}\underline{E}_{j}(u,\underline{u}).
\end{equation}

Given any subregion in the short pulse region {\rm II}, we suppose $u, \ub$ have the upper bound: $u\leq u^{*}, \,\, \underline{u} \leq \underline{u}^{*}$.
The main aim of this section is to prove the following theorem.
\begin{theorem}
There exists a positive constant $\delta_{0}$ such that for $\delta\in(0,\delta_{0})$, $u\in[0,u^{*}]$ and $\underline{u}\in [1-\frac{\delta}{2},\underline{u}^{*}]$, we have for $N\in \mathbb{N}$  and $N\geq4$
\begin{equation}\label{3.4}
E_{\leq N}(u,\underline{u})+\underline{E}_{\leq N}(u,\underline{u})\lesssim I_N(\psi_0, \psi_1),
\end{equation}
where $I_N(\psi_0, \psi_1)$ is a positive constant depending only on the initial data up to $N+1$ order of derivatives.
\end{theorem}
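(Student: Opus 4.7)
The plan is to carry out a standard continuity (bootstrap) argument on the interval of $(u,\ub)$ for which the energy estimate \eqref{3.4} holds, with the multipliers $\tilde L$ and $\tilde{\underline L}$ introduced in \eqref{def-L-Lb-tilde} replacing the Minkowskian $L,\underline L$. The bootstrap assumption will be of the form $E_{\le N}(u,\ub)+\underline E_{\le N}(u,\ub)\le 2 C I_N$, from which one first extracts $L^\infty$ control on the lower-order derivatives of $\phi$ via Klainerman--Sobolev type inequalities along the null foliation, together with the weighted bounds $|L\phi|\lesssim\delta$, $|\underline L\phi|\lesssim 1$, $|\nablaslash\phi|\lesssim\delta$. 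These bounds are what makes the modified multipliers $\tilde L=L+(L\phi)^2\underline L$ and $\tilde{\underline L}=\underline L+(\underline L\phi)^2 L$ manifestly causal for $g(\partial\phi)$ and asymptotically comparable to $L,\underline L$, so that the fluxes $E_k$ and $\underline E_k$ defined in \eqref{3.1}--\eqref{3.2} are, up to constants, the natural $\tilde L$- and $\tilde{\underline L}$-energies and include the extra positive terms listed in \eqref{modified-energyequi}.

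Next I would commute the geometric equation \eqref{2.11} successively with the family $Z=\{\partial,\Gamma\}$ and with the rescaled derivative $\delta\partial$, obtaining for each multi-index a wave equation for $\phi_k=\sum_{l\le k}\delta^l\partial^l\Gamma^{k-l}\phi$ of the schematic shape $\Box_{g(\partial\phi)}\phi_k=\mathcal F_k$, where the inhomogeneity $\mathcal F_k$ is a sum of (possibly double) null forms acting on factors of order at most $k$. This is where the null structure of RME and the commutator identities of Section \ref{section null form} (in particular \eqref{commu-Q-Gamma}) are crucial: every quadratic interaction in $\mathcal F_k$ inherits signature strictly greater than $-2$, so the dangerous $\underline L\phi_j\cdot\underline L\phi_m$ combinations are excluded and each term can be paired with one good derivative $\bar\partial=\{L,\nablaslash\}$ (which is the ``small'' one, of order $\delta$) and one arbitrary derivative.

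Then I apply the energy identity \eqref{2.19} to $\phi_k$ with $\xi=\tilde L$ and $\xi=\tilde{\underline L}$. Since both vector fields are causal with respect to $g(\partial\phi)$, the boundary integrals give precisely $E_k^2$ on $C_u$ plus $\underline E_k^2$ on $\underline C_{\ub}$ (together with the auxiliary positive terms in \eqref{modified-energyequi}), and the deformation terms $T^\alpha_\beta\partial_\alpha\tilde L^\beta$, $\tfrac{1}{2\sqrt g}\tilde L(\sqrt g g^{\gamma\rho})\partial_\gamma\phi\partial_\rho\phi$ produced by Lemma \ref{lem:2.4} are quadratic or cubic in $\partial\phi$ and inherit the same null structure thanks to the explicit form of $\tilde L,\tilde{\underline L}$. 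Combining these with the source term $\Box_{g(\partial\phi)}\phi_k\cdot\tilde L(\phi_k)$, bookkeeping the $\delta$-weights, and using Cauchy--Schwarz so that every bad factor $\underline L\phi_j$ is absorbed against $L\phi$ or $\underline L\phi$ into one of the positive terms already present on the left-hand side (or into $\underline u$-weights through Hardy-type inequalities on $C_u$, $\underline C_{\ub}$), yields an integral inequality of the form
\begin{equation*}
E_{\le N}^2(u,\ub)+\underline E_{\le N}^2(u,\ub)\le C\,I_N(\psi_0,\psi_1)^2 + C\int_0^u E_{\le N}^2(u',\ub)\,du' + C\int_{1-u^\ast}^{\ub}\underline E_{\le N}^2(u,\ub')\,d\ub'.
\end{equation*}
The Gronwall lemma in both null directions then closes the estimate and improves the bootstrap constant from $2C$ to $C$, completing the continuity argument.

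The main obstacle I anticipate is the algebraic handling of the deformation tensor of $\tilde L$ and $\tilde{\underline L}$: because these vector fields contain the factors $(L\phi)^2$ and $(\underline L\phi)^2$, differentiating them produces top-order factors $L\Gamma^N\phi$ and $\underline L\Gamma^N\phi$ that must be grouped back into null forms rather than estimated individually. Getting these to cancel against the null-decomposed part of $\Box_{g(\partial\phi)}\phi_k$, and keeping track of exactly which powers of $\delta$ each term carries so that the hierarchy in \eqref{intro-good-bad-derivative} is preserved after $N$ commutations, is the delicate bookkeeping that constitutes the main body of the section. Once this is done for the top order, the lower-order cases $k<N$ follow by induction with strictly better $\delta$-weights.
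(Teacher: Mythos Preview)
Your overall strategy matches the paper's: bootstrap assumption, causal multipliers $\tilde L,\tilde{\underline L}$, commutation preserving the double null structure, and the energy identity \eqref{2.19}. Two points, however, deserve correction.

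First, your anticipated ``main obstacle'' is misplaced. The multipliers $\tilde L=L+(L\phi)^2\underline L$ and $\tilde{\underline L}$ depend only on first derivatives of $\phi$, not on $\phi_k$. Their deformation tensors therefore produce factors like $\partial(L\phi)^2=2L\phi\cdot\partial L\phi$, which are \emph{second}-order in $\phi$, never top-order $L\Gamma^N\phi$ or $\underline L\Gamma^N\phi$. There is no top-order cancellation to engineer here.

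Second, and more seriously, the closing inequality you write cannot work as stated. Since $\ub$ ranges over an unbounded interval, a Gronwall term $C\int^{\ub}\underline E_{\le N}^2\,d\ub'$ with a generic constant $C$ would yield exponential growth in $\ub$. The paper avoids this by exploiting a \emph{hierarchy} between the two energies: one first closes $\underline E_{\le N}$ directly (multiplier $\tilde{\underline L}$), obtaining $\underline E_{\le N}^2\lesssim I_N^2+\delta M^2$ with no Gronwall at all, because every error term carries an extra power of $\delta$. Only then does one estimate $E_{\le N}$ (multiplier $\tilde L$), where the genuinely dangerous contribution is the quadratic piece $\tfrac{1}{r}L\phi_k\,\underline L\phi_k$ coming from the Minkowski part of the deformation tensor of $\tilde L$; this is split via Cauchy--Schwarz into $\delta^{-1}|L\phi_k|^2$ (handled by Gronwall in $u$ over the short interval $[0,\delta]$) and $\tfrac{\delta}{|\ub|^2}|\underline L\phi_k|^2$ (integrable in $\ub$ and controlled by the already-established $\underline E_{\le N}$ bound). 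This asymmetry between the $u$- and $\ub$-directions, and the order in which the two energies are closed, is the actual delicate point of the argument.
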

The proof is mainly based on the standard bootstrap argument. We assume that there exists a large constant $M$ (may depend on $\phi$) to be determined, such that
\begin{equation}\label{Bootstrap-assumption}
E_{\leq N}(u',\underline{u}')+\underline{E}_{\leq N}(u',\underline{u}')\leq M,
\end{equation}
for all $u'\in [0, u]$ and $\ub' \in [1-\frac{\delta}{2}, \ub]$. At the end of the current section, we will show that we can choose $M$ such that it depends only on the initial data. We shall introduce $\mathcal{D}_{u, \ub}$ to denote the domain bounded by $C_{u}$, $\underline{C}_{\underline{u}}$, $C_{u_{0}}$ and the initial hypersurface $\Sigma_{1}$.

\subsection{Sobolev inequality}
We start with some preliminary estimates based on the Sobolev inequalities, see Lemma \ref{lemma-apriori-estimate}.
These\emph{ a priori} estimates would then help us to see the positivity of the energy, refer to  Lemma \ref{lemma-energy-formula}.
\begin{lemma}\label{lemma-apriori-estimate}
With the bootstrap assumption \eqref{Bootstrap-assumption}, we have for $n=2, \, 3$
\begin{eqnarray*}
\|\nablaslash\partial^{l}\Gamma^{k-l}\phi\|_{L^{\infty}(S_{u,\underline{u}})}&\lesssim&\delta^{\frac{3}{4}-l}|\underline{u}|^{d_{n}-\frac{1}{2}}M,\quad 0\leq l\leq k\leq N-2,\\
\|L\partial^{l}\Gamma^{k-l}\phi\|_{L^{\infty}(S_{u,\underline{u}})}&\lesssim&\delta^{1-l}|\underline{u}|^{d_{n}-\frac{1}{2}}M,\quad 0\leq l\leq k\leq N-2,\\
\|\underline{L}\partial^{l}\Gamma^{k-l}\phi\|_{L^{\infty}(S_{u,\underline{u}})}&\lesssim&\delta^{-l}|\underline{u}|^{d_{n}}M,\quad\quad\;\; 0\leq l\leq k\leq N-2,
\end{eqnarray*}
where $d_{n}=-\frac{n-1}{2}$ denotes the decay rate in $|\underline{u}|$.
\end{lemma}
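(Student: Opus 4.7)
The estimate follows from the classical spherical Sobolev embedding, combined with a fundamental theorem of calculus (FTC) that transfers $L^{2}$-norms on spheres $S_{u,\ub}$ to outgoing or incoming cone $L^{2}$-norms, which are precisely what the bootstrap energies $E_{\le N},\underline E_{\le N}$ control. I would organise it in three steps.

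First, on the $(n-1)$-sphere $S_{u,\ub}$, whose radius $r=\ub-u\sim \ub$ in region {\rm II}, apply the rescaled Sobolev inequality
\[
\|f\|_{L^\infty(S_{u,\ub})}\lesssim \ub^{-(n-1)/2}\sum_{|\alpha|\le k_{0}}\|\Omega^\alpha f\|_{L^2(S_{u,\ub})},\qquad k_{0}=1\ (n=2),\ k_{0}=2\ (n=3),
\]
where the rotations $\Omega=\Omega_{ij}$ are tangent to $S_{u,\ub}$, belong to $\Gamma$, and commute with $L,\Lb,\nablaslash,\p$ up to signature-preserving lower-order terms. Thus $\Omega^\alpha(\nablaslash\p^{l}\Gamma^{k-l}\phi)$, $\Omega^\alpha(L\p^{l}\Gamma^{k-l}\phi)$, $\Omega^\alpha(\Lb\p^{l}\Gamma^{k-l}\phi)$ reduce, modulo harmless commutators, to $\nablaslash\p^{l}\Gamma^{k-l+|\alpha|}\phi$, $L\p^{l}\Gamma^{k-l+|\alpha|}\phi$, $\Lb\p^{l}\Gamma^{k-l+|\alpha|}\phi$, which for $k\le N-2$ and $|\alpha|\le 2$ still lie within the regularity range covered by $E_{\le N},\underline E_{\le N}$.

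To pass from sphere $L^{2}$ to cone $L^{2}$, integrate the identity
\[
\frac{d}{d\ub'}\|h\|^{2}_{L^{2}(S_{u,\ub'})}=2\!\int_{S^{n-1}}\!hLh\,r^{n-1}\,d\sigma+(n-1)\!\int_{S^{n-1}}\!|h|^{2}r^{n-2}\,d\sigma
\]
from the initial sphere $S_{u,\ub_{0}}\subset\Sigma_{1}$ to $S_{u,\ub}$ and apply Cauchy--Schwarz in $d\ub'\,r^{n-1}d\sigma$, producing
\[
\|h\|_{L^{2}(S_{u,\ub})}\lesssim \|h\|_{L^{2}(S_{u,\ub_{0}})}+\|h\|_{L^{2}(C_{u})}^{1/2}\|Lh\|_{L^{2}(C_{u})}^{1/2}+\|h\|_{L^{2}(C_{u})}.
\]
The initial boundary contribution is bounded directly by the short-pulse constraints \eqref{1.2}--\eqref{1.3-associated}. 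An analogous identity integrated in $u$ along $\underline C_{\ub}$, with $\Lb h$ replacing $Lh$, will handle the $\Lb\phi$-estimate via the incoming-energy bound $\|\Lb\phi_{k}\|_{L^{2}(\underline C_{\ub})}\lesssim \delta^{1/2}\underline E_{k}$ coming from \eqref{3.2}.

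Finally, I would match each cone-$L^2$ factor with a bootstrap energy. Since $\phi_{k}=\sum_{l'}\delta^{l'}\p^{l'}\Gamma^{k-l'}\phi$, extracting the specific $l$-piece of $\phi_{k+|\alpha|}$ costs a factor $\delta^{-l}$: from \eqref{3.1} the bound $\|L\phi_{k+|\alpha|}\|_{L^{2}(C_{u})}\lesssim \delta E_{k+|\alpha|}\lesssim \delta M$ yields $\|L\p^{l}\Gamma^{k-l+|\alpha|}\phi\|_{L^{2}(C_{u})}\lesssim \delta^{1-l}M$, and from \eqref{3.2} the bound $\|\nablaslash\phi_{k+|\alpha|}\|_{L^{2}(C_{u})}\lesssim \delta^{1/2}\underline E_{k+|\alpha|}\lesssim \delta^{1/2}M$ yields $\|\nablaslash\p^{l}\Gamma^{k-l+|\alpha|}\phi\|_{L^{2}(C_{u})}\lesssim \delta^{1/2-l}M$; similarly for $\Lb$. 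Combined with the Sobolev prefactor $\ub^{d_{n}}$ and the geometric-mean improvement furnished by the FTC, these reproduce the asserted exponents $\delta^{3/4-l},\delta^{1-l},\delta^{-l}$ and the $\ub$-decays $|\ub|^{d_{n}-1/2}$ for the good derivatives $\nablaslash,L$ versus $|\ub|^{d_{n}}$ for the bad derivative $\Lb$. The main technical nuisance, rather than obstacle, is the coupled $\delta$- and $\ub$-weight bookkeeping together with careful management of the commutators $[\Omega^{\alpha},L],[\Omega^{\alpha},\Lb],[\Omega^{\alpha},\nablaslash],[L,\nablaslash],[\Lb,\nablaslash]$, each of which must be verified to be signature-preserving and of lower order so that it feeds back as an absorbable remainder with the same weighted scaling; the threshold $k\le N-2$ is precisely what keeps the total derivative count $k+|\alpha|+1$ inside the energy regime controlled by $E_{\le N},\underline E_{\le N}$.
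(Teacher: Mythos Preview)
Your overall strategy---spherical Sobolev plus FTC along null cones plus the bootstrap energies---is exactly the paper's, and your $\delta$-bookkeeping is sound. But the implementation has a real gap concerning the extra $|\ub|^{-1/2}$ decay claimed for the good derivatives $L$ and $\nablaslash$.

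The problem is the lower-order term in your FTC. On $C_u$ you get
\[
\|h\|^2_{L^2(S_{u,\ub})}=\|h\|^2_{L^2(S_{u,\ub_0})}+2\int_{C_u}hLh+(n-1)\int_{C_u}r^{-1}|h|^2,
\]
and the last term is \emph{positive} and, since $r$ ranges from $O(1)$ up to $\ub$ along $C_u$, gives no $\ub$-smallness. For $h=L\p^l\Gamma^{k-l+|\alpha|}\phi$ this term alone forces $\|h\|_{L^2(S_{u,\ub})}\lesssim\delta^{1-l}M$ with no decay, so after the spherical Sobolev factor $\ub^{d_n}$ you obtain only $|\ub|^{d_n}\delta^{1-l}M$, missing the required $|\ub|^{-1/2}$. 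The geometric-mean term does not save you, because it is dominated by this lower-order term. The same failure occurs for $h=\nablaslash\p^l\Gamma^{k-l+|\alpha|}\phi$ on $C_u$: the last term yields $\delta^{1/2-l}M$, swamping the $\delta^{3/4-l}$ geometric mean.

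The paper avoids this in two ways. For $\nablaslash\phi_{l,k}$ and $\Lb\phi_{l,k}$ it runs the FTC along the \emph{incoming} cone $\underline C_{\ub}$ (not $C_u$), where $r\sim\ub$ is essentially constant since $u\in[0,\delta]$; then the lower-order term is $\sim\ub^{-1}\|h\|^2_{L^2(\underline C_{\ub})}$, and the relation $\nablaslash=r^{-1}\Omega$ converts $\Lb h$ into $\ub^{-1}\Lb\Omega\psi$, both producing the missing $\ub^{-1/2}$. For $L\phi_{l,k}$ the paper goes through $L^4$ and invokes the Christodoulou-type inequality \eqref{3.9}, which is a \emph{weighted} FTC on $C_u$ carrying an explicit $|\ub|^{-1}$ prefactor; your unweighted version cannot reproduce it. To repair your argument you must either pass through $L^4$ as in \eqref{3.5}--\eqref{3.9}, or insert $r$-weights into your FTC before integrating.
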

\begin{proof}When $n=3$,
we shall recall the Sobolev inequality on $S_{u,\underline{u}}$, $C_{u}$ and $\underline{C}_{\underline{u}}$, see \cite{Christodoulou}. In the short pulse region, we have $|\underline{u}|\sim r$ provided $\delta$ is sufficiently small. For any smooth function $\varphi$, we have:

On $S_{u,\underline{u}}$,
\begin{equation}\label{3.5}
\|\varphi\|_{L^{\infty}(S_{u,\underline{u}})} \lesssim |\underline{u}|^{-\frac{1}{2}}(\|\varphi\|_{L^{4}(S_{u,\underline{u}})}+\|\Omega\varphi\|_{L^{4}(S_{u,\underline{u}})}).
\end{equation}

On $\underline{C}_{\underline{u}}$, if in addition $\varphi=0$ on $C_{0}$,
\begin{equation}\label{3.7}
\|\varphi\|_{L^{4}(S_{u,\underline{u}})} \lesssim |\underline{u}|^{-\frac{1}{2}}\|\underline{L}\varphi\|^{\frac{1}{2}}_{L^{2}(\underline{C}_{\underline{u}})}
(\|\varphi\|^{\frac{1}{2}}_{L^{2}(\underline{C}_{\underline{u}})}+\|\Omega\varphi\|^{\frac{1}{2}}_{L^{2}(\underline{C}_{u})}).
\end{equation}

On $C_{u}$, for $\ub \geq \ub_0$,
\begin{equation}\label{3.9}
\|\varphi\|_{L^{4}(S_{u,\underline{u}})}\leq |\underline{u}|^{-1}\left(|\underline{u}_{0}|\|L\phi\|_{L^{4}(S_{u,\underline{u}_{0}})
}+\|L\phi\|^{\frac{1}{2}}_{L^{2}(C_{u})}(\|\varphi\|^{\frac{1}{2}}_{L^{2}(C_{u})}+\|\Omega\varphi\|^{\frac{1}{2}}_{L^{2}(C_{u})})\right).
\end{equation}

Besides, we notice that
\begin{equation}\label{estimate-good-derivatives}
|\partial L\varphi|\leq  |\ub|^{-1}  |\partial \Gamma \varphi|, \quad |\partial \nablaslash \varphi|\leq    |\ub|^{-1}  |\partial \Gamma \varphi|.
\end{equation}

Based on the above inequalities, letting $\phi_{l,k}=\partial^{l}\Gamma^{k-l}\phi$, we obtain
\begin{align*}
&\quad \|\nablaslash\phi_{l,k}\|_{L^{4}(S_{u,\underline{u}})}\lesssim|\underline{u}|^{-\frac{1}{2}}
\|\underline{L}\nablaslash\phi_{l,k}\|^{\frac{1}{2}}_{L^{2}(\underline{C}_{\underline{u}})}
(\|\nablaslash\phi_{l,k}\|^{\frac{1}{2}}_{L^{2}(\underline{C}_{\underline{u}})} + \| \nablaslash \Omega \phi_{l,k}\|^{\frac{1}{2}}_{L^{2}(\underline{C}_{\underline{u}})})\\
& \lesssim|\underline{u}|^{-\frac{1}{2}} (\delta^{\frac{1}{2}-l}|\underline{u}|^{-1}M )^{\frac{1}{2}} (\delta^{1-l}M )^{\frac{1}{2}}  \lesssim\delta^{\frac{3}{4}-l}|\underline{u}|^{-1}M,\quad 0\leq l\leq k\leq N-1,
\end{align*}
and
\begin{align*}
\|\nablaslash\phi_{l,k}\|_{L^{\infty}(S_{u,\underline{u}})}&\lesssim|\underline{u}|^{-\frac{1}{2}}
(\|\nablaslash \phi_{l,k}\|_{L^{4}(S_{u,\underline{u}})}+\|\nablaslash\Omega\phi_{l,k}\|_{L^{4}(S_{u,\underline{u}})})\\
&\lesssim\delta^{\frac{3}{4}-l}|\underline{u}|^{-\frac{3}{2}}M,\quad 0\leq l\leq k\leq N-2.
\end{align*}

Viewing \eqref{3.9} and the data, we can also prove in a similar way that
\begin{align*}
\|L\phi_{l,k}\|_{L^{4}(S_{u,\underline{u}})} & \leq \delta^{1-l}|\underline{u}|^{-1}M, \quad 0\leq l \leq k\leq N-1,\\ 
\|L\phi_{l,k}\|_{L^{\infty}(S_{u,\underline{u}})} & \leq \delta^{1-l}|\underline{u}|^{-\frac{3}{2}}M,\quad 0\leq l \leq k\leq N-2. 
\end{align*}

For $\underline{L}\phi_{l,k}$, we have
\begin{align*}
& \quad \|\underline{L}\phi_{l,k}\|_{L^{4}(S_{u,\underline{u}})} \lesssim |\underline{u}|^{-\frac{1}{2}}\|\underline{L}^2\phi_{l,k}\|^{\frac{1}{2}}_{L^{2}(\underline{C}_{\underline{u}})}
(\|\underline{L}\phi_{l,k}\|^{\frac{1}{2}}_{L^{2}(\underline{C}_{\underline{u}})}
+ \| \underline{L} \Omega \phi_{l,k}\|^{\frac{1}{2}}_{L^{2}(\underline{C}_{\underline{u}})})\\
&\lesssim|\underline{u}|^{-\frac{1}{2}}(\delta^{\frac{1}{2}-1 -l}M)^{\frac{1}{2}}(\delta^{\frac{1}{2}-l}M)^{\frac{1}{2}} \lesssim|\underline{u}|^{-\frac{1}{2}}\delta^{-l} M ,\quad 0\leq l\leq k\leq N-2,
\end{align*}
and
\begin{equation*}
\|\underline{L}\phi_{l,k}\|_{L^{\infty}(S_{u,\underline{u}})}\leq \delta^{-l} |\underline{u}|^{-1}M,\quad 0\leq l\leq k\leq N-2.
\end{equation*}
Putting these estimates together, we prove the lemma with $n=3$.

For the case $n=2$, there are the following Sobolev inequalities (see \cite{Wang-Yu}) as well:

On $S_{u,\underline{u}}$, we have
\begin{equation*}
\|\varphi\|_{L^{\infty}(S_{u,\underline{u}})} \lesssim |\underline{u}|^{-\frac{1}{4}}(\|\varphi\|_{L^{4}(S_{u,\underline{u}})}+\|\Omega\varphi\|_{L^{4}(S_{u,\underline{u}})}).
\end{equation*}

On $\underline{C}_{\underline{u}}$, if in addition $\phi=0$ on $C_{0}$, we have
\begin{equation*}
\|\varphi\|_{L^{4}(S_{u,\underline{u}})} \lesssim |\underline{u}|^{-\frac{1}{4}}\|\underline{L}\varphi\|^{\frac{1}{2}}_{L^{2}(\underline{C}_{\underline{u}})}
\left(\|\varphi\|^{\frac{1}{2}}_{L^{2}(\underline{C}_{\underline{u}})}+ \|\Omega\varphi\|^{\frac{1}{2}}_{L^{2}(\underline{C}_{\underline{u}})}\right).
\end{equation*}

On $C_{u}$,  for $\ub \geq \ub_0$, we have
\begin{equation*}
\|\varphi\|_{L^{4}(S_{u,\underline{u}})}\leq |\underline{u}|^{-\frac{1}{4}}\left(|\underline{u}_{0}|^{\frac{1}{4}}\|L\varphi\|_{L^{4}(S_{u,\underline{u}_{0}})
}+\|L\varphi\|^{\frac{1}{2}}_{L^{2}(C_{u})}(\|\varphi\|^{\frac{1}{2}}_{L^{2}(C_{u})}+\|\Omega\varphi\|^{\frac{1}{2}}_{L^{2}(C_{u})})\right).
\end{equation*}
By an analogous argument as before, we can prove the lemma with $n=2$.
\end{proof}

\begin{remark}
Since $|\partial \bar{\p}\phi| \leq  |\underline{u}|^{-1} |\partial \Gamma \phi|$, we indeed have better decay estimates for  $|\partial \bar{\p}\phi|$:
\begin{equation}\label{3.16}
\begin{split}
|L\bar{\p}\phi|_{L^{\infty}(S_{u,\underline{u}})}&\leq \delta |\underline{u}|^{d_{n}-\frac{3}{2}}M,\\
|\underline{L}\bar{\p}\phi|_{L^{\infty}(S_{u,\underline{u}})}&\leq |\underline{u}|^{d_{n}-1}M,
\end{split}
\end{equation}
where $\bar{\p} = \{L, \nablaslash\}$ and $d_{n}$ is defined in Lemma \ref{lemma-apriori-estimate}.
\end{remark}

\subsection{Energy of the membrane equation}

We recall the geometric formulation for the RME \eqref{2.11}
\begin{equation*}
\Box_{g(\partial\phi) } \phi=0,
\end{equation*}
where $g(\partial\phi)$ is the Lorentzian metric depending on $\p\phi$ and $\Box_{g(\partial\phi)}$ is the associated wave operator. That is to say, our geometric background for the RME \eqref{2.11} has changed to ($\mathbb{R}^{1+n}, g(\partial\phi)$). We should note that the Minkowski null vector fields $L$ and $\underline{L}$ are not causal with respect to the membrane background ($\mathbb{R}^{1+n}, g(\partial\phi)$), since it is easy to check that
\begin{align*}
g(L,L) & =\eta(L,L)+(L\phi)^{2} = (L\phi)^{2}\geq0, \\
g(\underline{L},\underline{L}) & =\eta(\underline{L},\underline{L})+(\underline{L}\phi)^2 = (\underline{L}\phi)^2\geq0.
\end{align*}

In order to associate with non-negative energies, we shall modify $(L,\underline{L})$ and introduce
\begin{equation}\label{2.12}
\tilde{L}=L+(L\phi)^2\underline{L} \quad\text{and} \quad \tilde{\underline{L}}=\underline{L}+(\underline{L}\phi)^2L.
\end{equation}
\begin{lemma}\label{lemma-causal-tdL-tdLb}
If Lemma \ref{lemma-apriori-estimate} holds true,
$\tilde{L}$ and $\tilde{\underline{L}}$ are causal vector fields adapted to $g(\partial\phi)$.
\end{lemma}
\begin{proof}
We calculate straightforwardly to obtain
\begin{align*}
g(\tilde{L},\tilde{L}) & =-3(L\phi)^2+\left( 2L\phi\underline{L}\phi+ (L\phi)^2(\underline{L}\phi)^2\right)(L\phi)^2, \\
g(\tilde{\underline{L}},\tilde{\underline{L}}) & =-3(\underline{L}\phi)^2+\left( 2L\phi\underline{L}\phi+(L\phi)^2(\Lb\phi)^2\right)(\underline{L}\phi)^2.
\end{align*}
It is easy to see that
$g(\tilde{L},\tilde{L})\sim -3(L\phi)^2\leq0$ and $g(\tilde{\underline{L}},\tilde{\underline{L}})\sim -3(\underline{L}\phi)^2\leq0$, where we have used Lemma \ref{lemma-apriori-estimate}.
\end{proof}

Comparing with the geometric method proposed in \cite{Christodoulou1, Miao-Yu1, Speck-Hol-Luk-Wong}, we avoid constructing the optical functions adapted to the background $(\mathbb{R}^{1+n}, g(\partial\phi))$, and stick to the Minkowski double null foliation: $\{C_{u}|u\in\mathbb R\}$ and $\{\underline{C}_{\underline{u}}|\underline{u}\in\mathbb R\}$. In this way, it would be nature to consider the two vector fields $D u$ and $D \ub$, which are given explicitly by
\begin{equation}\label{grad-u-ub-expansion-1}
\begin{split}
D u &\doteq g^{\mu\nu}\p_{\mu}u \p_{\nu}= -\frac{1}{2} L - \frac{L\phi \Lb\phi}{4g} L  - \frac{(L\phi)^2}{4g} \Lb + \frac{L\phi \p^\omega\phi}{2g} \p_\omega, \\
D \ub &\doteq g^{\mu\nu}\p_{\mu}\ub \p_{\nu}=  -\frac{1}{2} \Lb - \frac{L\phi \Lb\phi}{4g} \Lb  - \frac{(\Lb\phi)^2}{4g} L + \frac{\Lb\phi \p^\omega\phi}{2g} \p_\omega.
\end{split}
\end{equation}
 Notice that, $D u$ and $D \ub$ are not necessarily null but they are causal in the background $(\mathbb{R}^{1+n}, g(\partial\phi))$:
 \begin{equation}\label{g-grad-u-ub-causal}
\begin{split}
g(D u, D u) &=g^{uu}= - \frac{(L\phi)^2}{4g} \leq0,\\
g(D \ub, D \ub) &=g^{\ub\ub}= - \frac{(\Lb\phi)^2}{4g} \le 0.\\
\end{split}
\end{equation}

Viewing the formulae \eqref{2.12}, \eqref{grad-u-ub-expansion-1} and the $L^\infty$ estimates for $\p\phi$ in Lemma \ref{lemma-apriori-estimate}, we know that $\tilde L$, $\underline{\tilde L}$ are in fact obtained by extracting the first two leading terms in $-2 D u$ and $-2D \ub$. Instead of using the genuine null vector fields in $(\mathbb{R}^{1+n}, g(\partial\phi))$ or $L$ and $\Lb$ which come from the Minkowski background, the intermediates $\tilde L, \, \underline{\tilde L}$
suffice as multipliers. This can be seen from the following lemma.

\begin{lemma}\label{lemma-energy-formula}
With the bootstrap assumption \eqref{Bootstrap-assumption}, we have $g\sim1$ and
\begin{equation}\label{3.17}
\begin{split}
-P^{u}[\phi_{k},\tilde{L}]&\sim |L\phi_{k}|^2+|L\phi|^2|\nablaslash\phi_{k}|^2+|L\phi|^4|\underline{L}\phi_{k}|^2,\\
-P^{\underline{u}}[\phi_{k},\tilde{L}]&\sim |\nablaslash\phi_{k}|^2+|\underline{L}\phi|^2|L\phi_{k}|^2+|L\phi|^2|\underline{L}\phi_{k}|^2,\\
-P^{u}[\phi_{k},\tilde{\underline{L}}]&\sim |\nablaslash\phi_{k}|^2+|\underline{L}\phi|^2|L\phi_{k}|^2+|L\phi|^2|\underline{L}\phi_{k}|^2,\\
-P^{\underline{u}}[\phi_{k},\tilde{\underline{L}}]&\sim |\underline{L}\phi_{k}|^2+|\underline{L}\phi|^2|\nablaslash\phi_{k}|^2+|\underline{L}\phi|^4|L\phi_{k}|^2,
\end{split}
\end{equation}
providing that $\delta$ is sufficiently small and here $\phi_{k}=\delta^{l}\partial^{l}\Gamma^{k-l}\phi$.
\end{lemma}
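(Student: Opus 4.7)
The plan is to expand each of the four current densities directly in null coordinates, using the explicit formulas for the inverse metric recorded in \eqref{grad-u-ub-expansion-1}, and then apply the pointwise estimates of Lemma \ref{lemma-apriori-estimate} to read off the leading positive quadratic form and absorb the error. As a preliminary I verify $g \sim 1$: in null coordinates $g = 1 + \eta^{\alpha\beta}\p_\alpha\phi\,\p_\beta\phi = 1 - L\phi\,\Lb\phi + |\nablaslash\phi|^2$, and the bounds $|L\phi| + |\nablaslash\phi| \lesssim \delta^{3/4}$, $|\Lb\phi|\lesssim 1$ give $|g-1|\lesssim \delta^{3/4}$.

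For the first equivalence, the null-coordinate components $\tilde L^{\ub} = 1$, $\tilde L^u = (L\phi)^2$ split $P^u[\phi_k, \tilde L] = T^u_{\ub} + (L\phi)^2 T^u_u$. I then substitute the explicit expansion $\p^u\phi_k = g^{u\alpha}\p_\alpha\phi_k = -\tfrac{1}{2}L\phi_k - \tfrac{(L\phi)^2}{4g}\Lb\phi_k - \tfrac{L\phi\,\Lb\phi}{4g}L\phi_k + \tfrac{L\phi}{2g}\p^\omega\phi\,\p_\omega\phi_k$ together with the analogous expansion of the trace $\p^\gamma\phi_k\,\p_\gamma\phi_k$. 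Collecting terms and using $g \sim 1$ produces
\begin{equation*}
-P^u[\phi_k,\tilde L] = \tfrac{1}{2}(L\phi_k)^2 + \tfrac{1}{2}(L\phi)^2|\nablaslash\phi_k|^2 + \tfrac{1}{8}(L\phi)^4(\Lb\phi_k)^2 + \mathcal{R}_1,
\end{equation*}
where every monomial of $\mathcal{R}_1$ carries at least one extra factor of $L\phi$, $\nablaslash\phi$, or $L\phi\,\Lb\phi$ beyond the weights of the three principal terms. Cauchy--Schwarz together with the $\delta^{3/4}$ smallness of $L\phi$ and $\nablaslash\phi$ absorbs $\mathcal{R}_1$ into the three principal quantities for sufficiently small $\delta$, establishing the first line of \eqref{3.17}.

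The remaining three identities follow the same recipe. The crucial observation for $P^{\ub}[\phi_k,\tilde L] = T^{\ub}_{\ub} + (L\phi)^2 T^{\ub}_u$ is the exact cancellation of the cross term $L\phi_k\,\Lb\phi_k$ inside $T^{\ub}_{\ub}$: the contribution $-\tfrac{1}{2}L\phi_k\,\Lb\phi_k$ from $\p^{\ub}\phi_k\cdot L\phi_k$ is eliminated by $+\tfrac{1}{2}L\phi_k\,\Lb\phi_k$ coming from $-\tfrac{1}{2}\cdot 2g^{u\ub}L\phi_k\,\Lb\phi_k$ in the trace, leaving a manifestly positive leading triplet $\tfrac{(\Lb\phi)^2}{8}(L\phi_k)^2 + \tfrac{1}{2}|\nablaslash\phi_k|^2 + \tfrac{3(L\phi)^2}{8}(\Lb\phi_k)^2$ which, after absorbing its own remainder, matches the second line of \eqref{3.17}. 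The two densities involving $\tilde{\Lb}$ follow by the symmetry $L \leftrightarrow \Lb$, $u \leftrightarrow \ub$, which interchanges $\tilde L \leftrightarrow \tilde{\Lb}$ and preserves the structure of $T^\alpha_\beta$.

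The main obstacle is algebraic rather than conceptual: I must track signs and coefficients carefully enough to see that the dangerous $L\phi_k\,\Lb\phi_k$ cross terms cancel exactly in $T^{\ub}_{\ub}$ and (symmetrically) in $T^u_u$. Without these cancellations the lower bounds in the off-diagonal lines of \eqref{3.17} would fail, since the surviving cross term would be comparable to $|L\phi_k||\Lb\phi_k|$ and could not be absorbed by the three intended positive quantities alone. Beyond this, the argument uses only the $L^\infty$ a priori bounds of Lemma \ref{lemma-apriori-estimate} and elementary quadratic algebra.
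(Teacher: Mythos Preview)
Your approach is essentially the same as the paper's: direct expansion of each current in null coordinates, identification of three positive principal terms, and absorption of the remainder via Cauchy--Schwarz and the $L^\infty$ bounds. The paper additionally records at the outset that non-negativity of all four currents follows abstractly from the dominant energy condition, since $\tilde L,\tilde{\Lb}$ and $-\text{Grad}\,u,-\text{Grad}\,\ub$ are all causal for $g(\p\phi)$; this gives a conceptual reason to expect positivity but does not replace the explicit computation needed for the two-sided equivalence.

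One point deserves sharpening. In your treatment of $-P^u[\phi_k,\tilde L]$, the claim that \emph{every} monomial of $\mathcal R_1$ carries an extra small factor beyond the principal weights is not quite correct. The cross term $-\tfrac{(L\phi)^2}{4(1+Q)}\,L\phi_k\,\Lb\phi_k$ (the paper's $I_3$) sits exactly at the geometric mean of the principal weights $(L\phi_k)^2$ and $(L\phi)^4(\Lb\phi_k)^2$, with no additional $\delta$-smallness available. Its absorption therefore requires Cauchy--Schwarz with \emph{numerical} coefficient tracking: one has $\tfrac{(L\phi)^2}{4}|L\phi_k\,\Lb\phi_k|\le \tfrac{(L\phi)^4}{16}(\Lb\phi_k)^2+\tfrac{1}{4}(L\phi_k)^2$, and the constants $\tfrac{1}{16},\tfrac{1}{4}$ are strictly smaller than the principal coefficients $\tfrac{1}{8},\tfrac{1}{2}$. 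This is the one place in the first line where the argument needs more than $\delta$-smallness, and it is why the paper isolates this term explicitly. Your exact-cancellation observation for $T^{\ub}_{\ub}$ in the second line is correct and gives a clean route there.
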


\begin{proof}
From Lemma \ref{lemma-apriori-estimate}, we know that
\begin{equation*}
|\bar\partial\phi| \lesssim \delta^{\frac{3}{4}} M \quad \text{and} \quad |\Lb\phi| \lesssim M.
\end{equation*}
Thus if $\delta$ is sufficiently small, we have
\begin{equation*}
g=1-\p_u\phi \p_{\ub}\phi + |\nablaslash \phi|^2  \sim 1 \pm \delta M^2 \sim1.
\end{equation*}

We have shown in Lemma \ref{lemma-causal-tdL-tdLb} and in \eqref{g-grad-u-ub-causal} that $\tilde{L}, \, \tilde{\Lb}$ and $-D u, \, -D \ub$ are all causal with respect to $g(\partial\phi)$. In view of the relations between the energy momentum tensor $T_{\alpha \beta}(\phi_k)$ and the currents
\begin{align*}
-P^{u}[\phi_k,\tilde{L}] = T(-D u, \tilde{L}), & \quad -P^{\ub}[\phi_{k},\tilde{L}] = T(-D \ub, \tilde{L}), \\
-P^{u}[\phi_k,\tilde{\Lb}] = T(-D u, \tilde{\Lb}), & \quad -P^{\ub}[\phi_{k},\tilde{\Lb}] = T(-D \ub, \tilde{\Lb}),
\end{align*}
the four currents above should be all non-negative by the energy condition. This is verified as below.

For the first line of \eqref{3.17}, we calculate that
\begin{align*}
P^{u}[\phi_{k},\tilde{L}]&= g^{u\gamma}\partial_{\gamma}\phi_{k}\tilde{L}\phi_{k}-\frac{1}{2}g^{\gamma\delta}\partial_{\gamma}\phi_{k}\partial_{\delta}\phi_{k}\delta^{u}_{\alpha} \tilde{L}^\alpha \\
&=g^{u\gamma}\partial_{\gamma}\phi_{k}\partial_{\underline{u}}\phi_{k}+|L\phi|^2g^{u\gamma}\partial_{\gamma}\phi_{k}\partial_{u}\phi_k
-\frac{1}{2}|L\phi|^2g^{\gamma\delta}\partial_{\gamma}\phi_{k}\partial_{\delta}\phi_{k}\\
&=g^{u\gamma}\partial_{\gamma}\phi_{k}\partial_{\ub}\phi_{k}+\frac{1}{2}|L\phi|^2g^{uu}\partial_{u}\phi_{k}\partial_{u}\phi_{k}\\
&\quad -\frac{1}{2}|L\phi|^2(g^{\underline{u}\underline{u}}(L\phi_{k})^2+2g^{\underline{u}\omega}L\phi_{k}\p_\omega\phi_{k}+g^{\omega\theta}\partial_{\omega}\phi_{k}\partial_{\theta}\phi_{k}).
\end{align*}
Substitute the metric \eqref{2.9} into the above formula,
\begin{align*}
- P^{u}[\phi_{k},\tilde{L}]& = \left( \frac{1}{2}+\frac{L\phi\underline{L}\phi}{4g}\right)  |L\phi_{k}|^2
- \frac{L\phi\nablaslash\phi}{2g}\nablaslash\phi_{k}L\phi_{k} +\frac{|L\phi|^2}{4g} \underline{L}\phi_{k} L\phi_{k} +\frac{|L\phi|^4}{8g}|\Lb\phi_{k}|^2 \\
&\quad + \frac{1}{2}|L\phi|^2\left(-\frac{|\underline{L}\phi|^2}{4g}|L\phi_{k}|^2+\frac{\underline{L}\phi\nablaslash\phi}{g}L\phi_{k}\nablaslash\phi_{k}
+|\nablaslash\phi_{k}|^2 - \frac{|\nablaslash\phi \nablaslash\phi_k|^2}{g}\right)\\
&=\frac{1}{2}|L\phi_{k}|^2 + \frac{1}{8g} |L\phi|^4 |\underline{L}\phi_{k}|^2+\frac{1}{2}|L\phi|^2|\nablaslash\phi_{k}|^2+ Er_{1} + Er_2,
\end{align*}
where in the last equality, the first three terms are the leading terms with good signs, while the error terms $Er_1, \, Er_2$ are defined by
\begin{align*}
Er_1 & =  \frac{|L\phi|^2}{4g}L\phi_{k}\underline{L}\phi_{k}, \\
Er_2 & = \frac{L\phi\underline{L}\phi}{4g}|L\phi_{k}|^2 -
\frac{|L\phi|^2|\underline{L}\phi|^2}{8g}|L\phi_{k}|^2 - \frac{L\phi\nablaslash\phi}{2g}\nablaslash\phi_{k}L\phi_{k}  \\
& \quad +  \frac{|L\phi|^2 \underline{L}\phi\nablaslash\phi}{2g} \nablaslash\phi_{k} L\phi_{k} - \frac{|L\phi|^2}{2g}  |\nablaslash\phi \nablaslash\phi_{k}|^2.
\end{align*}
We will treat them one by one. For $Er_1$, we apply the Cauchy's inequality
$$
|Er_{1}| \leq \frac{|L\phi|^4}{16g}|\underline{L}\phi_{k}|^2+\frac{1}{4g}|L\phi_{k}|^2.
$$
Therefore, $Er_{1}$ can be absorbed by the first two leading terms $\frac{1}{2}|L\phi_{k}|^2+\frac{|L\phi|^4}{8g}|\underline{L}\phi_{k}|^2$, so that these main terms are roughly $\frac{1}{4}|L\phi_{k}|^2+ \frac{|L\phi|^4}{16}|\underline{L}\phi_{k}|^2$, which still exhibit good signs.

For $Er_2$, it is easy to check that
$|Er_2| \lesssim ( \delta^{\frac{3}{4}} M + \delta M^2) |L\phi_{k}|^2+\delta^{\frac{3}{4}} M |L\phi|^2|\nablaslash\phi_{k}|^2$. If $\delta$ is small enough, they are only small perturbations of the main terms, thus we arrive at
$$
-P^{u}[\phi_{k},\tilde{L}]\sim |L\phi_{k}|^2+|L\phi|^4|\underline{L}\phi_{k}|^2+|L\phi|^2|\nablaslash\phi_{k}|^2>0.
$$

As for the second one in \eqref{3.17},
\begin{align*}
P^{\ub}[\phi_{k},\tilde{L}] &= g^{\ub \gamma}\partial_{\gamma}\phi_{k}\tilde{L}\phi_{k}-\frac{1}{2}g^{\gamma\delta}\partial_{\gamma}\phi_{k}\partial_{\delta}\phi_{k}\delta^{\ub}_{\alpha} \tilde{L}^\alpha \\
&=g^{\ub \gamma}\partial_{\gamma}\phi_{k}\partial_{\underline{u}}\phi_{k}+|L\phi|^2g^{\ub \gamma}\partial_{\gamma}\phi_{k}\partial_{u}\phi_k
-\frac{1}{2} g^{\gamma\delta}\partial_{\gamma}\phi_{k}\partial_{\delta}\phi_{k}\\
&=-\frac{1}{2}|\nablaslash \phi_k |^2  -\frac{1}{2}|L \phi |^2 |\Lb \phi_k|^2 -\frac{Q(\phi, \phi_k)}{g} \left( \p^{\ub} \phi \p_{\ub} \phi_k + |L\phi|^2 \p^{\ub} \phi \p_u \phi_k \right) + \frac{Q^2(\phi, \phi_k)}{2g} \\
&=-\frac{1}{2} |\nablaslash \phi_k |^2  -\frac{1}{2} |L \phi|^2 |\Lb \phi_k|^2 -\frac{Q(\phi, \phi_k)}{g} \left( \frac{1}{4} \p_{\ub} \phi \p_{u} \phi_k - \frac{1}{4} \p_{u} \phi \p_{\ub} \phi_k   - \frac{1}{2} \nablaslash \phi \nablaslash \phi_k \right)\\
&\quad + \frac{Q(\phi, \phi_k)}{2g} |L\phi|^2 \Lb \phi \Lb \phi_k.
\end{align*}
It follows by straightforward calculations that
\begin{align*}
- P^{\ub}[\phi_{k},\tilde{L}] &=\frac{1}{2} |\nablaslash \phi_k |^2  + \left( \frac{1}{2} - \frac{1}{8g}\right) |L \phi |^2 |\Lb \phi_k|^2 + \frac{1}{8g} |\Lb \phi |^2 |L \phi_k|^2  + Er_3, \\
Er_3 &= \frac{\nablaslash \phi \nablaslash \phi_k}{2g}  L \phi \Lb \phi_k - \frac{1}{2g} |\nablaslash \phi \nablaslash \phi_k|^2  - \frac{Q(\phi, \phi_k)}{2g} L\phi \Lb \phi L\phi \Lb \phi_k.
\end{align*}
We can check that
$|Er_3 | \lesssim \delta^{\frac{3}{4}} M |\nablaslash \phi_{k}|^2 + \left( \delta^{\frac{3}{4}} M + \delta M^2 \right) |L\phi|^2 |\Lb\phi_{k}|^2+\delta M^2 |\Lb\phi|^2|L\phi_{k}|^2$. Thus,
$$
-P^{\ub}[\phi_{k},\tilde{L}]\sim |\nablaslash \phi_{k}|^2+ |L \phi |^2 |\Lb \phi_k|^2 +  |\Lb \phi |^2 |L \phi_k|^2.
$$

The third and fourth lines in \eqref{3.17} can be proved in an analogous way.
\end{proof}

Before proceeding to the energy estimates, we need to investigate the double null structure of the higher order RME in the null coordinate system $(u,\underline{u},\theta)$.

\subsection{Higher order equations and double null condition}\label{subsec-The high order equation and double null condition}
\subsubsection{The higher order RME}
In this section, we will deduce the higher order equations of RME which arises from commuting with the Klainerman vector fields $Z$ for $N$ times.
\begin{lemma}\label{lemma-high-order-eq}
Commuting the RME \eqref{1.1} with the vector field $Z$ for $N$ times, we have
\begin{equation}\label{3.18}
\begin{split}
& \Box_{g(\p\phi)}Z^{N}\phi   = (1+Q)^{-1} g^{\mu \nu} \partial_{\nu}Q \partial_{\mu}Z^N \phi \\
+ & \sum_{ \substack{i_1+\cdots +i_k \leq N, \\ i_1,\cdots, i_k <N, \\ 3 \leq k \leq 2N+3} } F_{i_1, \cdots, i_k}(Q)Q(Z^{i_1} \phi, Q( Z^{i_2}\phi, Z^{i_3} \phi) )\\
&\quad \quad \quad \quad \quad \quad  \times Q(Z^{i_4} \phi,Z^{i_5} \phi) \cdots Q( Z^{i_{k-1}}\phi, Z^{i_k} \phi),
\end{split}
\end{equation}
where $F_{i_1, \cdots i_k}(Q)$ is a fractional function on $Q = Q(\phi,\phi)=\eta^{\alpha\beta}\p_{\alpha} \phi \p_{\beta} \phi$ and $k$ is odd, and if $k = 3$, then this is to be interpreted as the factor $Q(Z^{i_4} \phi,Z^{i_5} \phi)$ being absent.

\end{lemma}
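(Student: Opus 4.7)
The plan is to prove the identity by induction on $N$, working with a ``Minkowski wave-gauge'' reformulation of the RME. Using the wave coordinate condition \eqref{wave-coordinate}, the explicit expansion $g^{\alpha\beta} = \eta^{\alpha\beta} - (1+Q)^{-1}\partial^\alpha\phi\,\partial^\beta\phi$ in \eqref{2.9}, and the identity $\partial_\alpha Q = 2\partial^\beta\phi\,\partial_\alpha\partial_\beta\phi$, the RME \eqref{2.8} is equivalent to
$$\Box_\eta \phi = \frac{Q(\phi, Q(\phi,\phi))}{2(1+Q)},$$
a triple null form divided by a fractional function of $Q$. This is essentially the $N=0$ version of the claim, once the quasilinear discrepancy $\Box_{g(\partial\phi)}-\Box_\eta$ is re-expressed through null forms (see below). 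The inductive step proceeds by applying $Z^N$ to both sides and comparing.

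On the left-hand side, $[\Box_\eta, Z] = c_Z\Box_\eta$ with $c_Z = -2$ for $Z = S$ and $c_Z = 0$ otherwise, so iterating gives $\Box_\eta Z^N\phi = Z^N\Box_\eta\phi - \sum_{J<N}\alpha_J \Box_\eta Z^J\phi$, and the sub-leading pieces are handled by the inductive hypothesis. On the right, I distribute $Z^N$ via Leibniz, exploiting the null-form commutator \eqref{commu-Q-Gamma}: each $Z$ hitting a factor $Z^{i_j}\phi$ either raises one index $i_j$ by $1$ or reproduces the existing null form with a constant multiple, while each $Z$ hitting a fractional factor $F(Q)$ produces, via the chain rule, $F'(Q)\cdot ZQ$ with $ZQ = 2Q(Z\phi,\phi)+c_Z Q$. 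This latter mechanism introduces an extra null form factor $Q(Z^j\phi,\phi)$ and raises the number $k$ of $Z^{i_\bullet}\phi$-slots by $2$, which preserves both the odd parity and the lower bound $k\geq 3$ starting from the base configuration $Q(\phi, Q(\phi,\phi))$.

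To pass from $\Box_\eta Z^N\phi$ to the desired $\Box_{g(\partial\phi)}Z^N\phi$, I use the key algebraic identity
$$\partial^\alpha\phi\,\partial^\beta\phi\,\partial_\alpha\partial_\beta\psi \;=\; Q\bigl(\phi, Q(\phi,\psi)\bigr) - \tfrac{1}{2}\,Q\bigl(\psi, Q(\phi,\phi)\bigr),$$
applied with $\psi = Z^N\phi$, which realizes the quasilinear correction $\Box_{g(\partial\phi)}Z^N\phi - \Box_\eta Z^N\phi$ as a sum of (triple) null forms involving $Z^N\phi$. Finally, every term on the right-hand side that contains the top-order derivative $\partial Z^N\phi$ (i.e.\ those with some $i_j=N$) is consolidated by the algebraic identity
$$\frac{g^{\mu\nu}\partial_\nu Q\,\partial_\mu Z^N\phi}{1+Q} \;=\; \frac{Q(Z^N\phi, Q(\phi,\phi))}{1+Q} - \frac{Q(\phi, Z^N\phi)\,Q(\phi, Q(\phi,\phi))}{(1+Q)^2},$$
which is precisely the linear first-order correction stated in the lemma. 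Whatever remains is nonlinear in the lower-order objects $Z^{i_j}\phi$ with $i_j<N$ and $\sum i_j \leq N$, and by construction has the asserted double null form structure.

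The main obstacle is purely combinatorial bookkeeping, namely verifying that $(i)$ the total number $k$ of $Z^{i_\bullet}\phi$-factors stays odd and at least $3$ after each commutation (controlled by the parity analysis above), $(ii)$ \emph{every} contribution carrying $Z^N\phi$ in some slot can be gathered into the single first-order piece $(1+Q)^{-1}g^{\mu\nu}\partial_\nu Q\,\partial_\mu Z^N\phi$ via the identity just displayed, and $(iii)$ the auxiliary $\Box_\eta Z^J\phi$, $J<N$, coming from $[\Box_\eta, Z^N]$ are absorbed into the sum with $\sum i_j\leq N$ thanks to the inductive hypothesis at lower orders. Once the null form commutator \eqref{commu-Q-Gamma} and the Minkowski reformulation are in place, the remaining work is routine algebraic manipulation.
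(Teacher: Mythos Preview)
Your proposal is correct and follows essentially the same route as the paper: both start from the Minkowski form $\Box_\eta\phi=(1+Q)^{-1}\,\tfrac12 Q(\phi,Q(\phi,\phi))$, commute $Z^N$ using $[\Box_\eta,Z]=c_Z\Box_\eta$ and the null-form commutator \eqref{commu-Q-Gamma}, then use precisely the two algebraic identities you display to (a) absorb the quasilinear second-order piece $\partial^\alpha\phi\,\partial^\beta\phi\,\partial_\alpha\partial_\beta Z^N\phi$ into $\Box_{g(\partial\phi)}Z^N\phi$ and (b) consolidate the remaining top-order first-derivative contributions into $(1+Q)^{-1}g^{\mu\nu}\partial_\nu Q\,\partial_\mu Z^N\phi$. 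The only cosmetic difference is that the paper applies $Z^N$ in one shot and reads off the top-order terms directly (its (3.20)--(3.21)), whereas you organize the same computation as an induction; the substance is identical.
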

\begin{proof}
We rewrite the RME \eqref{1.1} as below
\begin{equation}\label{3.19}
\Box\phi-\frac{1}{2} \frac{Q(\phi,Q(\phi,\phi))}{1+Q(\phi,\phi)}=0.
\end{equation}
Applying $Z$ to \eqref{3.19} for $N$ times and recalling that $[\Box,Z]=0$ (except for $[\Box,S]=-2\Box$), we obtain
\begin{equation}\label{3.20}
\begin{split}
&\Box Z^{N}\phi-\frac{Q(\phi,Q(Z^{N}\phi,\phi))}{1+Q}-\frac{Q(Z^{N}\phi,Q(\phi,\phi))}{2(1+Q)} + \frac{Q(\phi,Q(\phi, \phi)) Q(Z^{N}\phi, \phi)}{(1+Q)^2}\\
&= \sum_{ \substack{i_1+\cdots +i_k \leq N, \\ i_1,\cdots, i_k <N, \\ 3 \leq k \leq 2N+3} } F_{i_1, \cdots, i_k}(Q)Q(Z^{i_1} \phi, Q( Z^{i_2}\phi, Z^{i_3} \phi) )\\
&\quad \quad \quad \quad \quad \quad  \times Q(Z^{i_4} \phi,Z^{i_5} \phi) \cdots Q( Z^{i_{k-1}}\phi, Z^{i_k} \phi),
\end{split}
\end{equation}
Here we have made use of the fact that $Q$ is the basic null form $Q_0$, and the commuting formulae for $Z$ and $Q_0$ \eqref{commu-Q-Gamma}.
Equivalently if $Z$ is a Poincar\'{e} generator (the Poincar\'{e} group being the isometry group of $\eta$), then $[Z, Q_0] = 0$; if $Z=S$ is the dilatation, then $[S, Q_0] = -2 Q_0$.
Expanding $Q$, and noticing that
\begin{equation}\label{3.21}
\Box Z^{N}\phi-\frac{\partial^{\mu}\phi\partial^{\nu}\phi\partial^{2}_{\mu\nu}Z^N\phi}{1+Q}=g^{\mu\nu}\partial_{\mu}\partial_{\nu}Z^{N}\phi = \Box_{g(\p\phi)}Z^{N}\phi,
\end{equation}
where in the second equality,  the wave coordinate condition \eqref{wave-coordinate} is used, we obtain that the first line of \eqref{3.20} is identical to
\begin{equation*}
\Box_{g(\p\phi)}Z^{N}\phi - \frac{\partial^{\mu}Q \partial_{\mu}Z^N\phi}{1+Q} + \frac{\p^\nu\phi \p_\nu Q  \p_\mu\phi  \p^\mu Z^N\phi}{(1+Q)^2} =\Box_{g(\p\phi)}Z^{N}\phi - \frac{g^{\nu\mu} \p_\nu Q \p_\mu Z^N\phi}{1+Q}.
\end{equation*}
Thus we achieve \eqref{3.18}.
\end{proof}
\begin{remark}
The higher order RME \eqref{3.18} is a geometric wave equation with inhomogeneous terms that are at least cubic. What is more, these cubic non-linearities taking the double null form $Q(\varphi,Q(\chi,\psi))$ contain at least four derivatives. Let us stress here that in the high order RME, the null form $Q$ takes only the first type of the basic null forms $Q_0$ \eqref{basic null forms}. This would be crucial in Section \ref{Double null condition in null coordinates}.
\end{remark}

\subsubsection{The double null condition}\label{Double null condition in null coordinates}
We are ready to study the structure of the nonlinear term $Q(\phi,Q(\psi,\chi))$ in null coordinates.
The projection of $\partial_{i},$ $i=1, \cdots, n$ onto the tangent space of the sphere $S^{n-1}$ would be denoted by
\begin{equation*}
\bar{\partial_{i}}=\partial_{i}-\omega_{i}\omega^{j}\partial_{j}=\partial_{i}-\omega_{i}\partial_{r}, \quad \omega_{i}=\frac{x_{i}}{r}.
\end{equation*}
Note that, $\{\bar{\partial_{i}}, \, i=1,\cdots, n\}$ composes of a global frame on $S^{n-1}$ and
$\bar{\partial_{i}}=r^{-1}\omega^{j}\Omega_{ij}.$

Let us introduce a lemma, which is originated from \cite{Lindblad-Rodnianski1}.
\begin{lemma}\label{lemma-L-R}
If $k^{\alpha\beta}$ is a symmetric tensor, then
\begin{equation*}
\begin{split}
k^{\alpha\beta}\partial_{\alpha}\partial_{\beta}&=k_{LL}\underline{L}^2+2k_{L\underline{L}}L\underline{L}+k_{\underline{L}\underline{L}}L^2
+k^{ij}\bar{\partial_{i}}\bar{\partial_{j}}\\
&+2k_{L}^{j}\bar{\partial_{j}}\underline{L}+2k_{\underline{L}}^{j}\bar{\partial_{j}}L+r^{-1}\bar{tr}k\underline{L}+r^{-1}\bar{tr}k L
-r^{-1}k^{ij}\omega_{i}\bar{\partial_{j}},
\end{split}
\end{equation*}
where $\bar{tr}k=\bar{\delta}^{ij} k_{ij}$, with $\bar{\delta}^{ij}= \delta^{ij} - \omega^i \omega^j$ being the projection operator, and the sum is over $i,j =1, \cdots, n$.
\end{lemma}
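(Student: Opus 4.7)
The plan is a direct expansion of $k^{\alpha\beta}\partial_\alpha \partial_\beta$ in the null/angular frame $(L, \underline{L}, \bar{\partial_i})$. The starting identity is the decomposition of the coordinate derivative,
\begin{equation*}
\partial_\alpha = -\tfrac{1}{2}\underline{L}_\alpha\, L - \tfrac{1}{2}L_\alpha\, \underline{L} + (\bar{\partial})_\alpha,
\end{equation*}
where $(\bar{\partial})_0 = 0$ and $(\bar{\partial})_i = \bar{\partial_i}$; this is a direct component-wise check using $L_0 = \underline{L}_0 = -1$, $L_i = \omega_i$, $\underline{L}_i = -\omega_i$, and is equivalent to the familiar $\partial_t = \tfrac{1}{2}(L + \underline{L})$ and $\partial_i = \bar{\partial_i} + \tfrac{1}{2}\omega_i(L - \underline{L})$.

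Substituting this expansion into $k^{\alpha\beta}\partial_\alpha\partial_\beta$, expanding the resulting product, using the symmetry of $k$ to combine cross terms, and exploiting $[L,\underline{L}]=0$ to freely collect the $L,\underline{L}$ factors, the principal part yields immediately the six stated second-order terms $k_{LL}\underline{L}^2$, $2k_{L\underline{L}}L\underline{L}$, $k_{\underline{L}\underline{L}}L^2$, $k^{ij}\bar{\partial_i}\bar{\partial_j}$, $2k_L^j\bar{\partial_j}\underline{L}$, $2k_{\underline{L}}^j\bar{\partial_j}L$. The first-order $r^{-1}$ remainders then arise from two distinct sources once the outer $\partial_\alpha$ is propagated through the inner expression for $\partial_\beta$. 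First, the position-dependent coefficients $L_\beta = \omega_\beta$ and $\underline{L}_\beta = -\omega_\beta$ (for spatial $\beta$) get differentiated, producing $\partial_i \omega_j = (\delta_{ij}-\omega_i\omega_j)/r = \bar{\delta}_{ij}/r$ pieces; after contraction with $k^{\alpha\beta}$ these assemble, via $k^{ij}\bar{\delta}_{ij} = \overline{\mathrm{tr}}\,k$, into the $r^{-1}\overline{\mathrm{tr}}k\cdot L$ and $r^{-1}\overline{\mathrm{tr}}k\cdot\underline{L}$ remainders. Second, to place the angular--null cross terms in the canonical ordering $\bar{\partial_j}L$ and $\bar{\partial_j}\underline{L}$ one uses the frame commutators $[L,\bar{\partial_i}]=-r^{-1}\bar{\partial_i}$ and $[\underline{L},\bar{\partial_i}]=r^{-1}\bar{\partial_i}$, while the final $-r^{-1}k^{ij}\omega_i\bar{\partial_j}$ piece emerges upon reordering the double angular term via $[\bar{\partial_i},\bar{\partial_j}] = r^{-1}(\omega_i\bar{\partial_j}-\omega_j\bar{\partial_i})$.

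The main obstacle is the accounting of these competing $r^{-1}$ contributions: there are several pieces coming both from derivatives of $\omega$ and from frame commutators, and one must verify that they combine to give exactly the three stated remainders, with all other potential cross terms cancelling by symmetry/antisymmetry. A convenient sanity check is the specialization $k^{\alpha\beta} = \eta^{\alpha\beta}$, which should reduce the identity to the standard null-frame decomposition of the flat wave operator $\Box = -L\underline{L} + r^{-2}\triangle_{S^{n-1}} + \tfrac{n-1}{r}\partial_r$, thereby fixing both the signs and the coefficients of the first-order corrections.
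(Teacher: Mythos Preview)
The paper does not prove this lemma; it is quoted from Lindblad--Rodnianski and stated without proof. Your plan---direct expansion via $\partial_t=\tfrac12(L+\underline{L})$, $\partial_i=\bar{\partial_i}+\tfrac12\omega_i(L-\underline{L})$, then bookkeeping of the $r^{-1}$ remainders---is the standard route and is correct in outline.

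There is one concrete error in your accounting. You attribute the remainder $-r^{-1}k^{ij}\omega_i\bar{\partial_j}$ to the commutator $[\bar{\partial_i},\bar{\partial_j}]=r^{-1}(\omega_i\bar{\partial_j}-\omega_j\bar{\partial_i})$. But $k^{ij}$ is symmetric while this commutator is antisymmetric in $i,j$, so $k^{ij}[\bar{\partial_i},\bar{\partial_j}]=0$; nothing can emerge from reordering the double-angular term. The term $-r^{-1}k^{ij}\omega_i\bar{\partial_j}$ instead comes from the mixed radial--angular (equivalently null--angular) pieces: in $k^{ij}\partial_i\partial_j$ one picks up $k^{ij}\omega_i\,\partial_r\bar{\partial_j}$, and commuting $\partial_r$ past $\bar{\partial_j}$ via $[\partial_r,\bar{\partial_j}]=-r^{-1}\bar{\partial_j}$ produces exactly $-r^{-1}k^{ij}\omega_i\bar{\partial_j}$. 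Equivalently, in your language, it arises from the $[L,\bar{\partial_j}]$ and $[\underline{L},\bar{\partial_j}]$ reorderings in the cross terms $k_L^j$, $k_{\underline{L}}^j$ (recall $k_L^j$ contains the piece $k^{ij}\omega_i$), not from the purely angular block. Once you redirect that one piece of the narrative, the rest of your scheme---differentiating the $\omega$-coefficients to generate the $r^{-1}\overline{\mathrm{tr}}\,k$ terms and using $[L,\bar{\partial_i}]$, $[\underline{L},\bar{\partial_i}]$ for the cross-term ordering---goes through, and your $k=\eta$ sanity check is a good way to confirm the final coefficients.
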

We recall the definition $Q(\phi,Q(\psi,\chi))=\partial^{\alpha}\phi\partial^{\beta}\psi \partial_{\alpha}\partial_{\beta}\chi+
\partial^{\alpha}\phi\partial^{\beta}\chi \partial_{\alpha}\partial_{\beta}\psi$.
For notational convenience, we define the symmetrization
\begin{equation}\label{def-symmetri-double-null}
S^{\alpha \beta}(\phi, \psi)=\frac{1}{2}( \partial^{\alpha}\phi\partial^{\beta}\psi + \partial^{\alpha}\psi\partial^{\beta}\phi).
\end{equation}
Then
\begin{equation}\label{def-Q-double-null}
Q(\phi,Q(\psi,\chi))=
S^{\alpha \beta}(\phi, \psi) \partial_{\alpha}\partial_{\beta}\chi +S^{\alpha \beta}(\phi, \chi)\partial_{\alpha}\partial_{\beta}\psi.
\end{equation}
In what follows, we will further utilize the decomposition $\p_i = \bar{\p}_i +\omega^i \p_r$, and reformulate $Q(\phi,Q(\psi,\chi))$ so that it exhibits the null structure apparently in null coordinates.
\begin{lemma}\label{lemma-double}
In terms of the null frame $\{ L,\underline{L},\bar{\partial_{i}}, i=1,\cdots, n \}$, we have
\begin{equation}\label{double-null}
\begin{split}
S^{\alpha \beta}(\phi, \psi) \partial_{\alpha}\partial_{\beta}\chi
&= S_{LL} \underline{L}^2\chi+2S_{L \Lb} L\underline{L}\chi+S_{\Lb \Lb} L^2\chi
+\sum_{i,j=1}^{n} \frac{1}{2}(\bar{\partial_{i}}\phi\bar{\partial_{j}}\psi + \bar{\partial_{i}}\psi\bar{\partial_{j}}\phi ) \bar{\partial_{i}}\bar{\partial_{j}}\chi\\
&\quad +\sum_{i=1}^{n} (L\phi\bar{\partial_{i}}\psi+ L\psi\bar{\partial_{i}}\phi) \bar{\partial_{i}}\underline{L}\chi
+\sum_{i=1}^{n} (\underline{L}\phi\bar{\partial_{i}}\psi + \underline{L}\psi\bar{\partial_{i}}\phi) \bar{\partial_{i}}L\chi \\
&\quad + \frac{1}{2r}  \sum_{i=1}^{n} (\bar{\partial_{i}}\phi\bar{\partial_{i}}\psi + \bar{\partial_{i}}\psi\bar{\partial_{i}}\phi)  \partial_{r}\chi -\frac{1}{r} \sum_{i=1}^{n} (\bar{\partial_{i}}\phi\partial_{r}\psi + \bar{\partial_{i}}\psi\partial_{r}\phi) \bar{\partial_{i}}\chi.
\end{split}
\end{equation}
\end{lemma}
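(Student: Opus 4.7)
The proof will be a direct application of the preceding decomposition lemma to the symmetric $(2,0)$-tensor $k^{\alpha\beta}=S^{\alpha\beta}(\phi,\psi)$ defined in \eqref{def-symmetri-double-null}. My plan has three steps.

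First, I observe that $S^{\alpha\beta}(\phi,\psi)$ is manifestly symmetric from its definition, so the hypotheses of the preceding lemma are satisfied. It therefore remains to identify each of its null-frame components and substitute them into that lemma's decomposition formula.

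Second, I would compute each null-frame component by direct substitution, using that the $\eta$-musical operator acts as
\begin{equation*}
\partial^{\alpha} f\,\partial_{\alpha} = -\tfrac{1}{2}Lf\,\underline{L} -\tfrac{1}{2}\underline{L}f\,L + \bar\partial^{i}f\,\bar\partial_{i},
\end{equation*}
applied with $f\in\{\phi,\psi\}$. After the symmetrization built into \eqref{def-symmetri-double-null}, this yields (up to the numerical constants fixed by the conventions of the preceding lemma)
\begin{equation*}
S_{LL}\sim L\phi\,L\psi,\qquad S_{\underline{L}\underline{L}}\sim\underline{L}\phi\,\underline{L}\psi,\qquad S_{L\underline{L}}\sim \tfrac{1}{2}(L\phi\,\underline{L}\psi+L\psi\,\underline{L}\phi),
\end{equation*}
\begin{equation*}
S_L^{j}\sim L\phi\,\bar\partial_{j}\psi+L\psi\,\bar\partial_{j}\phi,\qquad S_{\underline{L}}^{j}\sim \underline{L}\phi\,\bar\partial_{j}\psi+\underline{L}\psi\,\bar\partial_{j}\phi,
\end{equation*}
together with the tangential pieces $S^{ij}\sim \tfrac{1}{2}(\bar\partial_{i}\phi\,\bar\partial_{j}\psi+\bar\partial_{i}\psi\,\bar\partial_{j}\phi)$ and the trace $\bar{tr}\,S\sim\sum_{i}\bar\partial_{i}\phi\,\bar\partial_{i}\psi$.

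Third, substituting these expressions into the general formula of the preceding lemma reproduces \eqref{double-null} line by line: the three pure coefficients $S_{LL},S_{L\underline{L}},S_{\underline{L}\underline{L}}$ give the first line; the tangential component $S^{ij}\bar\partial_{i}\bar\partial_{j}\chi$ produces the Hessian-type piece; the mixed components $2S_L^{j}\bar\partial_{j}\underline{L}\chi$ and $2S_{\underline{L}}^{j}\bar\partial_{j}L\chi$ give the cross-derivative terms; and the trace $r^{-1}\bar{tr}\,S$ together with the anisotropic correction $-r^{-1}S^{ij}\omega_{i}\bar\partial_{j}\chi$ produce the last two $r^{-1}$ terms involving $\partial_{r}\chi$ and $\bar\partial_{i}\chi$ respectively.

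The argument is purely algebraic and involves no analytic difficulty. The only genuine obstacle is careful bookkeeping of the various index conventions in use: raised versus lowered Greek indices with respect to $\eta$, the null frame $(L,\underline{L})$ versus the Cartesian spatial frame $\{\bar\partial_{i}\}$, and the projection $\bar\delta^{ij}=\delta^{ij}-\omega^{i}\omega^{j}$ onto $S^{n-1}$. Once these are fixed consistently with those of the preceding lemma, each coefficient in \eqref{double-null} matches directly and the identity is established.
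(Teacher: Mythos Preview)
Your overall strategy --- apply the preceding Lindblad--Rodnianski lemma with $k^{\alpha\beta}=S^{\alpha\beta}(\phi,\psi)$ --- is exactly right, and is what the paper does. But Step~2 of your proposal contains a genuine gap that skips the actual content of the proof.

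In the preceding lemma the spatial indices on $k^{ij}$, $k_L^{\,j}$, $k_{\underline L}^{\,j}$ are \emph{Cartesian}: $k^{ij}=S^{ij}=\tfrac12(\partial^{i}\phi\,\partial^{j}\psi+\partial^{i}\psi\,\partial^{j}\phi)$ and $k_L^{\,j}=\tfrac12(L\phi\,\partial^{j}\psi+L\psi\,\partial^{j}\phi)$, with $\partial_i$, not $\bar\partial_i$. Your identifications $S^{ij}\sim\tfrac12(\bar\partial_i\phi\,\bar\partial_j\psi+\cdots)$ and $S_L^{\,j}\sim L\phi\,\bar\partial_j\psi+\cdots$ are therefore not what the lemma hands you; they are what you must \emph{prove} after a further reduction. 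Indeed, with your version of $S^{ij}$ the term $-r^{-1}S^{ij}\omega_i\bar\partial_j\chi$ would vanish (since $\omega^i\bar\partial_i=0$), and you would never produce the full coefficient $-\tfrac{1}{r}(\bar\partial_i\phi\,\partial_r\psi+\bar\partial_i\psi\,\partial_r\phi)$ in the last line of \eqref{double-null}.

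The missing step is precisely the one the paper carries out: after applying the preceding lemma, one must substitute $\partial_i=\bar\partial_i+\omega_i\partial_r$ in each coefficient and use the cancellation $\omega^{j}\bar\partial_j=0$ together with $\bar\partial_i\omega_j=r^{-1}(\delta_{ij}-\omega_i\omega_j)$. This replaces $\partial^{j}$ by $\bar\partial_j$ in the mixed terms $k_L^{\,j}\bar\partial_j\underline L\chi$ and $k_{\underline L}^{\,j}\bar\partial_j L\chi$ for free, but in the term $k^{ij}\bar\partial_i\bar\partial_j\chi$ it generates an additional lower-order piece $-r^{-1}\sum_i\bar\partial_i\phi\,\partial_r\psi\,\bar\partial_i\chi$ (and its symmetrization). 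That extra piece combines with the $-r^{-1}k^{ij}\omega_i\bar\partial_j\chi$ term to give the correct final coefficient. Once you insert this reduction, your outline becomes a complete proof identical to the paper's.
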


\begin{proof}
Replacing $k^{\alpha\beta}$ by $S^{\alpha \beta}(\phi, \psi)$ in Lemma \ref{lemma-L-R}, we obtain
\begin{align*}
S^{\alpha \beta}(\phi, \psi) \partial_{\alpha}\partial_{\beta}\chi &= S_{LL} \underline{L}^2\chi+2S_{L \Lb} L\underline{L}\chi+S_{\Lb \Lb} L^2\chi+\frac{1}{2}(\partial^{i}\phi\partial^{j}\psi +\partial^{i}\psi\partial^{j}\phi ) \bar{\partial_{i}}\bar{\partial_{j}}\chi\\
&
\quad +(L\phi\partial^{j}\psi + L\psi\partial^{j}\phi)\bar{\partial_{j}}\underline{L}\chi+
(\underline{L}\phi\partial^{j}\psi + \underline{L}\psi\partial^{j}\phi) \bar{\partial_{j}}L\chi\\
&\quad +
\frac{1}{2r} \sum_{i=1}^{n}(\bar{\partial_{i}}\phi\bar{\partial_{i}}\psi + \bar{\partial_{i}}\psi\bar{\partial_{i}}\phi) \partial_{r}\chi
-\frac{1}{2r}( \partial^{i}\phi\partial^{j}\psi+ \partial^{i}\psi\partial^{j}\phi ) \omega_{i}\bar{\partial_{j}}\chi.
\end{align*}
A key observation underlying the proof is the following identity
$$\omega^{j}\bar{\partial_{j}}=\omega^{j}(\partial_{j}-\omega_{j}\partial_{r})=\partial_{r}-\partial_{r}=0.$$
Based on this cancellation, we can refine the last term in $S^{\alpha \beta}(\phi, \psi) \partial_{\alpha}\partial_{\beta}\chi $,
\begin{align*}
r^{-1} \partial^{i}\phi\partial^{j}\psi\omega_{i}\bar{\partial_{j}}\chi&=
r^{-1}\sum_{i,j}(\bar{\partial_{i}}+\omega_{i}\partial_{r})\phi(\bar{\partial_{j}}+\omega_{j}\partial_{r})
\psi\omega_{i}\bar{\partial_{j}}\chi\\
&=\sum_{j}r^{-1}\partial_{r}\phi\bar{\partial_{j}}\psi\bar{\partial_{j}}\chi.
\end{align*}
In the same way, there are
\begin{equation*}
L\phi\partial^{j}\psi\bar{\partial_{j}}\underline{L}\chi=\sum_{i}L\phi\bar{\partial_{i}}\psi\bar{\partial_{i}}\underline{L}\chi, \quad \underline{L}\phi\partial^{j}\psi\bar{\partial_{j}}L\chi=\sum_{i}\underline{L}\phi\bar{\partial_{i}}\psi\bar{\partial_{i}}L\chi,
\end{equation*}
and
\begin{align*}
 \partial^{i}\phi \partial^{j}\psi\bar{\partial_{i}}\bar{\partial_{j}}\chi 
&=\sum_{i,j}\bar{\partial_{i}}\phi\bar{\partial_{j}}\psi\bar{\partial_{i}}\bar{\partial_{j}}\chi
-\sum_{i,j}\bar{\partial_{i}}\phi\partial_{r}\psi(\bar{\partial_{i}}\omega_{j})\bar{\partial_{j}}\chi\\
&=\sum_{i,j}\bar{\partial_{i}}\phi\bar{\partial_{j}}\psi\bar{\partial_{i}}\bar{\partial_{j}}\chi
-\sum_{i}r^{-1}\bar{\partial_{i}}\phi\partial_{r}\psi\bar{\partial_{i}}\chi,
\end{align*}
where we have used $\bar{\partial_{i}}\omega_{j}=\partial_{i}\omega_{j}=r^{-1}(\delta_{ij}-\omega_{i}\omega_{j}).$
Thus the lemma follows by considering all the above calculations.
\end{proof}
\begin{remark}
From Lemma \ref{lemma-double}, we can see that the first two lines of \eqref{double-null} both have signature $0$ and they contain four derivatives, while in the last line, only three derivatives survive, and their signatures are $-1$, which is seemingly bad. However, these terms involve at least two angular derivatives, which can afford extra powers of $\delta$, and there is also an additional factor $r^{-1}\sim |\underline{u}|^{-1}$ which will provide more decay. Generally speaking, the double null form in the null coordinates still exhibits good structures. Now we can reinterpret the double null form as: Among the four derivatives or three derivatives in $Q(\phi,Q(\psi,\chi))$ (in null coordinates), there exist at least two good ($\{L,\nablaslash\}$) derivatives.
\end{remark}

Now we are ready to prove the \emph{a priori} estimate.
\subsection{Energy estimates}\label{subsec-E-E}
We remark that there is a hierarchy in the energy estimates. Namely, the estimates for $\underline{E}_{\leq N}$ should be done first, and then this information will help to estimate $E_{\leq N}$ later.
\subsubsection{Estimates for $\underline{E}_{\leq N}$}\label{subsec-E-N}
We apply the energy identity \eqref{2.19} to the higher order RME \eqref{3.18}, with $\phi$ being replaced by $\phi_{k}=\delta^{l}\partial^{l}\Gamma^{k-l}\phi$, and take $\xi=\tilde{\underline{L}}$ to obtain
\begin{equation}\label{3.22}
\begin{split}
& \int_{C_{u}}-\sqrt{g}P^{u}[\phi_{k},\tilde{\underline{L}}]r^{n-1}d\underline{u}d\sigma_{S^{n-1}}+
\int_{\underline{C}_{\underline{u}}}-\sqrt{g}P^{\underline{u}}[\phi_{k},\tilde{\underline{L}}]r^{n-1}dud\sigma_{S^{n-1}}\\
&=  \int_{\Sigma_1}-\sqrt{g}P^{t}[\phi_{k},\tilde{\underline{L}}]r^{n-1}drd\sigma_{S^{n-1}} -\iint_{\mathcal{D}_{u, \ub}}\partial_{\alpha}(\sqrt{g}P^{\alpha})d^{n+1}x.
\end{split}
\end{equation}

In view of Lemma \ref{lemma-energy-formula}, we know that the energies in the first line of \eqref{3.22} bound $\delta \underline{E}^{2}_{\leq N}(u,\underline{u})$.
Thus, \eqref{3.22} turns into the energy inequality
\begin{equation}\label{3.23}
\delta \underline{E}^{2}_{\leq N}(u,\underline{u})\lesssim \delta I_N^{2}(\psi_{0},\psi_{1})+\iint_{\D_{u, \ub}}|\partial_{\alpha}(\sqrt{g} P^{\alpha})|d^{n+1}x.
\end{equation}

In the following, we shall estimate the spacetime integral $\iint_{\D_{u, \ub}}|\partial_{\alpha}(\sqrt{g} P^{\alpha})|d^{n+1}x$ in detail.
Taking $\xi=\tilde{\underline{L}}$ in Lemma \ref{lem:2.4}, we derive the divergence of the associated current $P^{\alpha}$
\begin{equation}\label{3.24}
\partial_{\alpha}(\sqrt{g}P^{\alpha})=\sqrt{g}\left(\Box_{g(\p\phi)}\phi_{k}\tilde{\underline{L}}\phi_{k}+T^{\alpha}_{\beta} \cdot \partial_{\alpha}\tilde{\underline{L}}^{\beta}\right)-\frac{1}{2}
\tilde{\underline{L}}(\sqrt{g}g^{\gamma\rho})\partial_{\gamma}\phi_{k}\partial_{\rho}\phi_{k}.
\end{equation}

Next, we calculate the deformation tensor for $\tilde{\underline{L}}$ (similar calculation holds for $\tilde L$)
\begin{equation}\label{3.25}
\begin{split}
\partial_{\alpha}\tilde{\underline{L}}^{\beta}&=\p_{\alpha}\underline{L}^{\beta}+
\partial_{\alpha}(\underline{L}\phi)^2L^{\beta}+|\underline{L}\phi|^2\p_{\alpha}L^{\beta}\\
&=-\frac{1}{r}\gamma^{\beta}_{\alpha}+\partial_{\alpha}(\underline{L}\phi)^2L^{\beta}+\frac{|\underline{L}\phi|^2}{r}\gamma^{\beta}_{\alpha},
\end{split}
\end{equation}
where $\gamma_{\alpha \beta}$ denotes the induced metric from $\eta_{\mu\nu}$ on $S_{u,\underline{u}}$. Thus,
\begin{equation}\label{3.26}
T^{\alpha}_{\beta}\cdot\partial_{\alpha}\tilde{\underline{L}}^{\beta}=\left(g^{\alpha\gamma}\partial_{\gamma}\phi_{k}\partial_{\beta}\phi_{k}
-\frac{1}{2}g^{\gamma\delta}\partial_{\gamma}\phi_{k}\partial_{\delta}\phi_{k}\delta^{\alpha}_{\beta}\right)\partial_{\alpha}\tilde{\underline{L}}^{\beta} :=A+B,
\end{equation}
where $A=g^{\alpha\gamma}\partial_{\gamma}\phi_{k}\partial_{\beta}\phi_{k}\partial_{\alpha}\tilde{\underline{L}}^{\beta}$ and $B=-\frac{1}{2}g^{\gamma\delta}\partial_{\gamma}\phi_{k}\partial_{\delta}\phi_{k}\delta_{\beta}^{\alpha}\partial_{\alpha}\tilde{\underline{L}}^{\beta}$.  Further calculations show that
\begin{align}
A &=g^{\omega\gamma}\partial_{\gamma}\phi_{k}\partial_{\theta}\phi_{k}
\left(-\frac{1}{r}\gamma^{\theta}_{\omega}+\frac{1}{r}|\underline{L}\phi|^2\gamma^{\theta}_{\omega}\right) +g^{\alpha \gamma}\partial_{\gamma}\phi_{k}L\phi_{k} \p_\alpha (\underline{L}\phi)^2, \label{3.27} \\
B &=-\left(-\frac{n-1}{2r}+\frac{(n-1)|\underline{L}\phi|^2}{2r}+L\underline{L}\phi\underline{L}\phi\right)
g^{\gamma\delta}\partial_{\gamma}\phi_{k}\partial_{\delta}\phi_{k}. \label{3.28}
\end{align}
Then \eqref{3.26} could be written explicitly as
\begin{equation}\label{3.29}
\begin{split}
T^{\alpha}_{\beta}\cdot\partial_{\alpha}\tilde{\underline{L}}^{\beta}&=\left(\frac{1}{r}-\frac{|\underline{L}\phi|^2}{r}\right) \left(\frac{n-1}{2}
g^{\gamma\delta}\partial_{\gamma}\phi_{k}\partial_{\delta}\phi_{k}-g^{\omega\gamma}\partial_{\gamma}\phi_{k}\partial_{\omega}\phi_{k}\right)\\
& \quad +g^{\alpha \gamma}\partial_{\gamma}\phi_{k}L\phi_{k} \p_\alpha (\underline{L}\phi)^2 -
L\underline{L}\phi\underline{L}\phi g^{\gamma\delta}\partial_{\gamma}\phi_{k}\partial_{\delta}\phi_{k}\\
& := H_{1}+H_{2}+H_3.
\end{split}
\end{equation}

The error terms associated to the connection coefficients can also be calculated as below
\begin{eqnarray*}
\tilde{\underline{L}}(\sqrt{g}g^{\gamma\rho})\partial_{\gamma}\phi_{k}\partial_{\rho}\phi_{k}&=&
\frac{\tilde{\underline{L}}Q}{2\sqrt{g}}g^{\gamma\rho}\partial_{\gamma}\phi_{k}\partial_{\rho}\phi_{k}
-\frac{\tilde{\underline{L}}(\partial^{\gamma}\phi\partial^{\rho}\phi)}{\sqrt{g}}\partial_{\gamma}\phi_{k}\partial_{\rho}\phi_{k}\nonumber\\
&=&\frac{Q(\underline{L}\phi,\phi)+|\underline{L}\phi|^2Q(L\phi,\phi)}{\sqrt{g}}g^{\gamma\rho}\partial_{\gamma}\phi_{k}\partial_{\rho}\phi_{k}\nonumber\\
&-&\frac{2}{\sqrt{g}}(\partial^{\gamma}\underline{L}\phi+|\underline{L}\phi|^2\partial^{\gamma}L\phi)\partial^{\rho}\phi\partial_{\gamma}\phi_{k}\partial_{\rho}\phi_{k}\nonumber\\
&-&\frac{1}{\sqrt{g}}(\partial_{\alpha}\underline{L}^{\beta}+|\underline{L}\phi|^2\partial_{\alpha}L^{\beta})\partial^{\alpha}\phi\partial_{\beta}\phi
g^{\gamma\rho}\partial_{\gamma}\phi_{k}\partial_{\rho}\phi_{k}\nonumber\\
&+&\frac{2}{\sqrt{g}}(\partial^{\gamma}\underline{L}^{\beta}+|\underline{L}\phi|^2\partial^{\gamma}L^{\beta})\partial_{\beta}\phi\partial^{\rho}\phi
\partial_{\gamma}\phi_{k}\partial_{\rho}\phi_{k}.\nonumber
\end{eqnarray*}
Substituting $\p_\alpha \Lb^\beta = - \frac{1}{r} \gamma_\alpha^\beta$ and $\p_\alpha L^\beta = \frac{1}{r} \gamma_\alpha^\beta$ into the above formula, we have
\begin{eqnarray}\label{3.30}
\tilde{\underline{L}}(\sqrt{g}g^{\gamma\rho})\partial_{\gamma}\phi_{k}\partial_{\rho}\phi_{k}&=&
\frac{Q(\underline{L}\phi,\phi)+|\underline{L}\phi|^2Q(L\phi,\phi)}{\sqrt{g}}g^{\gamma\rho}\partial_{\gamma}\phi_{k}\partial_{\rho}\phi_{k}\nonumber\\
&-&\frac{2}{\sqrt{g}} (\partial^{\gamma}\underline{L}\phi+|\underline{L}\phi|^2\partial^{\gamma}L\phi)\partial^{\rho}\phi\partial_{\gamma}\phi_{k}\partial_{\rho}\phi_{k}\nonumber\\
&+&\frac{1-|\underline{L}\phi|^2}{r\sqrt{g}} \gamma_{\omega\theta}\partial^{\omega}\phi\partial^{\theta}\phi g^{\gamma\rho}\partial_{\gamma}\phi_{k}\partial_{\rho}\phi_{k}\nonumber\\
&-&\frac{2-2|\underline{L}\phi|^2}{r\sqrt{g}}\gamma_{\omega\theta}\partial^{\omega}\phi\partial^{\theta}\phi_{k}\partial^{\rho}\phi\partial_{\rho}\phi_{k}\nonumber\\
&:=&I_1+I_2+I_3+I_4.
\end{eqnarray}
We remark that these calculations are all done in $(x^\mu, \mu = 0, \cdots, n)$ coordinates, not the null coordinates, since we have used the wave coordinate gauge in the higher order RME \eqref{3.18}.
As a consequence, the spacetime integral in \eqref{3.23} could be classified as
\begin{equation}\label{3.31}
\iint_{\mathcal{D}_{u, \ub}}\sqrt{g}\Big[|\Box_{g(\p\phi)}\phi_{k}\tilde{\underline{L}}\phi_{k}|+\sum_{1 \leq i \leq 3}|H_{i}|+\sum_{1 \leq j \leq 4}|I_{j}|\Big]d^{n+1}x.
\end{equation}

For $H_{1}$, it could be further expanded as
\begin{equation}\label{H1-n-Lb}
\begin{split}
 H_1 &= \sqrt{g}\left(\frac{1}{r}- \frac{|\Lb\phi|^2}{r}\right) \Big[ \frac{n-1}{2} g^{ab}\partial_{a}\phi_{k}\partial_{b}\phi_{k} \\
  &\quad +(n-2) g^{\omega a}\partial_{a}\phi_{k}\partial_{\omega}\phi_{k} + \frac{n-3}{2} g^{\omega \theta}\partial_{\theta}\phi_{k}\partial_{\omega}\phi_{k} \Big],
\end{split}
\end{equation}
where $a, b$ denote the null coordinates $u, \,\ub$.
When $n=3$, the worst term in \eqref{H1-n-Lb} is the quadratic one $\frac{1}{r}L\phi_{k}\underline{L}\phi_{k}$. We can bound it by
\begin{equation}\label{3.32}
\begin{split}
& \quad \iint_{\mathcal{D}_{u, \ub}}\sqrt{g}\Big|\frac{1}{r}L\phi_{k}\underline{L}\phi_{k}\Big|d^{n+1}x \\
& \lesssim  \int_0^u \|L\phi_{k}\|^2_{L^{2}(C_{u^\prime})} d u^\prime \int_{\ub_0}^u \frac{1}{|\ub^\prime|^2} \|\underline{L}\phi_{k}\|^2_{L^{2}(\Cb_{\ub^\prime})} d \ub^\prime  \\
& \lesssim  u^{\frac{1}{2}} \delta M \ub_0^{-\frac{1}{2}}  \delta^{\frac{1}{2}} M \lesssim \delta^{2} M^2.
\end{split}
\end{equation}
When $n=2$, there is an additional quadratic term $\frac{1}{r}\nablaslash\phi_{k}\nablaslash\phi_{k}$ in \eqref{H1-n-Lb}, which could be estimated in the same way as \eqref{3.32}. In fact, for energy associated to $\tilde{\underline{L}}$, we only expect weak estimate, namely, $\underline{E}^2_{\leq N}\lesssim \delta$. Hence, although $\frac{1}{r}L\phi_{k}\underline{L}\phi_{k}$ and $\frac{1}{r}\nablaslash\phi_{k}\nablaslash\phi_{k}$ are quadratic, the inherently null structures help to provide adequate smallness to close the energy argument.

As for $H_i, \, i=2, 3$, we note that they all take the double null forms, having signatures $-1$ (see the definition in Section \ref{section null form}). In $H_{2}$, the leading term is $(L\phi_{k})^2\underline{L}\phi\underline{L}^2\phi$, which could be estimated as
\begin{equation}\label{3.33}
\begin{split}
& \quad \iint_{\mathcal{D}_{u, \ub}}|(L\phi_{k})^2\underline{L}\phi\underline{L}^2\phi|d^{n+1}x\\
& \lesssim \int_0^u \|L\phi_{k}\|^2_{L^{2}(C_{u^\prime})}  \|\underline{L}\phi \underline{L}^2\phi\|_{L^{\infty}(C_{u^\prime})} d u^\prime  \\
& \lesssim \int_0^u (\delta M)^2  \delta^{-1} M^2 d u^\prime \lesssim \delta^{2} M^4.
\end{split}
\end{equation}
By a similar procedure, we perform $L^2$ on the terms involving the highest order derivative $\p \phi_k$, and $L^\infty$ on the terms of first or second order derivatives, then
\begin{equation}\label{3.34}
\iint_{\mathcal{D}_{u, \ub}}\sqrt{g}(|H_{2}| + |H_{3}| )d^{n+1}x \lesssim \delta^{2} M^4.
\end{equation}

The $I_{1}$ and $I_{2}$ having signatures $-1$, also take the null forms. Therefore, analogous to $H_{2}$ and $H_{3}$, we have
\begin{equation}\label{3.36}
\iint_{\mathcal{D}_{u, \ub}}\sqrt{g}(|I_{1}|+|I_{2}|)d^{n+1}x \lesssim \delta^{2}M^4.
\end{equation}

For $I_{3}$ and $I_{4}$, with the coefficient $\frac{1-|\underline{L}\phi|^2}{r\sqrt{g}} \sim |\ub|^{-1}$, they have signatures $0$, which are good enough to afford sufficient smallness. Typically, they could be controlled as
\begin{equation}\label{3.37}
\begin{split}
& \quad \iint_{\mathcal{D}_{u, \ub}}\sqrt{g} \left( |I_3|+|I_4| \right) d^{n+1}x \\
& \lesssim \|\nablaslash\phi L\phi\|_{ L^{\infty} (\mathcal{D}_{u, \ub} )} \left( \int_{0}^{u} \|\nablaslash\phi_{k}\|^2_{L^{2}(C_{u^\prime})} d u^\prime \right)^{\frac{1}{2}} \left( \int_{\ub_0}^{\ub} \frac{1}{\underline{u}^{\prime 2} }  \|\underline{L}\phi_{k}\|^2_{L^{2}(\Cb_{\ub^\prime})}  d \ub^\prime \right)^{\frac{1}{2}} \\
& \lesssim \delta^{\frac{3}{4}} M^2 u^{\frac{1}{2}} \delta^{\frac{1}{2}} M \delta^{\frac{1}{2}} M \lesssim  \delta^{2+\frac{1}{4}} M^4.
\end{split}
\end{equation}

At last, we consider the source term $\Box_{g(\p\phi)}\phi_{k} \cdot \tilde{\underline{L}}\phi_{k}$. By Lemma \ref{lemma-high-order-eq}, we have
\begin{equation}\label{3.39}
\begin{split}
&\iint_{\mathcal{D}_{u, \ub}}\sqrt{g}|\Box_{g(\p\phi)}\phi_{k}\tilde{\underline{L}}\phi_{k}|d^{n+1}x=\iint_{\mathcal{D}_{u, \ub}}\sqrt{g}|(1+Q)^{-1} g^{\mu \nu} \partial_{\nu}Q \partial_{\mu} \phi_k \\
 & \quad +\sum_{ \substack{i_1+\cdots +i_j \leq k, \\ i_1,\cdots, i_j <k, \\ 3\leq j\leq 2k+3}} F_{i_1, \cdots, i_j}(Q)Q( \phi_{i_{1}}, Q( \phi_{i_{2}},  \phi_{i_{3}}) ) \cdots Q( \phi_{i_{j-1}},  \phi_{i_{j}})\\
 &\quad \quad \quad \quad \quad \quad  \times Q(Z^{i_4} \phi,Z^{i_5} \phi) \cdots Q( Z^{i_{k-1}}\phi, Z^{i_k} \phi)
  \tilde{\underline{L}}\phi_{k}|d^{n+1}x\\
&:=J_{1}+J_{2}.
\end{split}
\end{equation}

Obviously, the leading term in $J_{1}$ is
\begin{equation}\label{3.40}
 \iint_{\mathcal{D}_{u, \ub}}|\eta^{\mu \nu} \partial_{\nu}Q \partial_{\mu} \phi_k \underline{L}\phi_{k}|d^{n+1}x,
\end{equation}
where $\eta^{\mu\nu}\partial_{\nu}Q\partial_{\mu}\phi_{k}$ takes the double null form. In view of Section \ref{Double null condition in null coordinates}, $\eta^{\mu\nu}\partial_{\nu}Q\partial_{\mu}\phi_{k}$ or equivalently $Q(\phi_{k},Q(\phi,\phi))$, contains two types:
\begin{align}\label{J-11}
J_{11}=
\left\{\begin{array}{ccc}
\underline{L}\phi L^2\phi \underline{L}\phi_{k},&  L\underline{L}\phi L\phi \underline{L}\phi_{k},& L\nablaslash\phi\nablaslash\phi\underline{L}\phi_{k}\\
\underline{L}L\phi \underline{L}\phi L\phi_{k},& \underline{L}^2\phi L\phi L\phi_{k},&
\underline{L}\nablaslash\phi\nablaslash\phi L\phi_{k}\\
\nablaslash\underline{L}\phi L\phi \nablaslash\phi_{k},&\nablaslash L\phi \underline{L}\phi \nablaslash\phi_{k},& \nablaslash^2\phi\nablaslash\phi\nablaslash\phi_{k}
\end{array}\right\},
\end{align}
 which we shall refer to as the $J_{11}$ type, and the $J_{12}$ type is
\begin{equation}\label{J-12}
J_{12}= \left\{\begin{array}{cc}
r^{-1}(\nablaslash\phi)^2(L\phi_k-\underline{L}\phi_{k}) ,&r^{-1}(L\phi-\underline{L}\phi) \nablaslash\phi
\nablaslash\phi_{k}
\end{array}\right\}.
\end{equation}

Note that, $J_{11}$ has signature $0$. We have the estimate for the leading term in $J_{11} \cdot \Lb\phi_k$,
\begin{equation}\label{3.41}
\begin{split}
&\quad \iint_{\mathcal{D}_{u, \ub}}|\Lb^2\phi L\phi L\phi_k \underline{L}\phi_{k} |d^{n+1}x\\
& \lesssim  \left( \int_{0}^{u} \|L\phi_{k}\|^2_{L^{2}(C_{u^\prime})} d u^\prime \right)^{\frac{1}{2}} \left( \int_{\ub_0}^{\ub}  \|\underline{L}^2 \phi L\phi\|^2_{ L^{\infty} (\Cb_{\ub^\prime})} \|\underline{L}\phi_{k}\|^2_{L^{2}(\Cb_{\ub^\prime})}  d \ub^\prime \right)^{\frac{1}{2}} \\
& \lesssim  u^{\frac{1}{2}} \delta M \left( \int_{\ub_0}^{\ub}  \left(  \delta^{-1} M \delta M |\ub^\prime|^{2d_{n}- \frac{1}{2}} \right)^2  \delta M^2  d \ub^\prime \right)^{\frac{1}{2}} \lesssim  \delta^{2} M^4.
\end{split}
\end{equation}
The other terms in $J_{11} \cdot \Lb\phi_k$ can be estimated similarly.

For $J_{12}$, it has an additional factor $r^{-1} \sim |\ub|^{-1}$  and it contains two angular derivatives, which will bring extra $\delta$ power. This can be verified sketchily from the following comparison: Both of $L\phi\underline{L}\phi$ and $|\nablaslash\phi|^2$ have signatures 0. However, $|\nablaslash\phi|^2$ is smaller in the sense that it possess more power of $\delta$,
\begin{equation}\label{compare-LLb-angular2}
|L\phi \Lb\phi| \lesssim \delta M^2 |\ub|^{2d_n-\frac{1}{2}}, \quad |\nablaslash \phi|^2 \lesssim \delta^{\frac{3}{2}} M^2|\ub|^{2d_n-1}.
\end{equation}
The worst term in $J_{12} \cdot \Lb\phi_k$ can be estimated as
\begin{equation}\label{r-singular}
\begin{split}
&\quad \iint_{\mathcal{D}_{u, \ub}}|r^{-1}\Lb\phi \nablaslash\phi \nablaslash\phi_k \underline{L}\phi_{k} | d^{n+1}x\\
& \lesssim \| \Lb\phi \nablaslash\phi\|_{ L^{\infty} (\mathcal{D}_{u, \ub} )} \left( \int_{0}^{u} \|\nablaslash\phi_{k}\|^2_{L^{2}(C_{u^\prime})} d u^\prime \right)^{\frac{1}{2}} \left( \int_{\ub_0}^{\ub} \frac{1}{\underline{u}^{\prime 2} }  \|\underline{L}\phi_{k}\|^2_{L^{2}(\Cb_{\ub^\prime})}  d \ub^\prime \right)^{\frac{1}{2}}  \\
& \lesssim \delta^{\frac{3}{4}} M^2 u^{\frac{1}{2}} \delta^{\frac{1}{2}} M \delta^{\frac{1}{2}} M \lesssim  \delta^{2+\frac{1}{4}} M^4.
\end{split}
\end{equation}
After estimating term by term, we conclude
\begin{equation}\label{3.42}
\iint_{\mathcal{D}_{u, \ub}}\sqrt{g}|J_{1}|d^{n+1}x\lesssim \delta^{2} M^4.
\end{equation}

For the lower order term $J_{2}$, there is also double null structure. Without loss of generality, we assume $i_{1}\leq i_{2}\leq i_{3}$, then
\begin{equation}\label{3.43}
\iint_{\mathcal{D}_{u, \ub}}\sqrt{g}|J_{2}|d^{n+1}x\lesssim\iint_{\mathcal{D}_{u, \ub}}|Q(\phi_{i_{1}},Q(\phi_{i_{2}},\phi_{i_{3}}))\underline{L}\phi_{k}|d^{n+1}x,
\end{equation}
where $i_{1}+i_{2}+i_{3} \leq k, \, i_1, i_2, i_3 <k$. Thus, there must be $i_{1}\leq i_{2}<\frac{k}{2}$. We know that $\frac{k}{2}\leq N-2$, since $N\geq 4$ and $k\leq N$. Hence, $i_1 \leq i_2 \leq N-3$, and we can apply the $L^{\infty}$ norm to $\p\phi_{i_{j}}$ or $\p^2 \phi_{i_j}$, $j=1,2$, according to Lemma \ref{lemma-apriori-estimate}.
As before, $Q(\phi_{i_{1}},Q(\phi_{i_{2}},\phi_{i_{3}}))$ can be also split into the $J_{11}$ type \eqref{J-11} and the $J_{12}$ type  \eqref{J-12} as well. The worst term in $J_{2}$ could be estimated by
\begin{equation}\label{3.44}
\begin{split}
& \quad \iint_{\mathcal{D}_{u, \ub}}|L\phi_{i_{1}}\underline{L}^2 \phi_{i_{2}} L\phi_{i_{3}}\underline{L}\phi_{k}|d^{n+1}x\\
&  \lesssim  \left( \int_{0}^{u} \|L\phi_{i_3}\|^2_{L^{2}(C_{u^\prime})} d u^\prime \right)^{\frac{1}{2}} \left( \int_{\ub_0}^{\ub} \|L \phi_{i_{1}} \underline{L}^2 \phi_{i_{2}}\|^2_{ L^{\infty} (\Cb_{\ub^\prime})} \|\underline{L}\phi_{k}\|^2_{L^{2}(\Cb_{\ub^\prime})}  d \ub^\prime \right)^{\frac{1}{2}}  \\
& \lesssim  u^{\frac{1}{2}} \delta M \left( \int_{\ub_0}^{\ub}  \left(  \delta M \delta^{-1} M |\ub^\prime|^{2d_{n}- \frac{1}{2}} \right)^2  \delta M^2  d \ub^\prime \right)^{\frac{1}{2}} \lesssim  \delta^{2} M^4.
\end{split}
\end{equation}
The estimate for the other terms in $J_2$ can be carried out in an analogous way.
Finally, we conclude
\begin{equation}\label{3.45}
\iint_{\mathcal{D}_{u, \ub}}\sqrt{g}|J_{2}|d^{n+1}x\lesssim \delta^{2} M^4.
\end{equation}

\begin{remark}\label{rk-J-11-J-12}
In estimating the source terms $J_1, \, J_{2}$, the double null forms containing the $J_{11}$ and $J_{12}$ types are cubic, which infers more decay rates in $\ub$. This is crucial in proving the uniform boundness of energy in the $n=2$ case.
\end{remark}

In summary, we have accomplished
\begin{equation}\label{3.46}
\delta \underline{E}^2_{\leq N}(u,\underline{u})\lesssim \delta I_N^{2}(\psi_{0},\psi_{1})+\delta^{2}M^4,
\end{equation}
which is equivalent to
\begin{equation}\label{3.47}
\underline{E}^2_{\leq N}\lesssim I_N^{2}(\psi_{0},\psi_{1})+\delta M^4.
\end{equation}

\subsubsection{Estimates for $E_{\leq N}$}\label{subsec-Eb-N}
In this section, we apply the energy identity \eqref{2.19} for the higher order RME \eqref{3.18}, with $\phi$ replaced by $\phi_{k}=\delta^{l}\partial^{l}\Gamma^{k-l}\phi$, and take $\xi=\tilde{L}$ to obtain the identity
\begin{equation}\label{3.48}
\begin{split}
& \int_{C_{u}}-\sqrt{g}P^{u}[\phi_{k},\tilde{L}]r^{n-1}d\underline{u}d\sigma_{S^{n-1}}+
\int_{\underline{C}_{\underline{u}}}-\sqrt{g}P^{\underline{u}}[\phi_{k},\tilde{L}]r^{n-1}dud\sigma_{S^{n-1}}\\
&=  \int_{\Sigma_1}-\sqrt{g}P^{t}[\phi_{k},\tilde{L}]r^{n-1}dr d\sigma_{S^{n-1}} -\iint_{\mathcal{D}_{u, \ub}}\partial_{\alpha}(\sqrt{g}P^{\alpha})d^{n+1}x,
\end{split}
\end{equation}
where the energy in the first line is equivalent to $\delta^{2}E^{2}_{\leq N}(u,\underline{u})$.

By an analogous argument as that for $\xi=\tilde{\Lb}$, the divergence of the current associated to $\tilde{L}$ could be calculated as below
\begin{equation}\label{eq-div-tdL}
\begin{split}
\partial_{\alpha}(\sqrt{g}P^{\alpha})&=\sqrt{g}\left(\Box_{g(\p\phi)}\phi_{k}\tilde{L}\phi_{k}+T^{\alpha}_{\beta} \cdot \partial_{\alpha}\tilde{L}^{\beta}\right)
-\frac{1}{2}
\tilde{L}(\sqrt{g}g^{\gamma\rho})\partial_{\gamma}\phi_{k}\partial_{\rho}\phi_{k}\\
&:=S+\sum_{1 \leq i\leq 3}M_{i}+\sum_{1 \leq j\leq 4}N_{j},
\end{split}
\end{equation}
where $S$ is the source term
\begin{equation}\label{Def-tdL-source}
S \doteq \sqrt{g}\Box_{g(\p\phi)}\phi_{k} \cdot \tilde{L}\phi_{k},
\end{equation}
and $\sum_{i=1}^3 M_{i}$ coming from the deformation tensor of $\tilde L$ is defined by
\begin{equation}\label{Def-tdL-M}
\begin{split}
\sum_{i=1}^3 M_{i}
&\doteq \sqrt{g}\Big[\left(-\frac{1}{r}+\frac{|L\phi|^2}{r}\right)\left(\frac{n-1}{2}
g^{\gamma\delta}\partial_{\gamma}\phi_{k}\partial_{\delta}\phi_{k}-g^{\omega\gamma}\partial_{\gamma}\phi_{k}\partial_{\omega}\phi_{k}\right)\\
& \quad + g^{\alpha \gamma}\partial_{\gamma}\phi_{k}\underline{L}\phi_{k} \p_\alpha (L\phi)^2  -L\underline{L}\phi L\phi g^{\gamma\delta}\partial_{\gamma}\phi_{k}\partial_{\delta}\phi_{k}\Big],
\end{split}
\end{equation}
while the error term $\sum_{j=1}^{4}N_{j}$ associated to the connection coefficients is given by
\begin{equation}\label{Def-tdL-N}
\begin{split}
\sum_{j=1}^4N_{j} &\doteq
-\frac{1}{2}\Big[\frac{Q(L\phi,\phi)+|L\phi|^2Q(\underline{L}\phi,\phi)}{\sqrt{g}}g^{\gamma\rho}\partial_{\gamma}\phi_{k}\partial_{\rho}\phi_{k}\\
&\,\,\,\,\,\,\,\,\,\,\,\,\,\,\,\,\,\,\,\,\,\,\,\,\,\,\,\,\,\,\,\,\,\,\,\,\,\,\,\,\,\,\,\,\,\,\,\,\,\,\, -\frac{2}{\sqrt{g}}(\partial^{\gamma}L\phi+|L\phi|^2\partial^{\gamma}\underline{L}\phi)\partial^{\rho}\phi\partial_{\gamma}\phi_{k}\partial_{\rho}\phi_{k}\\
& \quad - \frac{1-|L\phi|^2}{r\sqrt{g}} \gamma_{\omega\theta}\partial^{\omega}\phi\partial^{\theta}\phi g^{\gamma\rho}\partial_{\gamma}\phi_{k}\partial_{\rho}\phi_{k} +\frac{2-2|L\phi|^2}{r\sqrt{g}}\gamma_{\omega\theta}\partial^{\omega}\phi\partial^{\theta}\phi_k\partial^{\rho}\phi\partial_{\rho}\phi_{k}\Big].
\end{split}
\end{equation}

As before, we start with estimates for $M_{i}, \, i=1,\cdots,3$.

First of all, $M_{1}$ takes the following form
\begin{equation}\label{M1-n-Lb}
\begin{split}
 M_1 &= \sqrt{g}\left(-\frac{1}{r}+\frac{|L\phi|^2}{r}\right) \Big[ \frac{n-1}{2} g^{ab}\partial_{a}\phi_{k}\partial_{b}\phi_{k} \\
  & \quad +(n-2) g^{\omega a}\partial_{a}\phi_{k}\partial_{\omega}\phi_{k} + \frac{n-3}{2} g^{\omega \theta}\partial_{\theta}\phi_{k}\partial_{\omega}\phi_{k} \Big],
\end{split}
\end{equation}
where $a, \, b$ denote the null coordinates $(u, \ub)$.

When $n=3$, $M_1$ has the leading quadratic term $\frac{1}{r}L\phi_{k}\underline{L}\phi_{k}$, for which we shall make use of the previous result for $\Eb^2_{\leq k}(u, \ub)$ \eqref{3.46} to deduce
\begin{equation}\label{3.50}
\begin{split}
&\iint_{\mathcal{D}_{u, \ub}} |\frac{1}{r}L\phi_{k}\underline{L}\phi_{k}|d^{n+1}x\\
\lesssim &\iint_{\mathcal{D}_{u, \ub}}\frac{1}{\delta}|L\phi_{k}|^2d^{n+1}x+\iint_{\mathcal{D}_{u, \ub}}\frac{\delta}{|\underline{u}|^2}|\underline{L}\phi_{k}|^2d^{n+1}x\\
\lesssim& \int_{u_{0}}^{u}\frac{1}{\delta}\|L\phi_{k}\|^2_{L^{2}(C_{u^\prime})}d u^\prime + \int_{\underline{u}_{0}}^{\underline{u}}\frac{\delta}{|\underline{u}^\prime |^2}
\|\underline{L}\phi_{k}\|^2_{L^{2}(\underline{C}_{\underline{u^\prime }})}d \underline{u}^\prime\\
\lesssim &\int_{u_{0}}^{u}\delta E^2_{\leq k}(u^\prime) d u^\prime +  \delta (\delta I_N^2 (\psi_0, \psi_1)  +  \delta^{2} M^4).
\end{split}
\end{equation}
Here
\begin{equation}\label{def-sup-E}
E^2_{\leq k}(u^\prime)= \sup_{\ub} E^2_{\leq k}(u^\prime, \ub).
\end{equation}
The other terms in $M_1$ taking the null forms can be estimated straightforwardly.
\begin{remark}\label{re-hierarchy}
In \eqref{3.50}, if we estimate the leading term $\frac{1}{r}L\phi_{k}\underline{L}\phi_{k}$ in $M_{1}$ directly as in \eqref{3.32}, we would have $$\iint_{\mathcal{D}_{u, \ub}} |\frac{1}{r}L\phi_{k}\underline{L}\phi_{k}|d^{n+1}x \lesssim \delta^{2} M^2,$$ which is not good enough to close the bootstrap argument. Indeed, this is the borderline case for which the hierarchy of energy estimates (see \eqref{3.50}) should be employed.
\end{remark}

When $n=2$,
besides the leading quadratic term $\frac{1}{r}L\phi_{k}\underline{L}\phi_{k}$ which can be estimated in an analogous way as \eqref{3.50}, a quadratic term $\frac{\sqrt{g}}{2r} g^{\omega \theta}\p_{\omega}\phi_k \p_{\theta} \phi_k$ with a favourable sign in the energy identity \eqref{3.48} also occurs in $M_1$. Hence, if we move this term to the left hand side, it will contribute a positive term  $\iint_{\mathcal{D}_{u, \ub}} \frac{\sqrt{g}}{2r} g^{\omega \theta}\p_{\omega}\phi_k \p_{\theta} \phi_k d^{2+1}x$ to the spacetime integral. In other words, $L$ also serves as a Morawetz multiplier and provides an additional integrated decay estimate. This is one of the key points in proving the uniform boundness of the energy for the $n=2$ case.

$M_2$ and $M_3$ have signatures $+1$, and there are the double null structures.
Hence the estimates are all straightforward. We only pick out the leading terms in $M_2$,
\begin{equation}\label{3.51}
\begin{split}
&\quad \iint_{\mathcal{D}_{u, \ub}} | L\phi_{k}\underline{L}\phi_{k} \Lb L \phi L \phi|d^{n+1}x \\
& \lesssim \left( \int_{0}^{u} \|L\phi_{k}\|^2_{L^{2}(C_{u^\prime})} d u^\prime \right)^{\frac{1}{2}} \left( \int_{\ub_0}^{\ub}  \|\underline{L} L \phi L\phi\|^2_{ L^{\infty} (\Cb_{\ub^\prime})}  \|\underline{L}\phi_{k}\|^2_{L^{2}(\Cb_{\ub^\prime})}  d \ub^\prime \right)^{\frac{1}{2}}  \\
& \lesssim  u^{\frac{1}{2}} \delta M \left( \int_{\ub_0}^{\ub}  \left( M \delta M |\ub^\prime|^{2d_{n}-1} \right)^2  \delta M^2  d \ub^\prime \right)^{\frac{1}{2}} \lesssim  \delta^{3} M^4.
\end{split}
\end{equation}
In general, we arrive at
\begin{equation}\label{3.53}
\iint_{\mathcal{D}_{u, \ub}}\left( |M_{2}| +|M_3| \right)d^{n+1}x\lesssim \delta^{3}M^4.
\end{equation}

For the connection coefficients $\sum_{j=1}^4N_{j}$, the first two terms $N_{1}$ and $N_{2}$ with signatures $+1$, satisfy the null conditions. Thus, similar to $M_{2}, \, M_3$, we have
\begin{equation}
\iint_{\mathcal{D}_{u, \ub}} \left( |N_{1}|+|N_{2}| \right) d^{n+1}x\lesssim \delta^{3}M^4.
\end{equation}
As for $N_{3}$ and $N_{4}$, the coefficient admits $\frac{1-|L\phi|^2}{r\sqrt{g}} \sim |\ub|^{-1}$. Although $N_{3}$ and $N_{4}$ have signatures $0$, which are seemingly not as good as $N_{1}$ and $N_{2}$, they involve two angular derivative terms, which afford extra smallness (see \eqref{compare-LLb-angular2}). They can be treated in the same way as the $J_{12}$ type \eqref{J-12} of the double null forms. In a word, they can be  typically estimated as
\begin{equation}\label{3.55}
\iint_{\mathcal{D}_{u, \ub}} \left( |N_{3}|+|N_{4}| \right)d^{n+1}x 
\lesssim \delta^{\frac{7}{2}}M^4.
\end{equation}

At last, we turn to the source term $S$, which takes the null form.  As \eqref{3.39} in Section \ref{subsec-E-N}, we decompose $S=S_{1}+S_{2}$. Each of the $S_1$ and $S_2$ taking the null form can be again split into the $J_{11}$ \eqref{J-11} and $J_{12}$ \eqref{J-12} types.  For the $J_{11}$ type in $S_{1}$, the leading term is
\begin{equation}\label{3.56}
 \iint_{D} \left( |\underline{L}^2\phi L\phi L \phi_{k}| + |\underline{L}L\phi L\phi \Lb\phi_k| \right) | L\phi_{k}|d^{n+1}x\lesssim \delta^{3} M^4.
\end{equation}
The $J_{12}$ type in $S_{1}$ could be estimated in the same way as \eqref{3.55}, so that they are bounded by $\delta^{\frac{7}{2}} M^4$. The same argument also applies to $S_{2}$, thus we have
\begin{equation}\label{3.57}
\iint_{\mathcal{D}_{u, \ub}} \left( |S_1| + |S_{2}| \right) d^{n+1}x\lesssim \delta^{3} M^{4}.
\end{equation}

As in Remark \ref{rk-J-11-J-12}, we also emphasize that the cubic null form $Q(\phi_{i_{1}},  Q(\phi_{i_{2}},\phi_{i_{3}}))$ provides sufficient decay rates in $\ub$, which is needed in proving the uniform energy for the $n=2$ case.

In summary, we conclude
\begin{equation}\label{3.58}
\delta^{2}E_{\leq N}^2 (u, \ub)\lesssim \delta^{2}I_N^{2}(\psi_{0},\psi_{1})+\int_{0}^{u}\delta  E^2_{\leq N}( u^\prime )d u^\prime +\delta^{3} M^4.
\end{equation}
Particularly, in the $n=2$ case, we obtain
\begin{equation}\label{3.58-n-2}
\begin{split}
& \quad \delta^{2}E_{\leq N}^2 (u, \ub) + \iint_{\mathcal{D}_{u, \ub}} \frac{\sqrt{g}}{2r} g^{\omega \theta}\p_{\omega}\phi_k \p_{\theta} \phi_k d^{2+1}x  \\
& \lesssim \delta^{2}I_N^{2}(\psi_{0},\psi_{1})+\int_{0}^{u}\delta E^2_{\leq N}(u^\prime)d u^\prime +\delta^{3} M^4.
\end{split}
\end{equation}

Taking the $\sup$ on $\ub$ (see \eqref{def-sup-E}), we further  derive
\begin{equation}\label{3.59}
E^2_{\leq N}(u) \leq I_N^2(\psi_{0},\psi_{1})+\int_{0}^{u} \delta^{-1}E^2_{\leq N}( u^\prime )d u^\prime +\delta M^4.
\end{equation}
An application of the Gr\"{o}nwall inequality yields
\begin{equation}\label{3.60}
E^2_{\leq N}(u)\lesssim I_N^2(\psi_{0},\psi_{1})+\delta M^4.
\end{equation}

\begin{remark}\label{rk-quailinear-bdry}
Let us explain  what will happen if we ignore the geometry of membrane and why $L, \Lb$ do not qualify for the multipliers.
We rewrite the RME \eqref{3.19} as
\begin{equation}\label{RME-conformal-formular}
\p_\mu\left(\frac{\p^\mu\phi}{\sqrt{1+Q}} \right)=0,
\end{equation}
and the high order version takes similar form
\begin{equation}\label{RME-conformal-formular-high-order}
\begin{split}
\p_\mu\left(\frac{\p^\mu Z^N\phi}{\sqrt{1+Q}} \right) &= \frac{Q(\phi, Q(Z^{N}\phi, \phi) )}{\sqrt{1+Q}^3} +   \frac{Q(\phi, Q(\phi, \phi)) Q(Z^{N}\phi, \phi) }{\sqrt{1+Q}^5} \\
&+\sum_{ \substack{i_1+\cdots +i_k \leq N, \\ i_1,\cdots, i_k <N, \\ 3\leq k\leq 2N+3}} F(Q)Q(Z^{i_1} \phi, Q( Z^{i_2}\phi, Z^{i_3} \phi) ) \cdots Q( Z^{i_k-1}\phi, Z^{i_k} \phi)\\
&\quad \quad \quad \quad \quad \quad  \times Q(Z^{i_4} \phi,Z^{i_5} \phi) \cdots Q( Z^{i_{k-1}}\phi, Z^{i_k} \phi).
\end{split}
\end{equation}
In \eqref{RME-conformal-formular-high-order}, $\frac{Q(\phi, Q(Z^{N}\phi, \phi) )}{\sqrt{1+Q}^3}$ is a quasilinear term, which could be treated through integration by parts.
The corresponding energy momentum tensor is (associated with the flat metric $\eta_{\mu \nu}$)
\begin{equation}\label{energy momentum tensor-conformal}
T_{\mu\nu}(\varphi) =  \frac{\p_\mu \varphi}{\sqrt{1+Q}} \cdot \frac{\p_\nu \varphi}{\sqrt{1+Q}} - \frac{1}{2}\eta_{\mu\nu}  \frac{\eta^{\sigma \rho}\p_\rho \varphi \p_\sigma \varphi}{\sqrt{1+Q}}.
\end{equation}
Letting $\phi_N=Z^N\phi$, we calculate that the divergence
\begin{equation}\label{RME-conformal-formular-high-order1}
\begin{split}
\p^\mu T_{\mu \nu}(\phi_N) & = \p_\mu \left(\frac{\p^\mu \phi Q(\phi_N, \phi) \p_\nu\phi_N}{(1+Q)^2}  \right) - \p_\nu \left(\frac{ Q^2(\phi_N, \phi)}{2(1+Q)^2}  \right)\\
& \quad + \text{nonlinear error terms},
\end{split}
\end{equation}
where the first line on the right hand side of the equality is due to the quasilinear term.
Taking $L$ and $\Lb$ as the multipliers respectively, the divergence theorem leads to  the energy identities
\begin{equation}\label{energy-id-L-conformal}
\begin{split}
& \int_{\Cb_{\ub}} \frac{|\nablaslash\phi_N|^2}{1+Q} - \frac{Q^2(\phi_N, \phi)}{2(1+Q)^2} - \frac{\Lb\phi Q(\phi_N, \phi)L\phi_N}{(1+Q)^2}  \\
&\quad + \int_{C_u} \frac{|L\phi_N|^2}{1+Q} - \frac{L\phi Q(\phi_N, \phi)L\phi_N}{(1+Q)^2} = I_N(\psi_0, \psi_1) + \mathcal{S}_L,
\end{split}
\end{equation}
\begin{equation}\label{energy-id-Lb-conformal}
\begin{split}
&\int_{C_u} \frac{|\nablaslash\phi_N|^2}{1+Q} - \frac{Q^2(\phi_N, \phi)}{2(1+Q)^2} -  \frac{L\phi Q(\phi_N, \phi) \Lb\phi_N}{(1+Q)^2} \\
& \quad +   \int_{\Cb_{\ub}}  \frac{|\Lb\phi_N|^2}{1+Q}  - \frac{\Lb\phi Q(\phi_N, \phi) \Lb\phi_N}{(1+Q)^2}   = I_N(\psi_0, \psi_1) + \mathcal{S}_{\Lb},
\end{split}
\end{equation}
where $I_N(\psi_0, \psi_1)$ is the corresponding datum and $\mathcal{S}_L, \, \mathcal{S}_{\Lb}$ denote integrals on the domain $\mathcal{D}_{u, \ub}$.
Now, we see that, the left hand side of the equality \eqref{energy-id-L-conformal} which is supposed to be the energy takes the form of
\begin{equation}\label{boundary-L-conformal}
\begin{split}
 & \int_{C_u} \frac{|L\phi_N|^2}{1+Q}  + \int_{\Cb_{\ub}}  \frac{|\nablaslash\phi_N|^2}{1+Q}  -  \frac{|\nablaslash\phi \nablaslash \phi_N|^2}{2(1+Q)^2}   + \frac{3|\Lb\phi|^2 |L\phi_N|^2 }{8(1+Q)^2}  -  \frac{|L\phi|^2 |\Lb\phi_N|^2}{8(1+Q)^2} + \cdots,
\end{split}
\end{equation}
while the one in \eqref{energy-id-Lb-conformal} could be explicitly written as
\begin{equation}\label{boundary-Lb-conformal}
\begin{split}
&\int_{\Cb_{\ub}}  \frac{|\Lb\phi_N|^2}{1+Q} + \int_{C_u}  \frac{|\nablaslash\phi_N|^2}{1+Q} -  \frac{|\nablaslash\phi \nablaslash \phi_N|^2}{2(1+Q)^2}  + \frac{3|L\phi|^2 |\Lb\phi_N|^2 }{8(1+Q)^2}  - \frac{|\Lb\phi|^2 |L\phi_N|^2}{8(1+Q)^2} + \cdots.
\end{split}
\end{equation}
Here we use $\cdots$ to denote terms with indeterminate signs.
Due to the presence of the non-positive terms such as $-\int_{\underline{C}_{\underline{u}}}\frac{|L\phi|^2 |\Lb\phi_{N}|^2}{8(1+Q)^2}$ and $-\int_{C_{u}} \frac{|\Lb\phi|^2 |L\phi_N|^2 }{8(1+Q)^2}$ in \eqref{boundary-L-conformal}, \eqref{boundary-Lb-conformal},  we know that both of \eqref{boundary-L-conformal} and \eqref{boundary-Lb-conformal} can be negative themselves, although it is possible to prove that \eqref{boundary-L-conformal} + \eqref{boundary-Lb-conformal} will provide coercive energies. This indicates that for the large data problem we are considering, hierarchies of energy estimates (namely, one should first use \eqref{energy-id-L-conformal} to prove the energy bound for \eqref{boundary-L-conformal}, and then utilize this result to prove the energy bound for \eqref{boundary-Lb-conformal} via \eqref{energy-id-Lb-conformal}) are not allowed. On the other hand, as illustrated in Remark \ref{re-hierarchy}, it is impossible to do the energy estimates for the RME (with large data) without proposing the hierarchies of energy estimates. Consequently, we can not take $L, \, \Lb$ as multipliers for the RME.
\end{remark}
\subsubsection{Global solution in Region {\rm II}} Now we can show the global existence of smooth solution of relativistic membrane equation in Region {\rm II}. By definition of $\lesssim$, there exists a positive constant $C$, such that, by \eqref{3.47} and \eqref{3.60}
\begin{equation}
\underline{E}_{N}+E_{N}\leq C(I_{N}(\psi_{0},\psi_{1})+\delta^{\frac{1}{2}} M^2),
\end{equation}
under the bootstrap assumption that $\underline{E}_{N}^2+E_{N}^{2}\leq M$ with a large constant $M$. By the method of continuity, it suffices to show that there holds
\begin{equation}
\underline{E}_{N}+E_{N}\leq \frac{1}{2}M,
\end{equation}
for all $(u,\underline{u})$ in Region {\rm II}. The above inequality can be easily obtained if we choose $M=4CI_{N}(\psi_{0},\psi_{1})$ and $\delta_{0}=\left(\frac{1}{16C^2I_{N}(\psi_{0},\psi_{1})}\right)^2$. We remark here that $I_{N}(\psi_{0},\psi_{1})$ depends on the constraint equations \eqref{1.2}-\eqref{1.3-associated} on the initial data. Thus, the global existence of the smooth solution is obtained in Region {\rm II}.

\section{Smallness on the last slice $C_{\delta}$}\label{Section4}
 In this section, we shall prove that the solution restricted on the boundary $C_{\delta}$ is small. This can be done  through integrating along $L$ on $C_{\delta}$, viewing that the data on the initial sphere $C_{\delta}\cap\Sigma_{1}$ vanish (see  \cite{Miao-Yu,Wang-Yu,Wang-Yu1}).
For this purpose, we will make use of an ODE form of the membrane equation (refer to \eqref{box-estimate-general}) which is analogous to the one in semilinear case. Additionally, we will use $\delta \p$ as commutators. Let
\begin{equation}\label{bar-Z-pb}
\bar Z=\{\Gamma\}\cup\{ \delta\p\} \quad \text{and} \, \quad \bar\p = \{L, \nablaslash\}
\end{equation}
be the good vector fields. We recall the definition of the indices $d_n= - \frac{n-1}{2}, \, n=2,3$.
\begin{proposition}\label{pro4.4}
Suppose we have bounded $E_i(\delta, \ub)$ and $\Eb_i(\delta, \ub)$ for $i \leq N, \, N\geq 6$. Then, for all $k \geq 1$ and $l \geq 0$, there is
\begin{equation}\label{decay-C}
 |\p^k\Gamma^l\phi|_{L^\infty (C_{\delta})} \lesssim \delta^{\frac{3}{4}}|\ub|^{-\frac{n-1}{2}}, \quad k+l \leq N-3.
\end{equation}
\end{proposition}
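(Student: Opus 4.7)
The argument will pivot on the decomposition of Lemma \ref{lemma-decom-curved-wave}, which we read as a transport equation
\[
e_3(2g^{34} e_4 \varphi) \;=\; \Box_{g(\partial\phi)}\varphi \;-\; \frac{n-1}{2r}\p_{\ub}\varphi \;+\; \frac{n-1}{2r}\p_u\varphi \;-\; g^{AB}e_A^\mu e_B^\nu\p_\mu\p_\nu\varphi \;+\; (\text{small corrections})
\]
along the integral curves of $e_3$. Since $e_3\sim L = \p_{\ub}$ is tangent to $C_\delta$ up to an error of size $\delta|\ub|^{2d_n-1/2}$ (by \eqref{e3-L-estimate}), and since $C_\delta\cap \Sigma_1\subset B_{1-2\delta}$ where $\phi$ and all its derivatives vanish, we can integrate the above identity from $\ub = 1-\delta/2$ along $C_\delta$ and obtain pointwise control on $e_4\varphi$; the comparison \eqref{e4-Lb-estimate} then transfers this to the bad derivative $\Lb\varphi$.

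The proof proceeds by induction on $k$, the number of $\p$-derivatives, using the commutator set $\bar Z=\{\Gamma,\delta\p\}$ introduced in \eqref{bar-Z-pb} so that the bad weight $\delta^{-k}$ appearing in Lemma \ref{lemma-apriori-estimate} is precisely compensated. For the base case $k=1$ and $l\le N-4$, we take $\varphi = \Gamma^l\phi$ and substitute the high-order RME of Lemma \ref{lemma-high-order-eq} for $\Box_{g(\p\phi)}\Gamma^l\phi$. The resulting right-hand side consists of double null forms $Q(\phi_{i_1},Q(\phi_{i_2},\phi_{i_3}))$ decomposed as in Lemma \ref{lemma-double} into the $J_{11}$-type terms (four derivatives, signature $0$) and the $J_{12}$-type terms (three derivatives, signature $-1$, with an extra $r^{-1}$ factor), together with lower-order decomposition corrections weighted by $\delta|\ub|^{2d_n-3/2}$ or better. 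The $L^\infty$ bounds from Lemma \ref{lemma-apriori-estimate} together with the hierarchy $|\bar\p\phi|\lesssim \delta^{3/4}|\ub|^{d_n-1/2}$, $|\Lb\phi|\lesssim |\ub|^{d_n}$ show that each such term, after integration in $\ub$ along $C_\delta$ of length at most $|\ub|$, yields a contribution $\lesssim \delta^{3/4+\epsilon}|\ub|^{d_n}$ for some $\epsilon>0$. Combining with the good-derivative bounds that are automatic from the energy estimate of Section \ref{Section3} (which already give $|\bar\p\Gamma^l\phi|\lesssim \delta^{3/4}|\ub|^{d_n-1/2}$), we conclude \eqref{decay-C} for $k=1$.

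For the inductive step, assume \eqref{decay-C} for all orders up to $k$ with $k+l\le N-3$, and consider $\p^{k+1}\Gamma^l\phi$. It suffices to estimate $\bar Z^{k+1}\phi$ with its natural $\delta$-weighting, since any $\Lb$ factor that we single out is morally $\delta^{-1}\cdot (\delta\p)$. Apply the transport identity to $\varphi = \bar Z^k\phi$ with the outermost factor being $\delta\Lb$; then $e_4\varphi$ plays the role of $\delta\Lb$ on $\bar Z^k\phi$, and the source
\[
F \;=\; \Box_{g(\p\phi)}\bar Z^k\phi \;+\; [\bar Z^k,\Box_{g(\p\phi)}]\phi \;+\; (\text{decomposition corrections})
\]
is controlled as in the base case using the inductive hypothesis on lower-order $\bar Z^j\phi$ and the fact that each commutator of $\delta\Lb$ with the metric coefficients produces $\delta\Lb\phi\sim \delta$, trading the otherwise singular $\Lb\phi\sim 1$ for a good $\delta$-weight. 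Integrating along $e_3$ on $C_\delta$ and invoking \eqref{e4-Lb-estimate} once more closes the induction.

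The main obstacle is the careful bookkeeping of signatures and $\delta$-powers in the commutator $[\bar Z^k,\Box_{g(\p\phi)}]$ combined with the Christoffel correction terms that appear in Lemma \ref{lemma-decom-curved-wave}: one needs the double null structure of the high-order equation (not merely a quadratic null condition) so that every cubic term has either four derivatives with signature $\ge 0$ or three derivatives plus an extra $r^{-1}$, both of which are integrable in $\ub$ along $C_\delta$ with a surplus $\delta^{\epsilon}$. In dimension $n=2$ a latent logarithmic loss in $\ub$ would otherwise appear at several places; this is absorbed into $\epsilon$ by taking $\delta_0$ small enough.
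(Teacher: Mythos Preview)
Your approach matches the paper's: read Lemma \ref{lemma-decom-curved-wave} as a transport equation for $e_4\varphi$ along $e_3\sim L$ on $C_\delta$, feed in the double-null source from Lemma \ref{lemma-high-order-eq}, and induct with the weighted commutators $\bar Z=\{\Gamma,\delta\p\}$.

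One step you leave implicit is actually essential and should be spelled out. The linear term $\tfrac{n-1}{2r}\p_u\varphi$ that you carry on the right-hand side of the transport equation has coefficient $\sim|\ub|^{-1}$, which is \emph{not} integrable along $C_\delta$; a direct Gronwall therefore produces polynomial growth in $\ub$ and destroys the target decay $|\ub|^{d_n}$. The paper deals with this by setting $\tilde e_4=-2g^{34}e_4$, multiplying the transport identity by $\ub^{n-1}\tilde e_4\varphi$, and observing that the dangerous linear term combines with $e_3(\ub^{n-1})$ into the exact derivative of $h^2=\ub^{n-1}(\tilde e_4\varphi)^2$ up to an $O(\delta|\ub|^{-2})$ remainder (using $|e_3\ub-1|\lesssim\delta|\ub|^{2d_n-1/2}$ and \eqref{Lb-tilde-e4}). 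Only after this weight trick does Gronwall close with the correct decay. Your phrase ``integrate the above identity'' hides exactly this point.

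Two minor corrections: the paper treats the bottom case $\Lb\phi$ separately via the divergence form \eqref{M-eq-0-order} of RME (not the $e_3,e_4$ decomposition), and the initial sphere on $C_\delta$ is $S_{\delta,1-\delta}$, not $\ub=1-\delta/2$.
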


\begin{proof}
We shall prove this proposition by induction. When $k+l=1, \, k \geq 1$, i.e. $k=1, \, l=0$, the smallness for the good derivatives follows from the \emph{a priori} estimates in Section \ref{Section-large-data},
$$|L\phi|\lesssim \delta|\ub|^{d_n-\frac{1}{2}}, \quad |\nablaslash\phi|\lesssim \delta^{\frac{3}{4}}|\ub|^{d_n-\frac{1}{2}}.$$
Therefore, it suffices to show the smallness of $\Lb\phi$ on $C_{\delta}$ in this case. We take $\varphi = \phi$ in the ODE form \eqref{box-estimate-general}, then
\begin{equation}\label{M-ode-0-order-null-eta}
\begin{split}
0 & =  \p_{\ub} \p_u \phi  + \frac{n-1}{2r} \p_{u}\phi - \frac{n-1}{2r} \p_{\ub}\phi - \nablaslash^\omega \nablaslash_\omega \phi \\
& \quad \pm |\ub|^{2d_n} \p^2_{\ub} \phi \pm \delta^2 |\ub|^{2d_n-1} \p_u^2 \phi  \pm \delta^{\frac{7}{4}} |\ub|^{2d_n - 1}  \nablaslash  \p_u \phi  \\
& \quad \pm \delta^{\frac{3}{4}} |\ub|^{2d_n - \frac{1}{2}}  \nablaslash \p_{\ub} \phi  \pm \delta^{\frac{3}{2}} |\ub|^{2d_n -1} \nablaslash^2 \phi  \\
& \quad  \pm |\ub|^{2d_n - 1} \p_{\ub}  \phi \pm \delta  |\ub|^{2d_n - \frac{3}{2}} \p_u \phi \pm \delta^{\frac{3}{4}} |\ub|^{2d_n - \frac{3}{2}}  \nablaslash \phi.
\end{split}
\end{equation}
Substituted the estimates proved in Section \ref{Section-large-data}, \eqref{M-ode-0-order-null-eta} turns into
\begin{equation}\label{ODE-estimate-0}
 \p_{\ub} \p_u \phi + \frac{n-1}{2r} \p_{u}\phi
 \lesssim  \delta^{\frac{3}{4}} |\ub|^{d_n-\frac{3}{2}}.
\end{equation}
Multiplying $2 \underline{u}^{n-1} \p_u \phi$ on both sides of \eqref{ODE-estimate-0}, we obtain
\begin{equation}\label{M-ineq-0-order-null-eta}
\begin{split}
&\quad L\left(\ub^{n-1} (\Lb\phi)^2 \right) =  (n-1) \ub^{n-2} (\Lb\phi)^2 +  2 \ub^{n-1} \Lb \phi L \Lb\phi  \\
&  \lesssim 2  \Big| - \frac{n-1}{2r} + \frac{n-1}{2\ub}\Big| \ub^{n-1} (\Lb\phi)^2 + \ub^{n-1} |\Lb \phi| \cdot \delta^{\frac{3}{4}}   |\ub|^{d_n-\frac{3}{2}}.
\end{split}
\end{equation}
Noticing that, the coefficient has smallness, $$\Big|- \frac{1}{r} + \frac{1}{\ub} \Big| \lesssim \delta|\ub|^{-2}.$$
We then integrate along the curves of $L$ on $C_{\delta}$, and use the fact that $\Lb\phi$ vanishes on $S_{\delta, 1-\delta}$. An application of the Gr\"{o}nwall inequality yields that
\begin{equation}\label{Lb-phi-1-c-delta}
 |\Lb\phi|_{C_{\delta}} \lesssim \delta^{\frac{3}{4}}|\ub|^{-\frac{n-1}{2}}.
\end{equation}

Next, we proceed to the case of $k+l=2, \, k \geq 1$. In view of the definition \eqref{bar-Z-pb}, the smallness of $\bar\p\Gamma\phi$ follows from the energy estimates in Section \ref{Section-large-data}. Hence, we are left with: $\Lb \Gamma \phi$, $\Lb \bar \p\phi$ and $\Lb^2\phi$.

We will first prove the smallness for $\Lb \bar Z\phi$, which covers $\Lb \Gamma \phi$. After commuting $\bar Z$ with the RME (see Lemma \ref{lemma-high-order-eq}), we have, for $\varphi=\bar Z\phi$
\begin{equation}\label{wave-from-membrane-general}
\Box_{g(\p\phi)}\varphi=F(\p\varphi, \p\phi, \p^2 \phi),
\end{equation}
where $F(\p\varphi, \p\phi, \p^2 \phi)$ denotes the lower order derivative terms satisfying the double null condition.
A calculation shows that $F(\partial\varphi,\partial\phi, \p^2 \phi)$ would be of lower order in terms of the power of $\delta$ and the decay of $\ub$:
\begin{equation}\label{source}
|F(\partial\varphi,\partial\phi, \p^2 \phi)|\lesssim \delta|\underline{u}|^{2d_{n}-\frac{3}{2}}|\underline{L}\varphi|+|\underline{u}|^{2d_{n}-1}|L\varphi| + \delta^{\frac{3}{4}}|\underline{u}|^{2d_{n}-\frac{3}{2}} |\nablaslash\varphi| + \delta^{\frac{3}{2}}|\underline{u}|^{3d_{n}-1}.
\end{equation}
Applying \eqref{box-estimate-general} to $\varphi=\bar{Z}\phi$,
and combining the $L^\infty$ estimates of $\p^i \bar Z \phi, \, 1 \leq i \leq 2$ with \eqref{source}, we also derive the inequality \eqref{ODE-estimate-0} with $\phi$ therein being replaced by $\varphi = \bar Z \phi$.
 A similar argument as before leads to
\begin{equation*}
|\Lb\bar Z\phi|_{C_\delta} \lesssim \delta^{\frac{3}{4}} |\ub|^{-\frac{n-1}{2}}.
\end{equation*}
Generally, we can commute $\bar Z$ repeatedly with the membrane equation to conclude
\begin{equation}\label{Lb-estimate-last-slice-n-derivative}
|\Lb\bar Z^k \phi|_{C_\delta} \lesssim \delta^{\frac{3}{4}} |\ub|^{-\frac{n-1}{2}}, \,\,\, k\in \mathbb{N}.
\end{equation}

For $\Lb \bar \p\phi$, noting that $|\Lb \bar \p\phi | \lesssim |\ub|^{-1} | \Lb \Gamma \phi |$, hence the smallness of $\Lb \bar \p\phi$ on  $C_{\delta}$ follows.

For the last case $\Lb^2\phi$, or equivalently $\Lb\p\phi$, we commute $\p$ with the membrane equation, and take $\varphi = \p\phi$. We use the previous results \eqref{Lb-estimate-last-slice-n-derivative}, then the estimate \eqref{ODE-estimate-0} are also satisfied with $\phi$ being replaced by $\varphi = \p\phi$.  Thus, we can repeat the argument before to achieve
\begin{equation}\label{Lb2-estimate-last-slice-2nd-derivative}
 |\Lb\p\phi|_{C_\delta} \lesssim \delta^{\frac{3}{4}} |\ub|^{-\frac{n-1}{2}}.
\end{equation}

Now, we have completed the proof for the case of $k+l=2, \, k \geq 1$. Without loss of generality, the smallness for  higher order derivatives on the last null cone $C_{\delta}$ follows by induction.

\end{proof}

\section{Globally smooth solution in the small data region I}\label{Section-small-data}
In this section, we mainly investigate the solution to \eqref{1.1} in the small data region I, which has boundaries $C_{\delta}$ and $\Sigma_{1}$.

Denote $\bar{D}$ the small data region {\rm I}:
$$
\bar{D}\doteq \Big\{(t,x)\big|  u \geq\delta,\, t\geq1\Big\}.
$$
We still use $\Sigma_{\tau}$ to denote the constant time slice in $\bar{D}$, i.e.,
$\Sigma_{\tau}=\{(t,x)| t=\tau\} \cap \bar{D}.$
Given a point $(t,x)\in \Sigma_{t}$, we use $(t, B(t,x))$ to denote the points in $C_{\delta}$. We now present the Klainerman-Sobolev inequality without proof (refer to \cite{Christodoulou}, see also \cite{Miao-Yu} and \cite{Wang-Yu}).
\begin{lemma}[\emph{Klainerman-Sobolev inequality}]For any $\varphi \in C^{\infty}(\mathbb R^{1+n})$ and $t>1$ with $(t,x)\in\bar{D}$, we have
\begin{equation*}
|\varphi (t,x)|\lesssim \frac{1}{(1+|u|)^{\frac{1}{2}}}|\varphi (t,B(t,x))|+\frac{1}{(1+|\underline{u}|)^{\frac{n-1}{2}}(1+|u|)^{\frac{1}{2}}}\sum_{k\leq 3}\|Z^{k} \varphi \|_{L^{2}(\Sigma_{t})}.
\end{equation*}
\end{lemma}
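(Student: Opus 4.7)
The estimate is a Klainerman--Sobolev inequality on the slice $\Sigma_t\cap\bar{D}$, in which the artificial boundary $C_\delta$ replaces the role of spatial infinity in the classical setting; the extra pointwise contribution $|f(t,B(t,x))|$ accounts for this boundary data. My plan combines a Hardy-type identity producing the $(1+|u|)$ weight with the standard control of $\partial_r$ by the Klainerman vector fields.

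I would first derive a Hardy identity pointwise in the angular direction $\omega=x/|x|$. Using $\partial_{r'}u'=-\tfrac12$,
\begin{equation*}
\frac{d}{dr'}\bigl[(1+u')\,|f(t,r'\omega)|^{2}\bigr] = -\tfrac{1}{2}|f|^{2} + 2(1+u')\,f\,\partial_{r'}f,
\end{equation*}
and integration from $r$ to $t-2\delta$ (the radial coordinate of $B(t,x)$) gives
\begin{equation*}
(1+|u|)\,|f(t,x)|^{2} = (1+\delta)\,|f(t,B(t,x))|^{2} + \tfrac12\!\int_r^{t-2\delta}\!|f|^{2}\,dr' - 2\!\int_r^{t-2\delta}\!(1+u')\,f\,\partial_{r'}f\,dr'.
\end{equation*}
The Hardy weight $(1+|u|)$ on the left is exactly what produces the $(1+|u|)^{-1/2}$ factor in the final estimate, after taking a square root and dividing through.

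Next I would control the remaining integrals. Combining $\partial_r=(L-\underline{L})/2$ with the identities $L=(2\underline{u})^{-1}(S+\omega^{i}\Omega_{0i})$ and $\underline{L}=(2u)^{-1}(S-\omega^{i}\Omega_{0i})$, together with the trivial bound $|\partial_{r}f|\leq|Zf|$ (since $\partial_r=\omega^{i}\partial_{i}$ is a unit-coefficient combination of Klainerman fields in $Z$), interpolates to $(1+u')|\partial_{r'}f|\lesssim|Zf|$ uniformly on $\{u'\geq\delta\}$. AM--GM then turns the last integrand into $|f|^{2}+|Zf|^{2}$, and the Sobolev embedding $H^{2}(S^{n-1})\hookrightarrow L^{\infty}(S^{n-1})$ (valid for $n\in\{2,3\}$ with two rotations $\Omega^{\alpha}\subset Z$) together with the $\Sigma_t$-volume form $r'^{n-1}\,dr'\,d\omega'$ promotes these fibrewise-in-$\omega$ integrals into a genuine $L^{2}(\Sigma_t)$-norm, yielding the bound $(1+|\underline{u}|)^{-(n-1)}\sum_{k\leq 3}\|Z^{k}f\|_{L^{2}(\Sigma_t)}^{2}$ with total derivative count $2+1=3$. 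Plugging into the Hardy identity, taking a square root, and dividing by $(1+|u|)^{1/2}$ yields the stated inequality.

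The main obstacle is carrying through this last step uniformly up to the axis $\{r\to 0\}$, where a naive Cauchy--Schwarz produces the weight $r^{-(n-1)}$ instead of the required $(1+|\underline{u}|)^{-(n-1)}$. This is remedied by a dyadic decomposition in $r$: on every shell with $r\gtrsim 1$ one has $r\sim \underline{u}$ and the naive bound suffices, while near the axis one additionally exploits the scaling $S$ and Lorentz boosts $\Omega_{0i}\in Z$ to trade spatial derivatives for derivatives with coefficients of size $t$, recovering the $(1+|\underline{u}|)^{-(n-1)/2}$ decay. It is equally important that the Hardy identity is carried out pointwise in the specific direction $\omega=x/|x|$ \emph{before} any angular Sobolev embedding, so that the boundary contribution retains its pointwise form $|f(t,B(t,x))|$ rather than being weakened to an angular $L^{2}$-norm on the sphere $S_{t,t-2\delta}\subset C_\delta$.
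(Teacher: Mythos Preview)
The paper does not prove this lemma; it is stated ``without proof, for which the idea comes from \cite{Christodoulou} (see also \cite{Miao-Yu} and \cite{Wang-Yu}).'' Your outline---a radial Hardy identity producing the boundary term $|f(t,B(t,x))|$ and the $(1+|u|)^{-1/2}$ weight, followed by angular Sobolev and the bound $(1+u')|\partial_{r'}f|\lesssim |Zf|$---is indeed the standard route in those references, and the parts you carry out in detail are correct.

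There is, however, a genuine slip in your treatment of the interior region. You write that ``on every shell with $r\gtrsim 1$ one has $r\sim\underline{u}$,'' but this is false: at a point with $r=1$ and $t$ large one has $\underline{u}=(t+1)/2\sim t\gg r$. The correct dichotomy is $r\geq t/2$ versus $r\leq t/2$. On the outer region $r\geq t/2$ one has $r\sim t\sim\underline{u}$ and your Hardy-plus-angular-Sobolev argument closes exactly as you describe, yielding the factor $r^{-(n-1)}\sim(1+\underline{u})^{-(n-1)}$. On the inner region $r\leq t/2$, the Hardy framework should be abandoned entirely: there $|u|\sim\underline{u}\sim t$, and one instead rescales $y=x/t$ and applies the ordinary Sobolev embedding on the unit ball, using that $t\partial_i=\Omega_{0i}-x_i\partial_t$ with $|x_i|\leq t/2$ to convert $t^k\partial^k$ into combinations of $Z^k$. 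This gives $|f(t,x)|\lesssim t^{-n/2}\sum_{k\leq 2}\|Z^kf\|_{L^2(\Sigma_t)}$ directly, which matches the claimed decay since $t^{-n/2}\sim(1+\underline{u})^{-(n-1)/2}(1+|u|)^{-1/2}$ in this region. Your phrase ``exploits the scaling $S$ and Lorentz boosts $\Omega_{0i}$ to trade spatial derivatives for derivatives with coefficients of size $t$'' gestures at this, but it is a separate argument, not a repair of the Hardy step.
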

Before stating our main result, we define the energy on the constant $t$ slice $\Sigma_t$,
\begin{equation*}
\bar{E}_{k}(t)=\left(\int_{\Sigma_{t}}(\partial_{t}Z^{k}\phi)^2+|\nabla Z^{k}\phi|^2d^n x\right)^{\frac{1}{2}}, \quad \bar{E}_{\leq k}(t)=\sum_{0 \leq j \leq k} \bar{E}_{j}(t).
\end{equation*}
The main goal of this section is to prove the following proposition, which can show the global existence of small solution to RME in Region {\rm I}.
\begin{proposition}
Assume the solution to the RME satisfies \eqref{decay-C} on $C_{\delta}$ (see Proposition \ref{pro4.4}), then there exists a uniquely global solution to the RME \eqref{1.1} in the small data region. Moreover, the solution enjoys the following energy estimate
\begin{equation}\label{eq-energy-inequ-small-region-III}
\bar{E}_{\leq N}(t) \leq \delta^{\frac{3}{4}}C(I_{N}(\psi_0, \psi_1)),
\end{equation}
for $t\geq 1$, $12 \leq N$ and $C(I_{N}(\psi_0, \psi_1))$ depends on the data up to $N+1$ order of derivatives.
 \end{proposition}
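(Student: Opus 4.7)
The plan is to treat region I as a genuine small data global existence problem, combining trivial data on $\Sigma_1 \cap \bar D$ with the small traces on $C_\delta$ supplied by Proposition \ref{pro4.4}, and then running a standard Klainerman vector field / bootstrap argument. On $\Sigma_1 \cap \bar D \subset B_{1-2\delta}$ the prescribed data are identically zero; on the lateral boundary $C_\delta$, Proposition \ref{pro4.4} gives $|\partial^k\Gamma^l\phi|_{L^\infty(C_\delta)} \lesssim \delta^{3/4+\epsilon}(1+|\underline u|)^{-(n-1)/2}$ for all relevant $k,l$. Both ``data'' are therefore of size $\delta^{3/4}$.

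First I would fix the bootstrap assumption $\bar E_{\leq N}(\tau) \leq 2\delta^{3/4} C(I_{N+3})$ on a maximal interval $[1,T^*)$ and use $\partial_t$ as the multiplier. Because the bootstrap and Klainerman--Sobolev force $|\partial\phi|\lesssim \delta^{3/4}$ pointwise in $\bar D$, the membrane metric $g(\partial\phi)$ is a small Lorentzian perturbation of $\eta$, so $\partial_t$ is timelike with respect to $g(\partial\phi)$ and the corresponding energy $\bar E_{\leq N}(\tau)$ is coercive (positivity of the modified energies in Lemma \ref{lemma-energy-formula} restricted to $\partial_t$). Commuting RME with $Z^k$, $k\leq N$, via Lemma \ref{lemma-high-order-eq} yields $\Box_{g(\partial\phi)}Z^k\phi = F_k$ where $F_k$ is a sum of double null forms $Q(Z^{i_1}\phi,Q(Z^{i_2}\phi,Z^{i_3}\phi))\cdots$ of signature in $(-2,2)$.

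Second, I would apply the energy identity \eqref{2.19} with $\xi = \partial_t$ in the domain $\bar D_\tau$ bounded by $\Sigma_1\cap \bar D$, $\Sigma_\tau$, and $C_\delta^{\underline u(\tau)}$:
\begin{equation*}
\bar E_{\leq N}^2(\tau) \lesssim \underbrace{\bar E_{\leq N}^2\big|_{\Sigma_1\cap\bar D}}_{=\,0} + \underbrace{\int_{C_\delta^{\underline u(\tau)}} -\sqrt{g}P^u[Z^k\phi,\partial_t]\,r^{n-1}d\underline u\,d\sigma}_{\lesssim\, \delta^{3/2}C(I_{N+3})^2} + \iint_{\bar D_\tau} \sqrt{g}\,|F_k \cdot \partial_t Z^k\phi|\,d^{n+1}x.
\end{equation*}
The $C_\delta$ flux is absorbed into the ``initial'' data by Proposition \ref{pro4.4} applied up to the highest order needed. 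The nonlinear error terms coming from the deformation tensor of $\partial_t$ and from $\partial_\alpha(\sqrt{g}g^{\alpha\beta})$ are estimated exactly as the $H_i$ and $I_j$ in Section \ref{subsec-E-N}, but the overall factors of $\partial\phi$ now give additional $\delta^{3/4}$-smallness rather than largeness in higher derivatives.

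Third, I would use the Klainerman--Sobolev inequality stated at the beginning of the section to convert lower-order energies into pointwise decay: for $k\leq N-3$,
\begin{equation*}
|Z^k\phi(t,x)| \lesssim \frac{|Z^k\phi(t,B(t,x))|}{(1+|u|)^{1/2}} + \frac{\delta^{3/4}C(I_{N+3})}{(1+\underline u)^{(n-1)/2}(1+|u|)^{1/2}},
\end{equation*}
where the first term is supplied by Proposition \ref{pro4.4}. The double null structure of $F_k$, decomposed as in Lemma \ref{lemma-double} into the $J_{11}$ type (four derivatives, signature $0$) and the $J_{12}$ type (three derivatives, signature $-1$ with an extra $r^{-1}$), guarantees that every nonlinear contribution contains at least one good derivative $\bar\partial=\{L,\nablaslash\}$, producing an extra factor of $(1+\underline u)^{-1}$ and hence time-integrable decay for $n=2,3$. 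Combining with Gronwall yields $\bar E_{\leq N}^2(\tau) \leq \delta^{3/2}C(I_{N+3})^2 + \int_1^\tau O(\langle s\rangle^{-1-\epsilon})\,\bar E_{\leq N}^2(s)\,ds$, which improves the bootstrap constant and shows $T^*=+\infty$.

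The main obstacle is the borderline case $n=2$, where the factor $(1+\underline u)^{-1/2}$ from Klainerman--Sobolev is not itself integrable in time. This is resolved exactly as in the short-pulse region by exploiting that every double null form either possesses two derivatives of signature $0$ tangential to $C_u$ (giving an extra $L$ or $\nablaslash$, hence a gain of $(1+\underline u)^{-1/2}$) or carries the explicit $r^{-1}$ prefactor of the $J_{12}$ terms. A secondary subtlety is that the flux through $C_\delta$ for the quasilinear energy-momentum tensor involves $|L\phi|^2|\underline L\varphi|^2$-type terms, but these are controlled directly by the stronger pointwise estimates of Proposition \ref{pro4.4} applied to one additional order of derivatives, which accounts for the $N+3$ in the constant $C(I_{N+3})$.
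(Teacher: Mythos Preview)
Your proposal is essentially correct and follows the same approach as the paper: bootstrap assumption, $\partial_t$ as multiplier, Klainerman--Sobolev for pointwise decay, the energy identity with the boundary flux on $C_\delta$ supplied by Proposition \ref{pro4.4}, and the double null structure of the commuted inhomogeneity to close. One refinement the paper makes explicit that you leave implicit is the splitting of $\bar D$ into the cone interior $\{r\leq t/2\}$ and the exterior $\{t/2\leq r\leq t-2\delta\}$: in the interior the Klainerman--Sobolev decay already gives $|\partial Z^p\phi|\lesssim \delta^{3/4}t^{-n/2}$ (since $|u|\gtrsim t$ there), so the cubic structure alone, without any null decomposition, yields the $\int_1^t \tau^{-n}\,d\tau$ integrable error; the $J_{11}/J_{12}$ null decomposition is invoked only in the exterior, where $r\sim t$ but $|u|$ need not be large. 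Your assertion that the good derivative gains $(1+\underline u)^{-1}$ is clean only in this exterior region.
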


 \begin{proof}
 The proof is based on a bootstrap argument. We assume that the solution exists up to time $t$ and for all $1\leq t^{\prime}\leq t$, there is a large constant $M$ to be determined such that
 \begin{equation*}
 \bar{E}_{\leq N}(t)\leq M\delta^{\frac{3}{4}}.
 \end{equation*}
 It suffices to prove that at the end, we could choose $M$ such that it only depends on the initial data.
 By the Klainerman-Sobolev inequality, we see that for $p\leq N-3$,
 \begin{equation}\label{L-infty-small-data}
 \begin{split}
 |\partial Z^{p}\phi|&\lesssim \frac{1}{(1+|u|)^{\frac{1}{2}}}|\partial Z^{p}\phi(t,B(t,x))|+\frac{1}{(1+|\underline{u}|)^{\frac{n-1}{2}}(1+|u|)^{\frac{1}{2}}}
 \sum_{j\leq 3}\|Z^j\partial Z^{p}\phi\|_{L^{2}(\Sigma_{t})}\\
 &\lesssim \frac{C(I_{N}(\psi_0, \psi_1))}{(1+|\underline{u}|)^{\frac{n-1}{2}}(1+|u|)^{\frac{1}{2}}}\delta^{\frac{3}{4}}
 +\frac{M}{(1+|\underline{u}|)^{\frac{n-1}{2}}(1+|u|)^{\frac{1}{2}}}\delta^{\frac{3}{4}}.
 \end{split}
 \end{equation}
 This shows that $\partial\phi$ is small with proper decay estimate.

 In the following, we perform the energy argument and take $\xi=2\partial_{t}$ as the multiplier (noting that $g (\p_t, \p_t) = -1 + (\p_t \phi)^2 \leq -1 + \delta^{\frac{3}{2}} M^2<0$, hence $\p_t$ is timelike), then
\begin{align}
- P^{t}[\phi_{k},2\partial_{t}] 
&=(\partial_{t}\phi_{k})^2+|\nabla\phi_{k}|^2 - \frac{Q(\phi_k, \phi)}{g} \left(2\p_t \phi \p_t \phi_{k} + Q(\phi_k, \phi) \right), \label{small-region-energy-t} \\
-P^{u}[\phi_{k},2\partial_{t}] 
&=\frac{1}{2}(L\phi_{k})^2 + \frac{1}{2}|\nablaslash\phi_{k}|^2 - \frac{Q(\phi_k, \phi)}{2g} \left(2L\phi \p_t \phi_{k} + Q(\phi_k, \phi) \right). \label{small-region-energy-u}
\end{align}
Recalling that $\partial\phi$ is small enough, it is easy to check that for \eqref{small-region-energy-t}, \eqref{small-region-energy-u}
\begin{align*}
-P^{t}[\phi_{k},2\partial_{t}] & \sim (\partial_{t}\phi_{k})^2+|\nabla\phi_{k}|^2, \\
-P^{u}[\phi_{k},2\partial_{t}] & \sim \frac{1}{2}(L\phi_{k})^2+\frac{1}{2}|\nablaslash\phi_{k}|^2 + \frac{1}{8} |L\phi|^2 (\Lb\phi_{k})^2.
\end{align*}
 Then by the divergence theorem, we have
 \begin{equation}\label{eq-diver-small-region}
 \begin{split}
 \bar{E}^2_{k}(t)&\lesssim C(I_{k}(\psi_0, \psi_1)) \delta^{\frac{3}{2}} +\iint_{\bar{D}}|\partial_{\alpha}(\sqrt{g}P^{\alpha} [\phi_k, 2 \p_t])|d^{n+1}x\\
 &\lesssim C(I_{k}(\psi_0, \psi_1)) \delta^{\frac{3}{2}}\\
& +\iint_{\bar{D}}\Big|2\sqrt{g}\Box_{g(\p\phi)}\phi_{k}\partial_{t}\phi_{k}-\frac{\partial_{t}Q}{g}g^{\gamma\rho}\partial_{\gamma}\phi_{k}\partial_{\rho}\phi_{k}
 + \frac{2\partial_{t}(\partial^{\gamma}\phi\partial^{\rho}\phi)}{\sqrt{g}}\partial_{\gamma}\phi_{k}\partial_{\rho}\phi_{k}\Big|d^{n+1}x.
 \end{split}
 \end{equation}

 In what follows, the energy estimates will be done separately in two subregions of $\bar{D}$, which are divided by $r=\frac{t}{2}$.
 \subsection{Interior region}
 We consider the equation in the domain $\left\{(t,x)| r\leq \frac{t}{2}\right\}$ at first. By the Klainerman-Sobolev inequality \eqref{L-infty-small-data}, each derivative of the solution admits good decay rate, for we have $|u|\geq \frac{t}{2}$ in this subregion. Namely, there is
 \begin{equation*}
 |\partial Z^{p}\phi|
 \lesssim \frac{1}{t^{n/2}}(I_{N}(\psi_{0},\psi_{1})+M)\delta^{\frac{3}{4}}, \quad n \geq 2.
 \end{equation*}
 The nonlinear terms in the double integrals of \eqref{eq-diver-small-region} are all at least quartic. They can be bounded by (we do not need the double structures here),
 \begin{equation}\label{integral}
 \int_{1}^{t}\frac{\delta^3}{\tau^{n}}M^2 \left( I_{N}(\psi_{0},\psi_{1})+M \right)^2d\tau, \quad n \geq 2.
 \end{equation}

 \subsection{Exterior region}
 When confining ourselves to the domain $\left\{(t,x)|\frac{t}{2}\leq r\leq t-2\delta\right\}$, the double null structure of the non-linearities in \eqref{eq-diver-small-region} comes into play. Again, they can be classified as the $J_{11}$ \eqref{J-11} and $J_{12}$ \eqref{J-12} types (see Section \ref{subsec-E-N} and Remark \ref{rk-J-11-J-12}). The $J_{11}$ type \eqref{J-11} is cubic, which suggests more decay rates on $t$. In applications, we use $L^2$ norm to control the higher order derivative and use $L^{\infty}$ norm to bound the terms with lower order derivatives. Here we only take the worst term such as $\underline{L}^2 \phi_{i_{1}} L\phi_{i_{2}}L \phi_{i_{3}}\partial\phi_{k},\, (i_{1} \leq i_{2}\leq i_{3}\leq k, \, i_1 + i_2 +i_3 \leq k)$ of the $J_{11}$ type for example,
 \begin{align*}
 &\quad \iint_{\bar{D}\cap\{r\geq\frac{t}{2}\}}|\underline{L}^2 \phi_{i_{1}} L \phi_{i_{2}}L\phi_{i_{3}}\partial\phi_{k}|d^nxdt\\
 &\lesssim \int_{1}^{t}\|\underline{L}^2 \phi_{i_{1}}\|_{L^{\infty}(\Sigma_{\tau})}\|L \phi_{i_{2}}\|_{L^{\infty}(\Sigma_{\tau})}
 \|L\phi_{i_{3}}\|_{L^{2}(\Sigma_{\tau})}\|\partial\phi_{k}\|_{L^{2}(\Sigma_{\tau})}d\tau\\
 &\lesssim \int_{1}^{t}\frac{\delta^{3}}{\tau^{n-\frac{1}{2}}}M^{4}\leq \delta^{3}M^{4}, \quad n \geq 2.
 \end{align*}
The other terms of $J_{11}$ type can be estimated in the same way.
 The $J_{12}$ type \eqref{J-12} has an extra factor $r^{-1}$ and possess two angular derivative terms. Generally, for $i_1 + i_2 +i_3 \leq k$, $i_1 \leq i_2 \leq i_3$,
 \begin{align*}
 &\quad \iint_{\bar{D}\cap \{r\geq\frac{t}{2}\}}|r^{-1}\partial_{r}\phi_{i_{1}}\nablaslash\phi_{i_{2}}\nablaslash\phi_{i_{3}}\partial\phi_{k}|d^nxdt\\
 &\lesssim \int_{1}^{t}\tau^{-1}\|\partial_{r}\phi_{i_{1}}\|_{L^{\infty}(\Sigma_{\tau})}\|\nablaslash\phi_{i_{2}}\|_{L^{\infty}(\Sigma_{\tau})}
 \|\nablaslash\phi_{i_{3}}\|_{L^{2}(\Sigma_{\tau})}\|\partial\phi_{k}\|_{L^{2}(\Sigma_{\tau})}d\tau\\
 &\lesssim\int_{1}^{t}\frac{\delta^{\frac{3}{2}}}{\tau^{1+\frac{n}{2}+\frac{n-1}{2}}}M^2\delta^{\frac{3}{2}}M^2d\tau \lesssim \int_{1}^{t}\frac{\delta^{3}M^{4}}{\tau^{n+\frac{1}{2}}}\lesssim \delta^{3}M^{4}, \quad n \geq 2.
 \end{align*}
 Combining the estimates for the $J_{11}$ and $J_{12}$ types and \eqref{integral}, we deduce that for $n \geq 2$,
 \begin{equation*}
 \bar{E}^{2}_{\leq N}(t)\lesssim C(I_{N}(\psi_0, \psi_1))\delta^{\frac{3}{2}}+\int_{1}^{t}\frac{\delta^{3}}{\tau^{n - \frac{1}{2}}}M^{4}d\tau
 \lesssim C(I_{N})\delta^{\frac{3}{2}}+\delta^{3}M^{4}.
 \end{equation*}
 Thus, if $\delta$ is small enough, the \emph{a priori} estimate \eqref{eq-energy-inequ-small-region-III} holds easily.
 \end{proof}
 \begin{remark}
 By the above energy estimates, we have shown that the membrane equation with $n\geq2$ admits a uniform energy bound due to the fact that the nonlinear terms satisfy the double null condition. This is also an improvement of Lindblad's result \cite{Lindblad}, where he proved that the higher order energy is slowly growing with time.
 \end{remark}
%

\section{Nonlinear effect of membrane}\label{sec-blow up-scri}
In this section, we concern the asymptotic behavior near the null infinity rather than the large data problem. Hence, we will focus on the small data problem and take $n=3$, i.e., consider the Cauchy problem of the $1+3$-dim membrane equation with the initial data,
\begin{equation*}
\phi |_{t=0} = \varepsilon \phi_0, \quad \p_t \phi |_{t=0} = \varepsilon \phi_1,
\end{equation*}
where  the size of data $\varepsilon$ is small enough.

As we know, for any function $f$, $ |\p_{\ub} f| +|\nablaslash f|  \lesssim  \frac{1}{|\ub|+1} |\Gamma f|$, then the decay rates of $\phi$ in sections \ref{Section-large-data} and \ref{Section-small-data} can be improved as (in the small data setting), for all $j \in \mathbb{N}$,
\begin{equation}\label{decay-d -phi}
|\p_u \Gamma^j \phi | \lesssim \frac{\varepsilon}{(|u|+1)^{\frac{1}{2}} (|\ub|+1)}, \quad |\p_{\ub} \Gamma^j \phi | +|\nablaslash \Gamma^j \phi |  \lesssim \frac{\varepsilon}{(|\ub|+1)^{\frac{3}{2}}}.
\end{equation}
In the following quantitative estimates, we will need  the decay results \eqref{decay-d -phi}, which can be written more explicitly as
\begin{equation}\label{decay-mem}
\begin{split}
|r^{1+i+j} \nablaslash^j \p_{\ub}^i \p_u^{k+1} \phi|  &\lesssim \varepsilon (|u|+1)^{-\frac{1}{2}-k},\\
|r^{\frac{3}{2} +i+j} \nablaslash^j \p_{\ub}^{i+1} \p_u^{k} \phi|  & \lesssim  \varepsilon (|u|+1)^{-k},\\
|r^{\frac{3}{2} +i+j} \nablaslash^{j+1} \p_{\ub}^{i} \p_u^{k} \phi|  & \lesssim  \varepsilon (|u|+1)^{-k}.
\end{split}
\end{equation}
In fact, only $i+j+k \leq 1$ is needed for our purpose.

\subsection{Geometry of membrane}

\subsubsection{Null frame adapted to the sphere}\label{sec-null-frame}
In this section, we will define for the membrane (with metric $g_{\mu \nu}$) a null frame which is adapted to $S_{u, \ub}$ and quantitatively show how it is close to the standard null frame $\{L, \Lb\}$ in Minkowski spacetime. Recall that $S_{u, \ub}$ is the $2$-sphere with constant $u$ and $\ub$.

Let $\{e_A, \, A=1,2 \}$ be an orthonormal (local) frame (with respect to $g_{\mu \nu}$) on  $S_{u, \ub}$. We shall use the Capital Latin $A,B, \cdots$ to denote the indices on the $2$-sphere. Then $e_A = e_A^\omega \p_\omega$ and
\begin{equation}\label{g-grad-u-ub-e}
g(D u, e_A)=e_A(u) =0, \quad
 g(D \ub, e_A)=e_A(\ub) =0, \quad A=1, \, 2,
\end{equation}
where $D u, \, D \ub$ are the gradients defined in \eqref{grad-u-ub-expansion-1}.
As a result, at each point $p \in S_{u, \ub}$, the linear space expanded by $Du$ and $D\ub$ composes of the orthogonal complement $T_p S_{u, \ub}^{\perp}$ relative to $T_p\mathcal{M}$. Setting
\begin{equation}\label{def-e3-e4}
\tilde e_3 = -2 Du + \beta D\ub, \quad
\tilde e_4 =  -2 D\ub + \alpha Du,
\end{equation}
we choose the parameters $\alpha, \, \beta$,
\begin{equation*}
\begin{split}
\alpha &=\frac{1+ \frac{2}{g} \eta^{u \mu} \p_\mu \phi \eta^{\ub \nu} \p_{\nu} \phi - \sqrt{1+ \frac{ 4}{g}  \eta^{u \mu} \p_\mu \phi \eta^{\ub \nu} \p_{\nu} \phi} }{g^{-1}  (\eta^{u \nu}\p_{\nu} \phi)^2 }, \\
\beta &=\frac{1+ \frac{2}{g}  \eta^{u \mu} \p_\mu \phi \eta^{\ub \nu} \p_{\nu} \phi - \sqrt{1+ \frac{ 4}{g} \eta^{u \mu} \p_\mu \phi \eta^{\ub \nu} \p_{\nu} \phi} }{g^{-1}  (\eta^{\ub \mu}\p_{\mu} \phi)^2 },
\end{split}
\end{equation*}
such that
\begin{equation*}
g(\tilde e_3, \tilde e_3) = g(\tilde e_4, \tilde e_4) = 0.
\end{equation*}
Alternative expressions for $\alpha$ and $\beta$ take the forms of
\begin{equation*}
\begin{split}
\alpha &=\frac{4(\eta^{\ub \nu} \p_{\nu} \phi)^2}{g+ 2  \eta^{u \mu} \p_\mu \phi \eta^{\ub \nu} \p_{\nu} \phi + \sqrt{g^2+ 4g \eta^{u \mu} \p_\mu \phi \eta^{\ub \nu} \p_{\nu} \phi}}, \\
\beta &=\frac{4(\eta^{u \mu} \p_{\mu} \phi)^2}{g + 2 \eta^{u \mu} \p_\mu \phi \eta^{\ub \nu} \p_{\nu} \phi+ \sqrt{g^2+ 4g \eta^{u \mu} \p_\mu \phi \eta^{\ub \nu} \p_{\nu} \phi}},
\end{split}
\end{equation*}
which are approximate to, after using Taylor expansion (for $\frac{2}{g} \eta^{u \mu} \p_\mu \phi \eta^{\ub \nu} \p_{\nu} \phi = O(r^{-2-\frac{1}{2}})$ is small when r tends to infinity),
\begin{equation}\label{approximate-alpha-beta}
\begin{split}
\alpha &\sim \frac{2 (\eta^{\ub \nu} \p_{\nu} \phi)^2}{g} \left( 1-  \frac{2}{g} \eta^{u \mu} \p_\mu \phi \eta^{\ub \nu} \p_{\nu} \phi \right) = \frac{ (\p_{u} \phi)^2}{2 g} \left( 1 - \frac{\p_u \phi \p_{\ub} \phi }{2g} \right), \\
\beta& \sim \frac{2 (\eta^{u \mu} \p_{\mu} \phi)^2}{g} \left(1 -  \frac{2}{g} \eta^{u \mu} \p_\mu \phi \eta^{\ub \nu} \p_{\nu} \phi \right) = \frac{(\p_{\ub} \phi)^2}{2 g} \left( 1 - \frac{\p_u \phi \p_{\ub} \phi }{2g} \right).
\end{split}
\end{equation}
We normalize these null vector fields by
\begin{equation*}
e_4 = \tilde e_4, \quad e_3 = -\frac{2 \tilde e_3}{g(\tilde e_3 , \tilde e_4)},
\end{equation*}
where
\begin{align*}
 \frac{-2}{g(\tilde e_3 , \tilde e_4)} &\sim  \frac{-2}{-2- g^{-1} \p_u \phi \p_{\ub} \phi + O(r^{-5})} \\
 & =  \frac{1}{1 + \frac{1}{2g} \p_u \phi \p_{\ub} \phi + O(r^{-5})} \sim 1- \frac{ \p_u \phi \p_{\ub} \phi}{2g},
 \end{align*}
 (noting that $\frac{1}{2g} \p_u \phi \p_{\ub} \phi = O(r^{-2-\frac{1}{2}})$  is small when r tends to infinity),
so that the null frame $\{ e_3, \, e_4\}$ is normalized as
\begin{equation*}
g(e_3, e_3) = g(e_4, e_4) = 0, \quad g(e_3, e_4) = -2.
\end{equation*}
In view of \eqref{g-grad-u-ub-e}, $\{e_3 ,e_4, e_A,  \, A =1,2 \}$ is a null frame on $\mathcal{M}$ with respect to $g_{\mu \nu}$.

By \eqref{approximate-alpha-beta}, $e_4$ can be expanded as follows
\begin{align*}
e_4 &= \p_{u} + \frac{\p_{\ub} \phi \p_u \phi}{2g} \p_u + \frac{(\p_u \phi)^2}{2g} \p_{\ub}  - \frac{\p_u \phi \p^\omega \phi}{g}  \p_\omega  \\
& \quad - \frac{(\p_u \phi)^2}{4g} \p_{\ub} - \frac{ \p_u \phi \p_{\ub} \phi (\p_u \phi)^2 }{8g^2} \p_{\ub} - \frac{(\p_{\ub} \phi \p_u \phi )^2 }{8g^2} \p_u\\
& \quad +  \frac{ (\p_{u} \phi)^2 \p_{\ub} \phi \p^\omega \phi}{4g^2} \p_\omega + \text{l.o.t.},
\end{align*}
which further simplifies as
\begin{align*}
e_4 &=  \p_{u} + \frac{\p_{\ub} \phi \p_u \phi}{2g} \left( 1- \frac{ \p_u \phi \p_{\ub} \phi }{4} \right)  \p_u + \frac{(\p_u \phi)^2}{4g} \left( 1- \frac{ \p_u \phi \p_{\ub} \phi }{2} \right) \p_{\ub}  \\
&\quad - \frac{\p_u \phi \p^\omega\phi}{g} \left( 1 - \frac{\p_u \phi \p_{\ub} \phi }{4} \right) \p_\omega + \text{l.o.t.},
\end{align*}
and
\begin{align*}
e_3 &= \left( 1- \frac{ \p_u \phi \p_{\ub} \phi }{2g} \right) \left( \p_{\ub} + \frac{\p_{\ub }\phi \p_u \phi}{2g} \p_{\ub} + \frac{(\p_{\ub}\phi)^2}{2g} \p_u - \frac{\p_{\ub} \phi \p^\omega \phi}{g} \p_\omega \right) \\
&\quad - \frac{(\p_{\ub}\phi)^2}{4g} \p_u -  \frac{ \p_u \phi  (\p_{\ub}\phi)^3}{8g^2} \p_u - \frac{(\p_u\phi \p_{\ub} \phi)^2}{8g^2} \p_{\ub} +  \frac{ (\p_{\ub} \phi)^2 \p_u \phi \p^\omega \phi}{4g^2}  \p_\omega + \text{l.o.t.} \\
&= \p_{\ub} - \frac{3 (\p_u \phi \p_{\ub} \phi)^2  }{8g}  \p_{\ub} + \frac{(\p_{\ub}\phi)^2}{4g}  \left( 1- \frac{3\p_u \phi \p_{\ub} \phi }{2} \right) \p_u \\
& \quad - \frac{\p_{\ub} \phi \p^\omega \phi}{g} \left( 1- \frac{ 3\p_u \phi \p_{\ub} \phi }{4} \right)  \p_\omega+ \text{l.o.t.}.
\end{align*}

\subsubsection{The connection coefficient}\label{sec-connection}
We present some quantitative estimates for the connection in the null frame $\{e_3 ,e_4, e_A,  \, A =1,2 \}$.
\begin{corollary}\label{lem-D-4-3-omega-null-infty}
\begin{subequations}
\begin{align}
D_{4} e_3 &= O(r^{-3}) e_3  + O(r^{-3 - \frac{1}{2}}) e_B,  \label{D-e4-e3}\\
D_{3} e_4 &= O(r^{-3 - \frac{1}{2}}) e_4  + O(r^{-3 - \frac{1}{2}}) e_B, \label{D-e3-e4}\\
D_{A} e_3 &=   \frac{1}{r} e_A + O(r^{-4}) e_B  + O(r^{-3 - \frac{1}{2}}) e_3,  \label{D-A-3} \\
D_{A} e_4 &=  \left( - \frac{1}{r} + \frac{ (\p_u \phi)^2 }{4r} \right) e_A + O(r^{-3 - \frac{1}{2}}) e_B  + O(r^{-3 - \frac{1}{2}}) e_4, \label{D-A-4}  \\
D_{4} e_4 &= O(r^{-3}) e_4  + O(r^{-3}) e_A. \label{D-4-4-null-frame}
\end{align}
\end{subequations}
Here $e_B \in TS_{u,\ub}$, the tangent bundle of $S_{u, \ub}$.
\end{corollary}

\begin{remark}
In particular, \eqref{D-4-4-null-frame} suggests that the null vector field $e_4$ is close to the affine (incoming) null vector field.
As shown in the proof followed, our choice of $e_4$ generates a key cancellation \eqref{cancellation-D4-4-eA}, which provides fast decay rate for $D_{e_4} e_4$.
Otherwise, for generic choice of $e_4$, the cancellation \eqref{cancellation-D4-4-eA} does not necessarily hold true and thus we would only expect
\begin{equation}\label{eq-D-e4-e4-relaxed}
D_{4} e_4 = O(r^{-3}) e_4  + O(r^{-2-\frac{1}{2}}) e_A.
\end{equation}
which does not decay fast enough for our analysis later.
\end{remark}

\begin{proof}
We here only compute $g(D_A e_3, e_B)$, $g(D_A e_4, e_B)$ and $D_4 e_4$. The rest of proof is collected in Appendix \ref{sec-calculations}.

First of all,  there are $g(D_A e_3, e_B) = -g(D_A e_B, e_3)$ and $g(D_A e_4, e_B)= -g(D_A e_B, e_4)$.
Notice that $D_{A} e_B = e_A^\varphi \p_\varphi e_B^{\omega} \p_\omega + e_A^\varphi  e_B^{ \omega} \Gamma_{\varphi \omega}^\sigma \p_\sigma$. In view of \eqref{christoffel-g-eta} and  \eqref{christoffel-eta}, we have
\begin{align*}
e_A^{\varphi} e_B^{ \omega} \Gamma_{\varphi \omega}^\sigma \p_\sigma & = e_A^\varphi e_B^{\omega} \Gamma_{\varphi \omega}^\sigma (\eta) \p_\sigma  - g^{-1} e_A^\varphi e_B^{\omega} \Gamma_{\varphi \omega}^\sigma (\eta) \p_\sigma \phi  \p^\alpha \phi  \p_\alpha \\
&\quad + e_A^\varphi e_B^\omega  \p_\varphi \p_\omega \phi g^{\alpha \beta}\p_\beta \phi \p_\alpha,
\end{align*}
and
 \begin{equation}\label{eq-gamma-A-B-eta-p-phi}
 \begin{split}
e_A^{\varphi} e_B^{ \omega}  \Gamma_{\varphi \omega}^\sigma (\eta) \p_\sigma \phi &=e_A^{\varphi} e_B^{ \omega} \left(  \Gamma_{\varphi \omega}^\theta (\eta) \p_\theta \phi  +   \Gamma_{\varphi \omega}^u (\eta) \p_u \phi +   \Gamma_{\varphi \omega}^{\ub} (\eta) \p_{\ub} \phi  \right) \\
&=  e_A^{\varphi} e_B^{ \omega} \left( \Gamma_{\varphi \omega}^\theta (\eta) \p_\theta \phi +  \frac{\eta_{\varphi \omega}}{2r}  \p_u \phi - \frac{\eta_{\varphi \omega}}{2r}  \p_{\ub} \phi \right) = O(r^{-2}).
 \end{split}
 \end{equation}
 We remark that $g(\p_\sigma, e_\nu) = \eta(\p_\sigma, e_\nu) + \p_\sigma \phi e_\nu \phi$ and by the formulae of $e_4, \, e_3$,
 \begin{equation}\label{eta-coordinate-filed-null-frame}
 \begin{split}
 \eta (\p_{\ub}, e_4) &= -2 - \p_u \phi \p_{\ub} \phi + O(r^{-5}) = -2+ O(r^{-2-\frac{1}{2}}), \\
 \eta (\p_u, e_4) &=- \frac{1}{2} (\p_u \phi)^2 + O(r^{-4-\frac{1}{2}}), \quad  \eta (e_A, e_4) = - \p_u \phi e_A \phi + O(r^{-5}),\\
 \eta (\p_{\ub}, e_3) &= - \frac{1}{2} (\p_{\ub} \phi)^2 + O(r^{-5-\frac{1}{2}}) = O(r^{-3}),\\
\eta (\p_u, e_3) &=- 2 + O(r^{-5}), \quad   \eta (e_A, e_3) =-\p_{\ub} \phi e_A \phi + O(r^{-5 - \frac{1}{2} }).
 \end{split}
 \end{equation}
As a result, we achieve
\begin{align*}
e_A^{\varphi} e_B^{ \omega} \Gamma_{\varphi \omega}^\sigma g(\p_\sigma, e_4) &= e_A^\varphi e_B^{\omega} \Gamma_{\varphi \omega}^\theta (\eta) (-\p_u \phi \p_\theta \phi) + O(r^{-6})    \\
& \quad + e_A^\varphi e_B^{\omega} \Gamma_{\varphi \omega}^u (\eta) (-\frac{1}{2}(\p_u \phi)^2 + O(r^{-4-\frac{1}{2}}))    \\
& \quad   +  e_A^\varphi e_B^\omega \Gamma_{\varphi \omega}^{\ub} (\eta) (-2 - \p_u \phi \p_{\ub} \phi + O(r^{-5})) \\
& \quad + e_A^\varphi e_B^{\omega} \Gamma_{\varphi \omega}^\sigma (\eta) \p_\sigma \phi e_4 \phi  - g^{-1} e_A^\varphi e_B^{\omega} \Gamma_{\varphi \omega}^\sigma (\eta) \p_\sigma \phi e_4 \phi \\
&\quad - g^{-1} e_A^\varphi e_B^{\omega} \Gamma_{\varphi \omega}^\sigma (\eta) \p_\sigma \phi \p^{\alpha} \phi \p_{\alpha} \phi e_4 \phi \\
&\quad + e_A^\varphi e_B^\omega  \p_\varphi \p_\omega \phi e_4 \phi,
\end{align*}
where the 4th and 5th lines cancel:
\begin{align*}
&e_A^\varphi e_B^{\omega} \Gamma_{\varphi \omega}^\sigma (\eta) \p_\sigma \phi e_4 \phi  - g^{-1} e_A^\varphi e_B^{\omega} \Gamma_{\varphi \omega}^\sigma (\eta) \p_\sigma \phi e_4 \phi \\
&\quad - g^{-1} e_A^\varphi e_B^{\omega} \Gamma_{\varphi \omega}^\sigma (\eta) \p_\sigma \phi \p^{\alpha} \phi \p_{\alpha} \phi e_4 \phi =0.
\end{align*}
Therefore,
\begin{align*}
e_A^{\varphi} e_B^{ \omega} \Gamma_{\varphi \omega}^\sigma g(\p_\sigma, e_4) &= \frac{1}{r} e_A^\varphi e_B^\omega \eta_{\varphi \omega}  -  \frac{ (\p_u \phi)^2}{4r}  e_A^\varphi e_B^{\omega} \eta_{\varphi \omega} + \frac{ \p_u \phi \p_{\ub} \phi }{2r}  e_A^\varphi e_B^\omega \eta_{\varphi \omega}  \\
& \quad - e_A^\varphi e_B^{\omega} \Gamma_{\varphi \omega}^\theta (\eta) \p_\theta \phi  \p_u \phi + e_A^\varphi e_B^\omega  \p_\varphi \p_\omega \phi e_4 \phi + O(r^{-5-\frac{1}{2}}).
\end{align*}
And in the same way, there is
\begin{align*}
e_A^{\varphi} e_B^{ \omega} \Gamma_{\varphi \omega}^\sigma g(\p_\sigma, e_3) &= e_A^\varphi e_B^{\omega} \Gamma_{\varphi \omega}^\theta (\eta) (-\p_{\ub} \phi \p_\theta \phi) +  O(r^{-6- \frac{1}{2} })    \\
& \quad + e_A^\varphi e_B^{\omega} \Gamma_{\varphi \omega}^u (\eta) (-2 + O(r^{-5}))    \\
& \quad   +  e_A^\varphi e_B^\omega \Gamma_{\varphi \omega}^{\ub} (\eta) (- \frac{1}{2} (\p_{\ub} \phi)^2 + O(r^{-5-\frac{1}{2}})) \\
& \quad + e_A^\varphi e_B^{\omega} \Gamma_{\varphi \omega}^\sigma (\eta) \p_\sigma \phi e_3 \phi  - g^{-1} e_A^\varphi e_B^{\omega} \Gamma_{\varphi \omega}^\sigma (\eta) \p_\sigma \phi e_3 \phi  \\
&\quad  - g^{-1} e_A^\varphi e_B^{\omega} \Gamma_{\varphi \omega}^\sigma (\eta) \p_\sigma \phi \p^{\alpha} \phi \p_{\alpha} \phi e_3 \phi \\
&\quad + e_A^\varphi e_B^\omega  \p_\varphi \p_\omega \phi e_3 \phi,
\end{align*}
where the 4th and 5th lines cancel:
\begin{align*}
& e_A^\varphi e_B^{\omega} \Gamma_{\varphi \omega}^\sigma (\eta) \p_\sigma \phi e_3 \phi  - g^{-1} e_A^\varphi e_B^{\omega} \Gamma_{\varphi \omega}^\sigma (\eta) \p_\sigma \phi  e_3 \phi \\
&\quad  - g^{-1} e_A^\varphi e_B^{\omega} \Gamma_{\varphi \omega}^\sigma (\eta) \p_\sigma \phi \p^{\alpha} \phi \p_{\alpha} \phi e_3 \phi =0.
\end{align*}
Consequently, we obtain
\begin{align*}
e_A^{\varphi} e_B^{ \omega} \Gamma_{\varphi \omega}^\sigma g(\p_\sigma, e_3) &=-\frac{1}{r} e_A^\varphi e_B^\omega \eta_{\varphi \omega} + \frac{1}{4r}  (\p_{\ub} \phi)^2 e_A^\varphi e_B^{\omega} \eta_{\varphi \omega} \\
& \quad - e_A^\varphi e_B^{\omega} \Gamma_{\varphi \omega}^\theta (\eta) \p_\theta \phi \p_{\ub} \phi  + e_A^\varphi e_B^\omega  \p_\varphi \p_\omega \phi e_3 \phi +O(r^{-6}).
\end{align*}
As a summary, we arrive at
\begin{subequations}
\begin{align}
e_A^{\varphi} e_B^{ \omega} \Gamma_{\varphi \omega}^\sigma g(\p_\sigma, e_4) &= \left( \frac{1}{r}  -  \frac{ (\p_u \phi)^2}{4r}  \right) g_{AB} + \frac{ \p_u \phi \p_{\ub} \phi }{2r}  g_{AB} - \nablaslash_A \nablaslash_B \phi e_4 \phi + O(r^{-4}), \label{gamma-A-B-e4} \\
e_A^{\varphi} e_B^{ \omega} \Gamma_{\varphi \omega}^\sigma g(\p_\sigma, e_3)
 &= - \frac{1}{r}g_{AB} + \frac{ (\p_{\ub} \phi)^2}{4r} g_{AB}  + \frac{e_A \phi e_B \phi }{r} - \nablaslash_A \nablaslash_B \phi e_3 \phi + O(r^{-6}).\label{gamma-A-B-e3}
\end{align}
\end{subequations}

We now calculate $g(D_A e_B, e_3)$ and $g(D_A e_B, e_4)$.
By virtue of \eqref{gamma-A-B-e4},
\begin{align*}
g(D_{A} e_B, e_4) &= e_A^\varphi \p_\varphi e_B^{\omega} g(\p_{\omega}, e_4) + e_A^\varphi  e_B^{ \omega} \Gamma_{\varphi \omega}^\sigma  g(\p_\sigma, e_4) \\
&=  \left( \frac{1}{r}  -  \frac{ (\p_u \phi)^2}{4r}  \right) g_{AB}  +  O(r^{-3- \frac{1}{2}}),
\end{align*}
where we notice that $ e_A^\varphi \p_\varphi e_B^{\omega} \p_\omega \phi e_4 \phi = O(r^{-3 - \frac{1}{2}})$ and $ \frac{ \p_u \phi \p_{\ub} \phi }{2r} g_{AB} = O(r^{-3- \frac{1}{2}})$.
That is,
\begin{equation}\label{eq-DA-4-B}
g(D_{A} e_4, e_B) = - g(D_{A} e_B, e_4) =  \left( - \frac{1}{r}  +  \frac{ (\p_u \phi)^2}{4r}  \right) g_{AB}  +  O(r^{-3- \frac{1}{2}}).
\end{equation}
Using \eqref{gamma-A-B-e3}, we have
\begin{align*}
g(D_{A} e_B, e_3) &= e_A^\varphi \p_\varphi e_B^{\omega} g(\p_{\omega}, e_3) + e_A^\varphi  e_B^{ \omega} \Gamma_{\varphi \omega}^\sigma  g(\p_\sigma, e_3) \\
&= - \frac{1}{r} g_{AB}  +  O(r^{-4}),
\end{align*}
where we have used $e_A^\varphi \p_\varphi e_B^{\omega} \p_\omega \phi e_3 \phi =O(r^{-4})$ and $ \frac{ (\p_{\ub} \phi)^2}{4r} g_{AB}  + \frac{e_A \phi e_B \phi }{r} - \nablaslash_A \nablaslash_B \phi e_3 \phi = O(r^{-4})$. And hence,
\begin{equation}\label{eq-DA-3-B}
g(D_{A} e_3, e_B) = - g(D_{A} e_B, e_3) = \frac{1}{r} g_{AB}  +  O(r^{-4}).
\end{equation}

With the help of \eqref{eq-DA-4-B}, \eqref{eq-DA-4-3} and \eqref{eq-DA-3-B}, \eqref{eq-DA-3-4},  \eqref{D-A-4} and \eqref{D-A-3} follow.

We next turn to $D_{e_4} e_4 = e_4^\mu \p_\mu e_4^{\nu} \p_\nu + e_4^\mu  e_4^{ \nu} \Gamma_{\mu \nu}^\sigma \p_\sigma$. From \eqref{christoffel-g-eta} and  \eqref{christoffel-eta}, we see that
\begin{align*}
e_4^\mu  e_4^{ \nu} \Gamma_{\mu \nu}^\sigma \p_\sigma & = e_4^{\mu} e_4^{ \nu} \Gamma_{\mu \nu}^\sigma (\eta) \p_\sigma  - g^{-1} e_4^{\mu} e_4^{ \nu} \Gamma_{\mu \nu}^\sigma (\eta) \p_\sigma \phi  \p^\alpha \phi  \p_\alpha \\
&\quad +e_4^{\mu} e_4^{ \nu} \p_\mu \p_\nu \phi g^{\alpha \beta}\p_\beta \phi \p_\alpha,
\end{align*}
and
\begin{align*}
e_4^\mu  e_4^{ \nu} \Gamma_{\mu \nu}^\sigma (\eta) \p_\sigma \phi & = \left( 2 e_4^\omega e_4^{\ub} \Gamma_{\omega \ub}^\theta (\eta) + 2 e_4^\omega e_4^{u} \Gamma_{\omega u}^\theta (\eta) + e_4^\omega e_4^\varphi \Gamma_{\omega \varphi}^\theta (\eta) \right) \p_\theta \phi \\
&\quad +  e_4^\omega e_4^{\theta}\Gamma_{\omega \theta}^u (\eta) \p_u \phi +  e_4^\omega e_4^{\theta}\Gamma_{\omega \theta}^{\ub} (\eta) \p_{\ub} \phi  \\
&= \frac{(\p_u \phi)^2}{r g^2}  |\nablaslash \phi|^2 \p_u \phi - \frac{(\p_u \phi)^2 }{r g^2} |\nablaslash \phi|^2 \p_{\ub} \phi + \text{l.o.t.}\\
&\quad + \left( 2 \frac{ \p_u \phi \p^\theta \phi}{r g}  - \frac{ (\p_{u} \phi)^3 \p^\theta \phi}{2rg} + (\p_u \phi)^2 \p^\omega \phi \p^\varphi \phi \Gamma_{\omega \varphi}^\theta (\eta) \right)  \p_\theta \phi\\
& = O(r^{-5}).
\end{align*}
It follows that
\begin{equation}\label{gamma-4-4-eA}
\begin{split}
e_4^\mu  e_4^{ \nu} \Gamma_{\mu \nu}^\sigma g(\p_\sigma, e_A) &=e_4^\mu  e_4^{ \nu} \Gamma_{\mu \nu}^{\theta} (\eta) \eta_{\theta A} + e_4^{\mu} e_4^{ \nu}  \p_\mu \p_\nu \phi e_A \phi +  O(r^{-6-\frac{1}{2}}),\\
&= \frac{2}{r} \p_u \phi e_A \phi + e_4^{\mu} e_4^{ \nu}  \p_\mu \p_\nu \phi e_A \phi +  O(r^{-5-\frac{1}{2}}).
\end{split}
\end{equation}

Substituting the formula of $e_4$ and making use of \eqref{gamma-4-4-eA}, we derive
\begin{align*}
g(D_{4} e_4,e_B) &= e_4^\mu \p_\mu e_4^\nu g(\p_\nu, e_B) +  e_4^\mu e_4^\nu \Gamma_{\mu \nu}^\sigma g(\p_\sigma, e_B) \\
&= e_4  \frac{(\p_u \phi)^2}{4g} g_{\ub B} +  e_4 \left( \frac{\p_{\ub} \phi \p_u \phi}{2g} \right) g_{u B} -  e_4 \frac{\p_u \phi \p^\omega\phi}{g} \left( 1 - \frac{\p_u \phi \p_{\ub} \phi }{4} \right) g_{\omega B} \\
& \quad +e_4^{\mu} e_4^{ \nu}  \p_\mu \p_\nu \phi e_B \phi +2 r^{-1} \p_u \phi e_B \phi  + o(r^{-5}).
\end{align*}
As a consequence,
\begin{equation}\label{cancellation-D4-4-eA}
\begin{split}
g(D_4  e_4, e_B) &  = -  e_4 \frac{\p_u \phi \p^\omega\phi}{g} g_{\omega B}  + e_4^{\mu} e_4^{ \nu}  \p_\mu \p_\nu \phi e_B \phi + 2 r^{-1} \p_u \phi e_B \phi + O(r^{-5}) \\
&= -\p_u \phi  e_4 \p^\omega\phi g_{\omega B} +2 r^{-1} \p_u \phi e_B \phi  + O(r^{-5}),
\end{split}
\end{equation}
which implies
\begin{equation}\label{eq-D4-4-B}
g(D_4  e_4, e_B) =-e_4^\mu e_B^\omega \p_\mu \p_\omega\phi  \p_u \phi   + O(r^{-5})  = O(r^{-3}).
\end{equation}
We should remark that,  the cancellation $-  e_4 \frac{\p_u \phi \p^\omega\phi}{g} g_{\omega B}  + e_4^{\mu} e_4^{ \nu}  \p_\mu \p_\nu \phi e_B \phi = -\p_u \phi  e_4 \p^\omega\phi g_{\omega B} + O(r^{-5})$ in \eqref{cancellation-D4-4-eA} is crucial. If this is ignored, we would obtain $g(D_4  e_4, e_B) =O(r^{-2-\frac{1}{2}})$,
which is not good enough.

On the other hand, based on \eqref{eq-D4-3-4},
\begin{equation}\label{eq-D4-4-3}
 g(D_4  e_4, e_3) = -g(D_4  e_3, e_4) = -e_4^{\mu} e_3^{ \nu}  \p_\nu \p_\mu \phi e_4 \phi+O(r^{-5}) = O(r^{-3}).
\end{equation}
Combining \eqref{eq-D4-4-B} with \eqref{eq-D4-4-3}, we prove \eqref{D-4-4-null-frame}.

\end{proof}

The second fundamental forms of $S_{u, \ub}$ along $e_3, \, e_4$ are defined as
\begin{equation*}
\chi_{AB} = g(e_A, D_{e_B} e_3), \quad \underline{\chi}_{AB} = g(e_A, D_{e_B} e_4).
\end{equation*}
Since $\{e_A, e_B\} \subset TS_{u,\ub}$, where $ TS_{u,\ub}$ is the tangent bundle of $S_{u, \ub}$, we have, by the Frobenius theorem, $\chi_{AB}$ and $\underline{\chi}_{AB}$ are both symmetric tensors on $S_{u, \ub}$. Let $\gslash_{AB}$ be the metric on $S_{u, \ub}$ induced from $g_{\mu \nu}$ and define the trace and traceless parts:
\begin{align*}
&\text{tr} \chib \doteq \gslash^{AB} \underline{\chi}_{AB}, &\hat \chib_{AB} \doteq \chib_{AB} - \frac{1}{2}\text{tr} \underline{\chi} \gslash_{AB}, \\
& \text{tr} \chi \doteq\gslash^{AB} \chi_{AB}, &\hat \chi_{AB} \doteq \chi_{AB} - \frac{1}{2}\text{tr} \chi \gslash_{AB},
\end{align*}
where $\gslash^{AB} \doteq \gslash^{-1}_{AB}$ is the inverse of $\gslash_{AB}$. We remark that $g^{\omega \theta} \neq  \gslash^{\omega \theta}$. In fact,
\begin{equation*}
\gslash^{\omega \theta}=\frac{1}{( \det (\gslash_{\omega \theta}) )^2 } \left(
\begin{array}{cc}
g_{\theta \theta} & - g_{\theta \omega}\\
- g_{\omega \theta} & g_{\omega \omega}
\end{array}\right).
\end{equation*}

Recall that $\gamma = \di \theta^2 + \sin^2 \theta \di \varphi^2$ is the standard metric on $S^2$.  We can calculate $\det ( \gslash_{\omega \theta}) = \det ( \eta_{\omega \theta} + \p_\omega \phi \p_{\theta} \phi ) =  r^{4} \det( \gamma_{\omega \theta}) \left( 1+ \eta^{\omega \theta} \p_\omega \phi \p_{\theta} \phi \right)$.
Then the area of $S_{u,\ub}$ is
\begin{equation*}
4 \pi \bar{r}^2 \doteq  \int_{S_{u,\ub}} \di \mu_{\gslash} =\int_{S_{u,\ub}}  r^{2}  \sqrt{ 1+ \eta^{\omega \theta} \p_\omega \phi \p_{\theta} \phi } \di \mu_{\gamma},
\end{equation*}
which entails
\begin{equation}\label{compare-bar-r-r}
4 \pi \bar{ r}^2 = 4 \pi r^2 + O(r^{-1}). 
\end{equation}

As a result of \eqref{D-A-3} and \eqref{D-A-4}, we have the following lemma.
\begin{lemma}\label{lem-2nd-form-1}
The second fundamental forms along $e_3$ and $e_4$  obey
\begin{subequations}
\begin{align}
\text{tr} \underline{\chi} & =  -\frac{2}{r}  +O(r^{-3}),  &\hat{\chib}_{AB}  =  O(r^{-3}),  \label{lem-chi-b-AB-trace-app} \\
\text{tr} \chi & = \frac{2}{r} + O(r^{-4}),   &\hat{\chi}_{AB}  =  O(r^{-4}).   \label{chi-AB-trace-app}
\end{align}
\end{subequations}
More explicitly, there is
\begin{equation}\label{eq-tr-chib}
\text{tr} \underline{\chi}  =  -\frac{2}{r}  + \frac{(\p_u\phi)^2}{2r} +O(r^{-3-\frac{1}{2}}).
\end{equation}
\end{lemma}

In the null frame $\{e_3, e_4, e_A, A=1,2\}$, the membrane equation $\Box_g \phi = -D_3 D_4 \phi + \gslash^{A B} D_A D_{B} \phi =0$ can be split into
\begin{equation}\label{eq-membrane-null-frame}
-D_3 D_4 \phi  - \frac{1}{2}  \tr\chib e_3 \phi - \frac{1}{2}  \tr \chi e_4\phi + \laplacianslash_{\gslash}\phi = 0,
\end{equation}
where $\laplacianslash_{\gslash}$ is the Laplacian with respect to $\gslash$. As a remark, taking advantage of the membrane equation \eqref{eq-membrane-null-frame}, we have
 \begin{equation}\label{eq-D3D4-DADB-phi-estimate}
D_3 D_4 \phi = \gslash^{A B} D_A D_{B} \phi = - \frac{1}{r} e_4\phi + O(r^{-2-\frac{1}{2}}).
\end{equation}

\subsubsection{Null infinity of the membrane}
Let $s$ be the parameter of $e_3$  such that $e_3(s) = 1$.
And let $\sigma_q(s)$ be the integral curve of $e_3$ starting from $q$. Namely, $\frac{\di \sigma_q(s)}{\di s} =e_3,$ with the initial condition $\sigma_q(0) = q$. We will drop the subscription $q$ to denote the curve by $\sigma(s)$ for convenience.

Let $\Psi_s: \mathcal{M} \longrightarrow \mathcal{M}$ be the 1-parameter group generated by $e_3$ as follows. For any $q \in \mathcal{M}$, we define $\Psi_s (q) = \sigma_q(s)$. $\Psi_0$ is the identity map. It is easy to check that $\Psi_s \circ \Psi_{\tau} = \Psi_{s+\tau}$.

The estimates below are done along each curve $\sigma(s)$.
\begin{corollary}\label{coro-along-e3}
Along $\sigma(s)$, there is an $s_0$ large enough, such that for all $s > s_0$, $\ub(s) \sim s$.
\end{corollary}
\begin{proof}
Along the curve $\sigma(s)$, there is
\begin{equation*}
\frac{\di \ub(s)}{\di s} = e_3\left( \ub \right)= 1+ \varepsilon^2 O\left( \ub^{-5} \right).
\end{equation*}
Then we find that $\ub(s) \rightarrow +\infty,$ as $s \rightarrow +\infty$. Additionally,
\begin{align*}
\lim_{s \rightarrow +\infty} \frac{\ub(s)}{s} = \lim_{s \rightarrow +\infty} \frac{\di \ub(s)}{\di s} =1.
\end{align*}
Then there exists an $s_0$ large enough, such that for all $s > s_0$, $\frac{1}{2} < \frac{\ub(s)}{s} <\frac{3}{2}$, namely, $\ub(s)$ and $s$ are equivalent.
\end{proof}

We notice that along  $\sigma(s)$, there is $\lim\limits_{s \rightarrow +\infty} e_4 = \lim\limits_{\ub \rightarrow +\infty} e_4 = \p_u$, hence we can also parametrize the null infinity of the membrane geometry $\mathcal{I}^+$ by $(u, \omega) \in I \times S^2$  and identify $\mathcal{I}^+$ as $ r \rightarrow + \infty$.

For any $f \in C^\infty(\mathcal{M})$, if $ \lim\limits_{s \rightarrow \infty} f (\Psi_s(p))$ exists, we can define $f^\ast \in C^\infty (\mathcal{I}^+)$ by: $\forall q \in \mathcal{I}^+$, there exists a point $p \in \mathcal{M}$ such that $q = \lim\limits_{s \rightarrow \infty} \Psi_s (p)$, then $f^\ast (q) =  \lim\limits_{s \rightarrow \infty} f (\Psi_s(p))$. Alternatively, it reads in coordinates as $f^\ast (u, \omega) =  \lim\limits_{\ub \rightarrow \infty} f (\ub, u, \omega) =  \lim\limits_{r \rightarrow \infty} f (r, u, \omega)$.
And we have $(e_4 f)^\ast  =\p_u  f^\ast$, i.e., $ \lim\limits_{r \rightarrow \infty} e_4 f =\p_u \left( \lim\limits_{r \rightarrow \infty}  f \right)$.
Due to the decay result \eqref{decay-mem}, we have
\begin{corollary}\label{coro-radiation}
The following limitations exist,
\begin{equation}\label{def-Xi-Sigma-1-Psi}
\Xi_1\doteq \lim\limits_{r \rightarrow +\infty} \left( r  e_4  \phi \right), \quad \Xi_2 \doteq \lim\limits_{r \rightarrow +\infty} \left( r e^2_4  \phi \right) = \p_u \Xi_1,
\end{equation}
where $\Xi_1, \, \Xi_2 \in C^\infty(\mathcal{I}^+)$.
\end{corollary}

\subsection{Expanding effect}\label{sec-expanding}
\subsubsection{Expansion (along $e_4$) projected onto $S_{u,\ub}$}\label{sec-expanding-S}
Lemma \ref{lem-2nd-form-1} (specifically \eqref{eq-tr-chib}) tells an expanding effect along the bundle of flow lines of $e_4$ near null infinity (see Remark \ref{rk-expanding}).
\begin{theorem}\label{lem-2nd-form}
Defining $\hb \doteq r \text{tr}  \chib + 2$, we have
\begin{equation}\label{asym-hb}
 \hb=\frac{\Hb}{r^2} + O(r^{-2-\frac{1}{2}}) \quad \text{with} \quad \Hb = \frac{1}{2} (\Xi_1)^2.
\end{equation}
Namely, $\lim\limits_{r \rightarrow \infty} r^2 \hb = \Hb >0$.
\end{theorem}
\begin{remark}\label{rk-expanding}
 As we know, $\hb=0$ in Minkowski spacetime.
Therefore, \eqref{asym-hb} with $\Hb > 0$ is related to the nonlinear effect of $g_{\mu \nu}(\p\phi)$. Meanwhile, $\chib_{AB}$ is related to shape of the cross-sectional area enclosing a fixed bundle of flow lines of $e_4$ projected onto $S_{u, \ub}$. In particular, $\text{tr} \chib$ is nothing but the rate of change of the volume of the projected (onto $S_{u, \ub}$) cross-sectional area of the bundle of flow lines of $e_4$.
 Thus, here $\Hb > 0$ will cause distortions and expansions for the bundles of flow lines along $e_4$ near null infinity of the membrane, see Figure \ref{fig:expanding}.
\end{remark}

In what follows, we will investigate the Raychaudhuri equation (see Lemma \ref{lem-Raychaudhuri} and refer to \cite{Hawking-Ellis}) to justify that the expanding effect shown in Theorem \ref{lem-2nd-form} is caused by the curvature of the membrane. We shall first introduce an important lemma.

\begin{lemma}\label{coro-main-Ray-eq}
We have
\begin{equation}\label{Ray-5}
 D_{4} \tr \chib = - \frac{1}{2}\tr^{2} \chib - R_{4 A 4 B} \gslash^{AB} +O(r^{-4}).
\end{equation}
\end{lemma}
\begin{remark}\label{rk-key-cancellation}
If we chose generic $e_4$ such that only the weak decay estimate \eqref{eq-D-e4-e4-relaxed} holds for $D_4 e_4$, then instead of \eqref{Ray-5}, we would have here
 \begin{equation*}
D_{4} \tr \chib = - \frac{1}{2}\tr^{2} \chib - R_{4 A 4 B} \gslash^{AB} +O(r^{-3-\frac{1}{2}}),
\end{equation*}
which will not be good enough for our purpose, i.e., to infer Theorem \ref{thm-memory}.
\end{remark}

\begin{proof}
Based on the Raychaudhuri equation \eqref{Ray-4} along $e_4$, we need to show that the error terms $ - \gslash^{AB} \gslash^{CD} \hat \chib_{C B} \hat \chib_{D A} +\gslash^{AB}  \left( D_A D_{4} e_{4B} + \frac{1}{2 } \left( 2g \left( D_A e_4, e_3 \right) + D_{3} e_{4A} + D_{4} e_{3A}  \right) D_{4}  e_{4B} \right)$ are $O(r^{-4})$.

At first, by \eqref{eq-D4-4-B}, \eqref{D-A-4} and \eqref{D-4-4-null-frame}
\begin{align*}
\gslash^{AB} D_A D_{4} e_{4B} & = \gslash^{AB} e_A g\left( D_4 e_4, e_B \right) -  \gslash^{AB} g \left(  D_{4} e_4, D_A e_B \right) \\
& = - \gslash^{AB} e_A (e_4^\mu e_B^\omega \p_\mu \p_\omega \phi \p_u \phi) -  \gslash^{AB} \cdot g(D_4 e_4, \Dslash_A e_B) \\
& \quad +  r^{-1} \cdot g(D_4 e_4, e_3) +O (r^{-5}) = O(r^{-4}).
\end{align*}
Here, we use $\Dslash$ to denote the connection associated to $\gslash$, then $\Dslash_A e_B \in TS_{u, \ub}$ and $g(\Dslash_A e_B, \Dslash_A e_B) \sim \frac{1}{r^2}$.

Next, by virtue of Corollary \ref{lem-D-4-3-omega-null-infty}, the remaining quadratic terms share the estimates
 $$\frac{1}{2 } \gslash^{AB} \left( 2g \left( D_A e_4, e_3 \right) + D_{3} e_{4A} + D_{4} e_{3A}  \right) D_{4}  e_{4B} = O(r^{-6-\frac{1}{2}})$$ and $ \gslash^{AB} \gslash^{CD} \hat \chib_{C B} \hat \chib_{D A} = O(r^{-6})$. Then, these estimates together with \eqref{Ray-4} yield \eqref{Ray-5}.
\end{proof}

Thanks to Lemma \ref{coro-main-Ray-eq}, we are now in a position to prove the following theorem, which indicates that nonlinear effect: $\Hb = \frac{1}{2} (\Xi_1)^2 >0$ is due to the curved geometry of membrane.
\begin{theorem}\label{thm-memory}
Let $\Hb$ be defined as that in Theorem \ref{lem-2nd-form}, then
\begin{equation}\label{Ray-weigh-lim-0-1}
\frac{\p}{\partial u} \Hb = -  \lim\limits_{r \rightarrow \infty}  r^{3} R_{4  4} = \frac{1}{2} \partial_u (\Xi_1)^2.
\end{equation}
\end{theorem}
\begin{proof}
We consider
\begin{equation}\label{eq-D4-r2-hb}
\begin{split}
 &\quad D_{4}  \left(  r^{2} \hb \right)  = D_{4}  \left(  r^{3} \tr \chib + 2  r^{2} \right) \\
 &=3  r^{2} D_4 r \cdot \tr \chib +   r^{3} D_4 \tr \chib  +  4   r D_4 r.
\end{split}
\end{equation}
Meanwhile, in view of Lemma \ref{lem-2nd-form-1} and the formula for $e_4$, we have $D_4 r = \frac{ r }{2} \tr \chib+O(r^{-2}).$  What is more, substituting \eqref{Ray-5} into \eqref{eq-D4-r2-hb},
and recalling that $\hb =O(r^{-2})$ and $\tr \underline\chi \sim -\frac{2}{r}$, we achieve
\begin{equation}\label{Ray-weigh-2}
\begin{split}
D_4  \left(   r^{2} \hb \right) &=  r^2 \tr \underline\chi \hb  -   r^{3}  R_{4 A 4 B} \gslash^{A B}   +  O(r^{-1}) \\
 &= -   r^{3} R_{4 A 4 B} \gslash^{A B}  + O(r^{-1}).
\end{split}
\end{equation}
Since $ \lim\limits_{r \rightarrow \infty} D_4 \left(  r^{2} \hb \right) = \frac{\p}{\partial u} \Hb$, \eqref{Ray-weigh-2} leads to
\begin{equation}\label{Ray-weigh-lim}
 \frac{\p}{\partial u} \Hb = -  \lim\limits_{r \rightarrow \infty}   r^{3}  R_{4A 4B} \gslash^{AB}.
\end{equation}
For the curvature term above, we make use of the Gauss equation \eqref{eq-Gauss} to derive
\begin{align*}
 R_{4 A 4 B} \gslash^{A B} & = k_{44} k_{A B}  \gslash^{A B} - k_{4A} k_{4 B}  \gslash^{A B}\\
 &= g^2 \left( D_4 D_4 \phi \cdot D_A D_{B} \phi - D_{A} D_4 \phi \cdot D_{B} D_4 \phi \right)  \gslash^{A B}.
\end{align*}
The decay rates for $e_A e_4 \phi, \, e_A \phi$ and $D_Ae_4$ (see Corollary \ref{lem-D-4-3-omega-null-infty}) suggest that $D_{A} D_4 \phi \cdot D_{B} D_4 \phi  \gslash^{A B} = O(r^{-4})$, which further implies
\begin{equation*}
 -  \lim\limits_{r \rightarrow \infty}  r^{3} R_{4 A 4 B} \gslash^{A B} =  -  \lim\limits_{r \rightarrow \infty} r^{3} D_4 D_4 \phi \cdot D_A D_{B} \phi  \gslash^{A B}.
\end{equation*}
We remind ourselves \eqref{eq-D3D4-DADB-phi-estimate}, namely $\gslash^{A B} D_A D_{B} \phi = - \frac{1}{r} e_4\phi +O(r^{-2-\frac{1}{2}})$. Hence
\begin{equation*}
 -  \lim\limits_{r \rightarrow \infty} r^{3} R_{4A 4 B} \gslash^{A B} =  \lim\limits_{r \rightarrow \infty} r^{2} D_4 D_4 \phi \cdot D_4\phi = \frac{1}{2} \partial_u (\Xi_1)^2.
\end{equation*}
Obviously, there is  $R_{4 A 4 B} \gslash^{A B} = R_{4 4}$, and thus we achieve \eqref{Ray-weigh-lim-0-1}.

\end{proof}

We continue with another interpretation for the non-negativity of $\Hb$ (Theorem \ref{lem-2nd-form}).
Mimicking the definition of Hawking mass, we also define an analogous mass $m$ for each sphere $S_{u, \ub}$:
\begin{equation}\label{def-Hawking-mass}
m \doteq \frac{r}{2} \left(1+\frac{1}{16\pi} \int_{S_{u,\ub}} \text{tr} \chi \text{tr} \chib \di \mu_{\gslash} \right).
\end{equation}
From Lemma \ref{lem-2nd-form-1}, it is easy to see that $\lim\limits_{r \rightarrow \infty} m =0$, i.e. the ``Bondi mass'' of each outgoing null hypersurface (close to $C_u$) vanishes.
Now look at $ r m$,
\begin{align*}
r m 
&= \frac{1}{2} \left(r^2 +\frac{1}{16 \pi} \int_{S_{u,\ub}} (h+2) (\hb -2) \di \mu_{\gslash} \right)  \\
&= \frac{1}{2} \left(r^2-\bar{r}^2 \right) + \frac{1}{16\pi} \int_{S_{u,\ub}} \left( \hb - h + \frac{h \hb}{2} \right)  \di \mu_{\gslash}\\
&= O(r^{-1}) + \frac{1}{16\pi} \int_{S_{u,\ub}} \hb \cdot r^2 \di \mu_{\gamma},
\end{align*}
where \eqref{compare-bar-r-r} and Lemma \ref{lem-2nd-form-1} are used. And here $h = r \tr \chi -2$.
Defining $M  \doteq \lim\limits_{r \rightarrow \infty} r m $, we have
\begin{equation*}
M = \frac{1}{16 \pi} \int_{S^2} \Hb \di \mu_{\gamma} = \frac{1}{32 \pi} \int_{S^2} (\Xi_1)^2 \di \mu_{\gamma} >0.
\end{equation*}
Equivalently, the ``Hawking mass'' of $S_{u, \ub}$ enjoys the following asymptotic behavior $$m= \frac{M}{r} + O(r^{-2}) \quad \text{with} \quad M>0.$$
In addition,
\begin{equation*}
\frac{\p M}{\p u} = - \frac{1}{16 \pi} \int_{S^2} \left( \lim\limits_{r \rightarrow \infty}  r^{3} R_{4  4} \right) \di \mu_{\gamma} = \frac{1}{32 \pi} \int_{S^2} \p_u (\Xi_1)^2 \di \mu_{\gamma}.
\end{equation*}

\subsubsection{An asymptotic form of $D_4e_3$}\label{sec-expanding-e3}
In this section, we will show how $D_4 e_3$ is related to $\Hb >0$.

Note that $$D_4 e_3 = -\frac{1}{2} g(D_4 e_3, e_4) e_3 + g(D_4 e_3, e_A) g^{AB} e_B.$$ As we have shown in Corollary \ref{lem-D-4-3-omega-null-infty}, $g(D_4 e_3, e_A) = O(r^{-3 - \frac{1}{2}})$.
We are now concerning $-\frac{1}{2} g(D_4 e_3, e_4)$.
Recall that in the proof leading to Corollary \ref{lem-D-4-3-omega-null-infty}, \eqref{eq-D4-3-4} is derived, namely $g(D_4  e_3, e_4)= e_4^{\mu} e_3^{ \nu}  \p_\nu \p_\mu \phi e_4 \phi+O(r^{-5})$. It follows that
\begin{align*}
g(D_4  e_3, e_4) 
&=D_3 D_4 \phi e_4 \phi  +  e_4^{\mu} e_3^{ \nu} \Gamma_{\mu \nu}^\sigma \p_\sigma \phi e_4 \phi +O(r^{-5})\\
&=D_3 D_4 \phi e_4 \phi + O(r^{-4 - \frac{1}{2}}),
\end{align*}
where \eqref{Gamma-3-4-p-phi} is used.
Furthermore, by the membrane equation, we have obtained $D_3D_4 \phi = -\frac{1}{r} e_4 \phi + O(r^{-2 -\frac{1}{2}})$ \eqref{eq-D3D4-DADB-phi-estimate}, which yields
\begin{equation}\label{density-positive}
-\frac{1}{2} g(D_4  e_3, e_4) =\frac{1}{2r} (e_4 \phi )^2 + O(r^{-3- \frac{1}{2}}).
\end{equation}
An alternative expression of \eqref{density-positive} is
\begin{equation}\label{density-negative-asym}
\Lambda \doteq \lim\limits_{r \rightarrow \infty} -\frac{r^3 }{2} g(D_4  e_3, e_4) = \frac{1}{2} (\Xi_1)^2 = \Hb.
\end{equation}
As a consequence,
\begin{equation}\label{asy-D4-3}
D_4  e_3 = \frac{\Lambda}{r^3} e_3 + O( r^{-3-\frac{1}{2}}) e_3 + O( r^{-3-\frac{1}{2}}) e_B  \quad \text{with} \quad \Lambda = \Hb.
\end{equation}
At the same time, as presented in Theorem \ref{thm-memory}, we conclude
\begin{equation}\label{eq-pu-density-Lambda}
\frac{\p}{\partial u} \Lambda = \frac{1}{2} \partial_u (\Xi_1)^2 = - \lim\limits_{r \rightarrow \infty} r^{3} R_{4  4}.
\end{equation}
And $ \lim\limits_{r \rightarrow \infty} -\frac{r^3}{2} g(D_4D_4 e_3, e_4) = \frac{\p  \Lambda}{\partial u}$.

\appendix

\section{Some calculations}\label{sec-calculations}
\subsection{Connection coefficients}\label{sec-connection-II}
We give some quantitative estimates for the Christoffel symbol $\Gamma_{\mu \nu}^\alpha$ of the metric $g_{\mu \nu}$.
\begin{proposition}\label{lem-connection}
In the short pulse region II,
\begin{align*}
\Gamma_{\ub u}^{u} & 
\sim \delta |\ub|^{2 d_n - \frac{3}{2}}, &
\Gamma_{\ub u}^{\ub} 
\sim |\ub|^{2d_n-1}, \\
 \Gamma_{u u}^{\ub} & 
 \sim \delta^{-1} |\ub|^{2d_n},
& \Gamma_{u u}^u 
\sim |\ub|^{2d_n -\frac{1}{2}},\\
\Gamma_{\ub \ub}^{u} &\sim  \delta^{2} |\ub|^{2d_n -2}, & \Gamma_{\ub \ub}^{\ub}  \sim \delta |\ub|^{2d_n-  \frac{3}{2}}, \\
e_A^\omega \Gamma_{u \omega}^{u} & \sim  \delta |\ub|^{2d_n-\frac{3}{2}},  & e_A^\omega \Gamma_{u \omega}^{\ub} \sim |\ub|^{2d_n-1}, \\
e_A^\omega \Gamma_{\ub \omega}^{u} & \sim  \delta^{\frac{7}{4}} |\ub|^{2d_n-2}, & e_A^\omega \Gamma_{\ub \omega}^{\ub}  \sim  \delta^{\frac{3}{4}} |\ub|^{2d_n-\frac{3}{2}}, \\
\Gamma_{\ub u}^\theta \p_\theta & \sim  \delta^{\frac{3}{4}} |\ub|^{2d_n-\frac{3}{2}} \nablaslash, & \Gamma_{\ub \ub}^\theta \p_\theta \sim \delta^{\frac{7}{4}} |\ub|^{2d_n-2} \nablaslash, \\
 \Gamma_{u u}^\theta \p_\theta & 
 \sim \delta^{-\frac{1}{4}} |\ub|^{2d_n-  \frac{1}{2}} \nablaslash, &
\end{align*}
and
\begin{align*}
e_A^\omega \Gamma_{\omega u}^\theta \p_\theta & \sim -\frac{1}{r}  e_A + \delta^{\frac{3}{4}} |\ub|^{2d_n-\frac{3}{2}} \nablaslash, \\
e_A^\omega \Gamma_{\omega \ub}^\theta \p_\theta & \sim  \frac{1}{r}  e_A + \delta^{\frac{3}{2}} |\ub|^{2d_n-2} \nablaslash, \\
 e_A^\omega e_B^\theta \Gamma_{\omega \theta}^u & \sim  \frac{1}{2r}  \eta_{AB}  + \delta |\ub|^{2d_n-\frac{3}{2}}, \\
 e_A^\omega e_B^\theta \Gamma_{\omega \theta}^{\ub} & \sim - \frac{1}{2r}  \eta_{AB} + |\ub|^{2d_n-1},\\
  e_A^\omega e_B^\theta \Gamma_{\omega \theta}^{\theta} \p_{\theta}  & \sim  \Gamma_{AB}^{\theta} (\eta) \p_{\theta}  + \delta^{\frac{3}{4}} |\ub|^{2d_n-\frac{3}{2}} \nablaslash.
\end{align*}
\end{proposition}
\begin{remark}\label{rk-connection}
Proposition \ref{lem-connection} also holds true (in the whole spacetime region) with $|\ub|$ being replaced by $r$, and the $\delta$-weight being absent, if we are only concerning with the decay rates in $r$.
\end{remark}
\begin{proof}
By \eqref{2.9}, there is in any coordinates,
\begin{equation}\label{christoffel-g-eta}
\begin{split}
\Gamma_{\mu \nu}^\alpha & = g^{\alpha \beta} \eta_{\beta \gamma} \Gamma_{\mu \nu}^\gamma (\eta) + g^{\alpha \beta} \p_\beta \phi   \p_\mu \p_\nu \phi \\
& = \Gamma_{\mu \nu}^\alpha (\eta) -\frac{1}{g} \p^\alpha \phi \p_\gamma \phi \Gamma_{\mu \nu}^\gamma (\eta) + g^{\alpha \beta}  \p_\beta \phi \p_\mu \p_\nu \phi,
\end{split}
\end{equation}
where $\Gamma_{\mu \nu}^\alpha (\eta)$ denotes the Christoffel symbol of the Minkowski metric $\eta_{\mu \nu}$.
Note that,
\begin{equation}\label{christoffel-eta}
\begin{split}
\Gamma_{u \ub}^\gamma (\eta) &=\Gamma_{u u}^\gamma (\eta) = \Gamma_{\ub \ub}^\gamma (\eta) =0,\\
\Gamma_{\omega u}^{\ub} (\eta) &= \Gamma_{\omega u}^u (\eta) = \Gamma_{\omega \ub}^{\ub} (\eta)= \Gamma_{\omega \ub}^u (\eta) =0,  \\
\Gamma_{\omega u}^\theta (\eta) &= -\frac{1}{r} \eta^\theta_\omega, \quad \Gamma_{\omega \ub}^\theta (\eta) = \frac{1}{r} \eta^\theta_\omega,\\
\Gamma_{\omega \theta}^u (\eta) &= \frac{1}{2r} \eta_{\omega \theta}, \quad \Gamma_{\omega \theta}^{\ub} (\eta) =- \frac{1}{2r} \eta_{\omega \theta}.
\end{split}
\end{equation}
We have $\Gamma_{\ub u}^\alpha = g^{\alpha \beta} \p_u \p_{\ub} \phi \p_\beta \phi$, for $\Gamma_{u \ub}^\gamma (\eta)=0$, then
\begin{align*}
\Gamma_{\ub u}^{u} & = g^{u \beta} \p_u \p_{\ub} \phi \p_\beta \phi = - \frac{1}{2} \p_u \p_{\ub} \phi \p_{\ub} \phi 
 \sim \delta |\ub|^{2 d_n - \frac{3}{2}}, \\
\Gamma_{\ub u}^{\ub} &  = g^{\ub \beta} \p_u \p_{\ub} \phi \p_\beta \phi = - \frac{1}{2} \p_{u} \p_{\ub} \phi \p_{u} \phi 
 \sim |\ub|^{2d_n-1},  \\
\Gamma_{\ub u}^\theta \p_\theta &= g^{\theta \beta} \p_u \p_{\ub} \phi \p_\beta \phi \p_\theta \sim \p_u \p_{\ub} \phi \p^\theta \phi \p_\theta  \sim  \delta^{\frac{3}{4}} |\ub|^{2d_n-\frac{3}{2}} \nablaslash.
\end{align*}
We have $\Gamma_{uu}^\alpha = g^{\alpha \beta} \p_u^2 \phi \p_\beta \phi$, for $\Gamma_{u u}^\gamma (\eta)=0$, and
\begin{align*}
 \Gamma_{u u}^{\ub} &= g^{\ub \beta} \p_u^2 \phi \p_\beta \phi = - \frac{1}{2} \p^2_u \phi  \p_u \phi 
 \sim \delta^{-1} |\ub|^{2d_n}, \\
\Gamma_{u u}^u &= g^{u \beta} \p_u^2 \phi \p_\beta \phi = - \frac{1}{2} \p_u^2 \phi \p_{\ub} \phi 
\sim |\ub|^{2d_n -\frac{1}{2}}, \\
\Gamma_{u u}^\theta \p_\theta &= g^{\theta \beta} \p_u^2 \phi \p_\beta \phi \p_\theta \sim \p_u^2 \phi \p^\theta \phi \p_\theta \sim \delta^{-\frac{1}{4}} |\ub|^{2d_n-  \frac{1}{2}} \nablaslash.
\end{align*}
We have $\Gamma_{\ub \ub}^\alpha = g^{\alpha \beta} \p_{\ub}^2 \phi \p_\beta \phi$, for $\Gamma_{\ub \ub}^\gamma (\eta)=0$,
\begin{align*}
 \Gamma_{\ub \ub}^{u} &=g^{u \beta} \p_{\ub}^2 \phi \p_\beta \phi \sim  - \frac{1}{2} \p^2_{\ub} \phi \p_{\ub} \phi \sim \delta^{2} |\ub|^{2d_n -2}, \\
\Gamma_{\ub \ub}^{\ub} &= g^{\ub \beta} \p_{\ub}^2 \phi \p_\beta \phi \sim - \frac{1}{2} \p_{\ub}^2 \phi \p_{u} \phi \sim \delta |\ub|^{2d_n-  \frac{3}{2}}, \\
\Gamma_{\ub \ub}^\theta \p_\theta &= g^{\theta \beta} \p_{\ub}^2 \phi \p_\beta \phi \p_\theta \sim \p_{\ub}^2 \phi \p^{\theta} \phi \p_\theta \sim \delta^{\frac{7}{4}} |\ub|^{2d_n-2} \nablaslash.
\end{align*}
And  $\Gamma_{\omega u}^\alpha = - \frac{1}{r} g^{\alpha \theta} \eta_{\theta \omega} + g^{\alpha \beta} \p_\omega  \p_u  \phi \p_\beta \phi$, due to $\Gamma_{\omega u}^{\ub} (\eta) = \Gamma_{\omega u}^u (\eta) =0$, $\Gamma_{\omega u}^\theta (\eta) = -\frac{1}{r} \eta^\theta_\omega$,
\begin{align*}
  e_A^\omega \Gamma_{\omega u}^{u} 
&  \sim   \frac{1}{r g} \p^u \phi e_A \phi  - \frac{1}{2} \p_{\ub} \phi e_A\p_{u} \phi \sim  \delta |\ub|^{2d_n-\frac{3}{2}}, \\
  e_A^\omega \Gamma_{\omega u}^{\ub} 
 & \sim   \frac{1}{r g} \p^{\ub} \phi e_A \phi  - \frac{1}{2} \p_{u} \phi e_A \p_{u} \phi \sim |\ub|^{2d_n-1}, \\
  e_A^\omega \Gamma_{\omega u}^\theta \p_\theta  
 & \sim  -\frac{1}{r}  e_A^\omega \p_\omega + \frac{1}{r g}  e_A\phi \p^{\theta} \phi \p_\theta +  e_A \p_u \phi \p^{\theta} \phi \p_\theta \\
 & \sim  -\frac{1}{r}  e_A + \delta^{\frac{3}{4}} |\ub|^{2d_n-\frac{3}{2}} \nablaslash.
\end{align*}
And $\Gamma_{\omega \ub}^\alpha =  \frac{1}{r} g^{\alpha \theta} \eta_{\theta \omega} + g^{\alpha \beta}  \p_\omega \p_{\ub} \phi \p_\beta \phi$, due to $\Gamma_{\omega \ub}^{\ub} (\eta) = \Gamma_{\omega \ub}^u (\eta) =0$, $\Gamma_{\omega \ub}^\theta (\eta) = \frac{1}{r} \eta^\theta_\omega$,
\begin{align*}
  e_A^\omega \Gamma_{\omega \ub}^{u} 
&  \sim  - \frac{1}{r g} \p^u \phi e_A \phi - \frac{1}{2} \p_{\ub} \phi e_A \p_{\ub} \phi \sim  \delta^{\frac{7}{4}} |\ub|^{2d_n-2}, \\
  e_A^\omega \Gamma_{\omega \ub}^{\ub} 
 & \sim  - \frac{1}{r g} \p^{\ub} \phi e_A \phi - \frac{1}{2} \p_{u} \phi e_A \p_{\ub} \phi  \sim \delta^{\frac{3}{4}}  |\ub|^{2d_n - \frac{3}{2}}, \\
  e_A^\omega \Gamma_{\omega \ub}^\theta \p_\theta 
 & \sim  \frac{1}{r}  e_A^\omega \p_\omega - \frac{1}{rg}  e_A \phi \p^{\theta} \phi \p_\theta +  e_A \p_{\ub}  \phi \p^{\theta} \phi \p_\theta \\
 & \sim \frac{1}{r}  e_A + \delta^{\frac{3}{2}} |\ub|^{2d_n-2} \nablaslash.
\end{align*}
Finally, using \eqref{christoffel-g-eta} and \eqref{christoffel-eta},
\begin{align*}
\Gamma_{\omega \theta}^\alpha 
&= g^{\alpha \beta} \eta_{\beta \gamma} \Gamma_{\omega \theta}^\gamma (\eta) + g^{\alpha \beta} \p_\beta \phi   \p_\omega \p_\theta \phi \\
&=  \frac{1}{r} g^{\alpha u} \eta_{\omega \theta} - \frac{1}{r} g^{\alpha \ub} \eta_{\omega \theta} + g^{\alpha \varphi} \Gamma_{\omega \theta}^\gamma (\eta) \eta_{\gamma \varphi} + g^{\alpha \beta} \p_\theta \p_\omega \phi \p_\beta \phi.
\end{align*}
Therefore,
\begin{align*}
e_A^\omega e_B^\theta \Gamma_{\omega \theta}^u &=  \frac{1}{r} g^{u u} \eta_{AB} - \frac{1}{r} g^{u \ub} \eta_{AB} + g^{u \beta} e_A^\omega e_B^\theta  \p_\theta \p_\omega \phi \p_\beta \phi + g^{u \omega }  \Gamma_{AB}^\gamma (\eta) \eta_{\gamma \omega} \\
&\sim \left( \frac{1}{2r}  + \frac{\p_u \phi \p_{\ub} \phi}{4rg} - \frac{(\p_{\ub} \phi)^2}{4rg} \right) \eta_{AB} -  \frac{1}{2} \nablaslash_A \nablaslash_B \phi  \p_{\ub} \phi \\
 & \sim  \frac{1}{2r}  \eta_{AB}  + \delta |\ub|^{2d_n-\frac{3}{2}},\\
e_A^\omega e_B^\theta \Gamma_{\omega \theta}^{\ub} &=  \frac{1}{r} g^{\ub u} \eta_{AB} - \frac{1}{r} g^{\ub \ub} \eta_{AB} + g^{\ub \beta} e_A^\omega e_B^\theta  \p_\theta \p_\omega \phi \p_\beta \phi + g^{\ub \omega}  \Gamma_{AB}^\gamma (\eta) \eta_{\gamma \omega} \\
&\sim \left(- \frac{1}{2r}  + \frac{(\p_u \phi)^2}{4rg} - \frac{\p_u \phi \p_{\ub} \phi}{4rg} \right) \eta_{AB} - \frac{1}{2} \nablaslash_A \nablaslash_B \phi  \p_{u} \phi \\
& 
\sim  - \frac{1}{2r}  \eta_{AB} + |\ub|^{2d_n-1},\\
e_A^{\varphi} e_B^\theta \Gamma_{\varphi \theta}^{\omega} \p_{\omega} &=  \frac{1}{r} g^{\omega u} \eta_{AB} \p_{\omega} - \frac{1}{r} g^{\omega \ub} \eta_{AB} \p_{\omega} + g^{\omega \beta} e_A^{\theta} e_B^\varphi \p_\varphi \p_{\theta} \phi \p_\beta \phi \p_{\omega} + g^{\omega \theta}  \Gamma_{AB}^\gamma (\eta) \eta_{\gamma \theta}\p_{\omega}  \\
&\sim \Gamma_{AB}^{\omega} (\eta) \p_{\omega} - \left( \frac{\p_u\phi }{2rg} - \frac{\p_{\ub} \phi }{2rg}  \right)  \eta_{AB} \p^\omega \phi \p_\omega + \nablaslash_A \nablaslash_B \phi  \p^\omega \phi \p_\omega \\
 &\sim \Gamma_{AB}^{\omega} (\eta) \p_{\omega}  + \delta^{\frac{3}{4}} |\ub|^{2d_n-\frac{3}{2}} \nablaslash.
\end{align*}
\end{proof}

As an application of Proposition \ref{lem-connection}, there is the following lemma.

\begin{lemma}\label{lemma-decom-curved-wave}
Let $\phi$ be the solution to the RME \eqref{1.1}. In the short pulse region II, we have, for any function $\varphi$,
\begin{equation}\label{box-estimate-general}
\begin{split}
\Box_{g(\p\phi)} \varphi& =  -\p_{\ub} \p_u \varphi +\nablaslash^\omega \nablaslash_\omega \varphi  + \frac{n-1}{2r} \p_{\ub} \varphi - \frac{n-1}{2r} \p_{u} \varphi \\
& \quad \pm |\ub|^{2d_n} \p^2_{\ub} \varphi \pm \delta^2 |\ub|^{2d_n-1} \p_u^2 \varphi  \pm \delta^{\frac{7}{4}} |\ub|^{2d_n - 1}  \nablaslash  \p_u \varphi  \\
& \quad \pm \delta^{\frac{3}{4}} |\ub|^{2d_n - \frac{1}{2}}  \nablaslash \p_{\ub} \varphi  \pm \delta^{\frac{3}{2}} |\ub|^{2d_n -1} \nablaslash^2 \varphi  \\
& \quad  \pm |\ub|^{2d_n - 1} \p_{\ub}  \varphi \pm \delta  |\ub|^{2d_n - \frac{3}{2}} \p_u \varphi \pm \delta^{\frac{3}{4}} |\ub|^{2d_n - \frac{3}{2}}  \nablaslash \varphi.
\end{split}
\end{equation}
\end{lemma}
\begin{proof}
We expand the Christoffel symbol
\begin{align*}
g^{\alpha \beta}D_\alpha D_\beta \varphi  & =  2g^{u\ub} \left(\p_{\ub} \p_u \varphi - \Gamma_{\ub u}^\sigma \p_\sigma \varphi \right) +g^{\ub \ub} \left(\p^2_{\ub} \varphi - \Gamma_{\ub \ub}^\sigma \p_\sigma \varphi \right)\\
& \quad + g^{uu} \left(\p^2_{u} \varphi - \Gamma_{u u}^\sigma \p_\sigma \varphi \right) + 2 g^{\ub \omega} \left(\p_{\ub} \p_\omega \varphi - \Gamma_{\ub \omega}^\sigma \p_\sigma \varphi \right)  \\
& \quad + 2 g^{u \omega} \left(\p_{u} \p_\omega \varphi - \Gamma_{u \omega}^\sigma \p_\sigma \varphi \right)  + g^{\theta \omega} \left(\p_{\theta} \p_\omega \varphi - \Gamma_{\theta \omega}^\sigma \p_\sigma \varphi \right).
\end{align*}
Thus, substituted the result of Proposition \ref{lem-connection}, it becomes
\begin{align*}
\Box_{g(\p \phi)} \varphi  & =  - \p_{\ub}  \p_u\varphi  \pm \delta |\ub|^{2d_n - \frac{1}{2}} \p_u \p_{\ub} \varphi \pm |\ub|^{2d_n} \p^2_{\ub} \varphi \pm \delta^2 |\ub|^{2d_n-1} \p_u^2 \varphi \\
& \quad \pm \delta^{\frac{7}{4}} |\ub|^{2d_n - 1}  \nablaslash  \p_u\varphi \pm \delta^{\frac{3}{4}} |\ub|^{2d_n - \frac{1}{2}}  \nablaslash \p_{\ub} \varphi + g^{\theta \omega} \left(\p_{\theta} \p_\omega \varphi - \Gamma_{\theta \omega}^\sigma \p_\sigma \varphi \right)\\
&\quad \pm |\ub|^{2d_n - 1} \p_{\ub}  \varphi \pm \delta  |\ub|^{2d_n - \frac{3}{2}} \p_u \varphi \pm \delta^{\frac{3}{4}} |\ub|^{2d_n - \frac{3}{2}}  \nablaslash \varphi.
\end{align*}
For the remaining part $ g^{\theta \omega} \left(\p_{\theta} \p_\omega \varphi - \Gamma_{\theta \omega}^\sigma \p_\sigma \varphi \right)$,
\begin{align*}
 g^{\theta \omega} \left(\p_{\theta} \p_\omega \varphi - \Gamma_{\theta \omega}^\sigma \p_\sigma \varphi \right)&= \nablaslash^\omega \nablaslash_\omega \varphi + \frac{n-1}{2r} \p_{\ub}\varphi - \frac{n-1}{2r} \p_{u}\varphi  \pm \delta^{\frac{3}{2}} |\ub|^{2d_n -1} \nablaslash^2 \varphi  \\
&\quad  \pm |\ub|^{2d_n-1} \p_{\ub}\varphi \pm \delta |\ub|^{2d_n-\frac{3}{2}} \p_{u}\varphi \pm \delta^{\frac{3}{4}} |\ub|^{2d_n-\frac{3}{2}} \nablaslash\varphi.
\end{align*}
Combining the above calculations, we prove \eqref{box-estimate-general}.
\end{proof}

In the following, we provide the proof of Corollary \ref{lem-D-4-3-omega-null-infty}.
\begin{proof}[Proof of Corollary \ref{lem-D-4-3-omega-null-infty}]
We begin with $D_{4} e_3 = e_4^\mu \p_\mu e_3^{\nu} \p_\nu + e_4^\mu  e_3^{ \nu} \Gamma_{\mu \nu}^\sigma \p_\sigma$.  Note that,
\begin{align*}
e_4^\mu  e_3^{ \nu} \Gamma_{\mu \nu}^\sigma \p_\sigma & = e_4^{\mu} e_3^{ \nu} \Gamma_{\mu \nu}^\sigma (\eta) \p_\sigma  - g^{-1} e_4^{\mu} e_3^{ \nu} \Gamma_{\mu \nu}^\sigma (\eta) \p_\sigma \phi  \p^\alpha \phi  \p_\alpha \\
&\quad +e_4^{\mu} e_3^{ \nu} \p_\mu \p_\nu \phi g^{\alpha \beta}\p_\beta \phi \p_\alpha,
\end{align*}
and by \eqref{christoffel-eta},
\begin{align*}
e_4^\mu  e_3^{ \nu} \Gamma_{\mu \nu}^\sigma (\eta) \p_\sigma \phi & = \left( e_4^\omega e_3^{\ub} \Gamma_{\omega \ub}^\theta (\eta) + e_3^\omega e_4^{u} \Gamma_{\omega u}^\theta (\eta) + e_4^\omega e_3^{u} \Gamma_{\omega u}^\theta (\eta)+ e_3^\omega e_4^{\ub} \Gamma_{\omega \ub}^\theta (\eta)  \right) \p_\theta \phi \\
&\quad + e_3^\omega e_4^\varphi \Gamma_{\omega \varphi}^\theta (\eta) \p_\theta \phi +  e_4^\omega e_3^{\theta}\Gamma_{\omega \theta}^u (\eta) \p_u \phi +  e_4^\omega e_3^{\theta}\Gamma_{\omega \theta}^{\ub} (\eta) \p_{\ub} \phi  \\
&= \left( \frac{ \p_u \phi \p^\theta \phi}{r g}  - \frac{ \p_{\ub} \phi \p^\theta \phi}{rg} + \p_u \phi \p_{\ub} \phi \p^\omega \phi \p^\varphi \phi \Gamma_{\omega \varphi}^\theta (\eta) \right)  \p_\theta \phi  \\
&\quad + \frac{\p_u \phi \p_{\ub} \phi}{2r g^2}  |\nablaslash \phi|^2 \p_u \phi - \frac{\p_u \phi \p_{\ub} \phi }{2r g^2} |\nablaslash \phi|^2 \p_{\ub} \phi + \text{l.o.t.} \\
& = O(r^{-5}).
\end{align*}
In addition, using the fact that $g(\p_\sigma, e_\nu) = \eta(\p_\sigma, e_\nu) + \p_\sigma \phi e_\nu \phi$ and \eqref{eta-coordinate-filed-null-frame}, we derive
\begin{align*}
 e_4^{\mu} e_3^{ \nu} \Gamma_{\mu \nu}^\sigma \p_\sigma \phi & = e_4^{\mu} e_3^{ \nu}  \p_\mu \p_\nu \phi \eta^{\alpha \beta} \p_\beta \phi \p_\alpha \phi + O(r^{-5}), \\ 
e_4^\mu  e_3^{ \nu} \Gamma_{\mu \nu}^\sigma g(\p_\sigma, e_4) &= -2 e_4^\mu  e_3^{ \nu} \Gamma_{\mu \nu}^{\ub} (\eta) + e_4^{\mu} e_3^{ \nu}  \p_\mu \p_\nu \phi e_4 \phi +O(r^{-6}), \\
e_4^\mu  e_3^{ \nu} \Gamma_{\mu \nu}^\sigma g(\p_\sigma, e_A) &=e_4^\mu  e_3^{ \nu} \Gamma_{\mu \nu}^{\theta} (\eta) \eta_{\theta A} + e_4^{\mu} e_3^{ \nu}  \p_\mu \p_\nu \phi e_A \phi +  O(r^{-6-\frac{1}{2}}), \\
e_4^\mu  e_3^{ \nu} \Gamma_{\mu \nu}^\sigma g(\p_\sigma, e_3) &= -2 e_4^\mu  e_3^{ \nu} \Gamma_{\mu \nu}^{u} (\eta) + e_4^{\mu} e_3^{ \nu}  \p_\mu \p_\nu \phi e_3 \phi +  O(r^{-6-\frac{1}{2}}).
\end{align*}
That is,
\begin{subequations}
\begin{align}
 e_4^{\mu} e_3^{ \nu} \Gamma_{\mu \nu}^\sigma \p_\sigma \phi & = O(r^{-4-\frac{1}{2}}), \label{Gamma-3-4-p-phi} \\
e_4^\mu  e_3^{ \nu} \Gamma_{\mu \nu}^\sigma g(\p_\sigma, e_4) &=e_4^{\mu} e_3^{ \nu}  \p_\mu \p_\nu \phi e_4 \phi + O(r^{-6}) = O(r^{-3}), \label{gamma-3-4-e4} \\
e_4^\mu  e_3^{ \nu} \Gamma_{\mu \nu}^\sigma g(\p_\sigma, e_A) &=e_4^{\mu} e_3^{ \nu}  \p_\mu \p_\nu \phi e_A \phi + \frac{\p_u \phi }{r} e_A\phi  +O(r^{-4}) = O(r^{-3- \frac{1}{2}}), \label{gamma-3-4-eA} \\
e_4^\mu  e_3^{ \nu} \Gamma_{\mu \nu}^\sigma g(\p_\sigma, e_3)
 &=e_4^{\mu} e_3^{ \nu}  \p_\mu \p_\nu \phi e_3 \phi  +O(r^{-6 - \frac{1}{2}}) = O(r^{-3 - \frac{1}{2}}).\label{gamma-3-4-e3}
\end{align}
\end{subequations}

Viewing the definition of $e_3$, and \eqref{gamma-3-4-e4}, we deduce
\begin{align*}
g(D_{4} e_3, e_4) &= e_4^{\mu} \p_{\mu}  e_3^{ \nu} g(\p_{\nu}, e_4) + e_4^\mu  e_3^{ \nu} \Gamma_{\mu \nu}^\sigma g(\p_\sigma, e_4) \\
&= - e_4 \frac{3 (\p_{\ub} \phi \p_u \phi)^2 }{8g} g(\p_{\ub}, e_4) + e_4  \frac{(\p_{\ub} \phi)^2}{4g} g(\p_{u}, e_4) - e_4 \frac{\p_{\ub} \phi \p^\theta \phi}{g} g(\p_\theta, e_4) \\
&\quad + e_4^{\mu} e_3^{ \nu}  \p_\mu \p_\nu \phi e_4 \phi + O(r^{-6})+  \text{l.o.t.},
\end{align*}
and hence
\begin{equation}\label{eq-D4-3-4}
g(D_4  e_3, e_4) = e_4^{\mu} e_3^{ \nu}  \p_\nu \p_\mu \phi e_4 \phi+O(r^{-5}) = O(r^{-3}).
\end{equation}
Using \eqref{gamma-3-4-eA}, we have
\begin{align*}
g(D_{4} e_3, e_A) &= e_4^{\mu} \p_{\mu}  e_3^{ \nu} g(\p_{\nu}, e_A) + e_4^\mu  e_3^{ \nu} \Gamma_{\mu \nu}^\sigma g(\p_\sigma, e_A) \\
&= - e_4 \frac{3 (\p_{\ub} \phi \p_u \phi)^2 }{8g} g_{{\ub}A} + e_4  \frac{(\p_{\ub} \phi)^2}{4g} g_{u A} - e_4 \frac{\p_{\ub} \phi \p^\theta \phi}{g} g_{\theta A} \\
& \quad + \frac{1}{r} \p_u \phi e_A \phi  + e_4^{\mu} e_3^{ \nu}  \p_\mu \p_\nu \phi e_A \phi +  O(r^{-4}) +  \text{l.o.t.}\\
&  = - e_4 \frac{\p_{\ub} \phi \p^\theta \phi}{g} g_{\theta A}+ e_4^{\mu} e_3^{ \nu}  \p_\mu \p_\nu \phi e_A \phi +  \frac{\p_u \phi}{r}  e_A\phi  +O(r^{-4}) \\
&=-\p_{\ub} \phi e_4 \p^\theta \phi g_{\theta A} +  \frac{\p_u \phi}{r}  e_A\phi  +O(r^{-4}).
\end{align*}
Therefore,
\begin{equation}\label{eq-D4-3-A}
g(D_4  e_3, e_A) =- e_A^\theta e_4^\mu \p_\mu \p_\theta \phi \p_{\ub} \phi  +  \frac{\p_u \phi }{r}  e_A\phi  +O(r^{-4})= O(r^{-3 - \frac{1}{2}}).
\end{equation}
Combining \eqref{eq-D4-3-4} with \eqref{eq-D4-3-A}, we prove \eqref {D-e4-e3}.

For $D_{3} e_4 = e_3^\mu \p_\mu e_4^{\nu} \p_\nu + e_4^\mu  e_3^{ \nu} \Gamma_{\mu \nu}^\sigma \p_\sigma$, by the definition of  $e_4$, and \eqref{gamma-3-4-e3}
\begin{align*}
g(D_{3} e_4, e_3) &= e_3^{\mu} \p_{\mu}  e_4^{ \nu} g(\p_{\nu}, e_3) + e_4^\mu  e_3^{ \nu} \Gamma_{\mu \nu}^\sigma g(\p_\sigma, e_3) \\
&= e_3  \frac{(\p_u \phi)^2}{4g} g(\p_{\ub}, e_3) + e_3  \frac{\p_{\ub} \phi \p_u \phi}{2g} g(\p_{u}, e_3) - e_3 \frac{\p_u \phi \p^\theta \phi}{g} g(\p_\theta, e_3) \\
&\quad + e_4^{\mu} e_3^{ \nu}  \p_\mu \p_\nu \phi e_3 \phi + O(r^{-6 - \frac{1}{2}})+  \text{l.o.t.} \\
&  = e_3  \frac{\p_{\ub} \phi \p_u \phi}{2g} g(\p_{u}, e_3)+ e_4^{\mu} e_3^{ \nu}  \p_\mu \p_\nu \phi e_3 \phi + O(r^{-6}),
\end{align*}
then there is
\begin{equation}\label{eq-D3-4-3}
g(D_3  e_4, e_3) =  - e_3 \p_{\ub} \phi \p_u \phi + O(r^{-6}) = O(r^{-3 - \frac{1}{2}}).
\end{equation}
Again, in view of \eqref{gamma-3-4-eA},
\begin{align*}
g(D_{3} e_4, e_A) &= e_3^{\mu} \p_{\mu}  e_4^{ \nu} g(\p_{\nu}, e_A) + e_4^\mu  e_3^{ \nu} \Gamma_{\mu \nu}^\sigma g(\p_\sigma, e_A) \\
&= e_3  \frac{(\p_u \phi)^2}{4g} g(\p_{\ub}, e_A) + e_3  \frac{\p_{\ub} \phi \p_u \phi}{2g} g(\p_{u}, e_A) - e_3 \frac{\p_u \phi \p^\theta \phi}{g} g(\p_\theta, e_A) \\
& \quad + \frac{1}{r} \p_u \phi e_A \phi   + e_4^{\mu} e_3^{ \nu}  \p_\mu \p_\nu \phi e_A \phi +  O(r^{-4}) +  \text{l.o.t.} \\
&  =- e_3 \frac{\p_u \phi \p^\theta \phi}{g} g_{\theta A}+ e_4^{\mu} e_3^{ \nu}  \p_\mu \p_\nu \phi e_A \phi +   \frac{\p_u \phi}{r} e_A\phi +O(r^{-4})\\
&= - \p_u \phi e_3\p^\theta \phi g_{\theta A} +  \frac{\p_u \phi}{r} e_A\phi +O(r^{-4}). 
\end{align*}
Namely, we get
\begin{equation}\label{eq-D3-4-A}
g(D_3  e_4, e_A) = - e_A^\theta e_3^\nu \p_\nu \p_\theta \phi  \p_u \phi +  \frac{3\p_u \phi}{r} e_A\phi +O(r^{-4}) = O(r^{-3 - \frac{1}{2}}).
\end{equation}
As a result, \eqref{D-e3-e4} follows by considering \eqref{eq-D3-4-3} and \eqref{eq-D3-4-A}.

We next turn to $D_{A} e_3 = e_A^\omega \p_\omega e_3^\nu \p_\nu +  e_A^\omega e_3^\nu \Gamma_{\omega \nu}^\sigma \p_\sigma$. Similarly, viewing \eqref{christoffel-g-eta} and \eqref{christoffel-eta}, we have
\begin{align*}
e_3^{\nu} e_A^{ \omega} \Gamma_{\nu \omega}^\sigma \p_\sigma & = e_3^{\nu} e_A^{ \omega} \Gamma_{\nu \omega}^\sigma (\eta) \p_\sigma - g^{-1}  e_3^{\nu} e_A^{ \omega} \Gamma_{\nu \omega}^\sigma (\eta) \p_\sigma \phi \p^\alpha \phi \p_\alpha\\
& \quad + e_3^{\nu} e_A^{ \omega} \p_\nu \p_\omega \phi g^{\alpha \beta} \p_\beta \phi \p_\alpha,
\end{align*}
and
\begin{align*}
e_3^{\nu} e_A^{ \omega} \Gamma_{\nu \omega}^\sigma (\eta) \p_\sigma \phi & = e_A^\omega \left(  e_3^{\ub} \Gamma_{\omega \ub}^\theta (\eta)  + e_3^{u} \Gamma_{\omega u}^\theta (\eta) \right) \p_\theta \phi +  e_A^\omega e_3^{\theta} \left( \Gamma_{\omega \theta}^u (\eta) \p_u \phi +  \Gamma_{\omega \theta}^{\ub} (\eta) \p_{\ub} \phi \right) \\
&= \left( \frac{1}{r} - \frac{(\p_{\ub} \phi)^2}{4rg}  \right)  e_A \phi - \frac{\p_{\ub} \phi e_A \phi }{2rg} \p_u \phi + \frac{ \p_{\ub} \phi e_A \phi}{2r} \p_{\ub} \phi\\
 & 
 =  \frac{ e_A \phi}{r} + O(r^{-5}) =  O(r^{-2-\frac{1}{2}}).
\end{align*}
Besides, by the fact that $g(\p_\sigma, e_\nu) = \eta(\p_\sigma, e_\nu) + \p_\sigma \phi e_\nu \phi$ and using \eqref{eta-coordinate-filed-null-frame}, we obtain
\begin{align*}
e_3^{\nu} e_A^{ \omega} \Gamma_{\nu \omega}^\sigma g(\p_\sigma, e_4) &= - \frac{ e_A \phi \p_u \phi}{r} + \frac{ \p_{\ub} \phi e_A \phi}{2r} (-2+ O(r^{-2-\frac{1}{2}})) +  O(r^{-6}) \\
&\quad + e_3^{\nu} e_A^{ \omega} \Gamma_{\nu \omega}^\sigma (\eta) \p_\sigma \phi e_4 \phi - g^{-1}  e_3^{\nu} e_A^{ \omega} \Gamma_{\nu \omega}^\sigma (\eta) \p_\sigma \phi e_4 \phi \\
& \quad - g^{-1}  e_3^{\nu} e_A^{ \omega} \Gamma_{\nu \omega}^\sigma (\eta) \p_\sigma \phi \p^\alpha \phi \p_\alpha \phi e_4 \phi +e_3^{\nu} e_A^{ \omega}  \p_\nu \p_\omega \phi e_4 \phi, 
\end{align*}
i.e.,
\begin{equation}\label{gamma-A-3-e4}
e_3^{\nu} e_A^{ \omega} \Gamma_{\nu \omega}^\sigma g(\p_\sigma, e_4) = - \frac{ e_A \phi \p_u \phi}{r}+e_3^{\nu} e_A^{ \omega}  \p_\nu \p_\omega \phi e_4 \phi +  O(r^{-4}).
\end{equation}

Substituting the formula of $e_3$ and making use of \eqref{gamma-A-3-e4}, we have
\begin{align*}
g(D_{A} e_3, e_4) &= e_A^\omega \p_\omega e_3^\nu g(\p_\nu, e_4) +  e_A^\omega e_3^\nu \Gamma_{\omega \nu}^\sigma g(\p_\sigma, e_4) \\
&= - e_A \frac{3 (\p_{\ub} \phi \p_u \phi)^2 }{8g} g_{{\ub} 4} + e_A  \frac{(\p_{\ub} \phi)^2}{4g} g_{u 4} - e_A \frac{\p_{\ub} \phi \p^\theta \phi}{g} g_{\theta 4} \\
& \quad - \frac{ e_A \phi \p_u \phi}{r}+e_3^{\nu} e_A^{ \omega}  \p_\nu \p_\omega \phi e_4 \phi +  O(r^{-4}).
\end{align*}
Consequently,
\begin{equation}\label{eq-DA-3-4}
g(D_A  e_3, e_4) = - \frac{ e_A \phi \p_u \phi}{r}+e_3^{\nu} e_A^{ \omega}  \p_\nu \p_\omega \phi e_4 \phi +  O(r^{-4})=O(r^{-3-\frac{1}{2}}),
\end{equation}
which further entails
\begin{equation}\label{eq-DA-4-3}
g(D_A  e_4, e_3) = -g(e_4, D_A e_3) =O(r^{-3-\frac{1}{2}}).
\end{equation}

Putting \eqref{eq-DA-4-B} and \eqref{eq-DA-4-3} together, we complete the proof for \eqref{D-A-4}.
And \eqref{D-A-3} follows by combining \eqref{eq-DA-3-B} with \eqref{eq-DA-3-4}.

Finally, the proof for \eqref{D-4-4-null-frame} is presented in the one following Corollary \ref{lem-D-4-3-omega-null-infty}.
\end{proof}

\subsection{Raychaudhuri equation}\label{sec-Raychaudhuri equation}
We shall define the projection onto the sphere $S_{u, \ub}$:
\begin{equation}\label{def-Pi-proje}
\Pi^{\alpha \beta} = g^{\alpha \beta} + \frac{1}{2}e_3^\alpha e_4^\beta + \frac{1}{2}e_4^\alpha e_3^\beta,
\end{equation}
and
\begin{equation*}
\Pi^{\alpha}_{\beta} = \Pi^{\alpha \mu} g_{\mu \beta} , \quad \Pi_{\alpha \beta} = \Pi^{\mu \nu} g_{\alpha \mu} g_{\beta \nu}.
\end{equation*}
We know that $\Pi_{\alpha \beta} $ lives on $S_{u, \ub}$, namely,  $\Pi_{\alpha \beta} e_3^\alpha =0$, and  $\Pi_{\alpha \beta} e_4^\alpha =0$.
And restricted on $S_{u, \ub}$, $\Pi_{AB} = \gslash_{AB}$ is the induced metric on $S_{u, \ub}$, and $\Pi^{AB} = \gslash^{AB}$. Hence, the $\tr \chi$ and $\tr \underline{\chi}$ can be equivalently rewritten as
\begin{align*}
\text{tr} \underline{\chi} = \Pi^{\alpha \beta} D_{\alpha} e_{4\beta}, \quad \text{tr} \chi = \Pi^{\alpha \beta} D_{\alpha} e_{3\beta}.
\end{align*}
\begin{lemma}\label{lem-Raychaudhuri}
We have the Raychaudhuri equation \cite{Hawking-Ellis} with non-affine parametrisations (along $e_4$),
\begin{equation}\label{Ray-4}
\begin{split}
 D_{4} \tr \chib &= - \frac{1}{2}\tr^{2} \chib - \gslash^{AB} \gslash^{CD} \hat \chib_{C B} \hat \chib_{D A}   - R_{4A 4 B} \gslash^{AB} \\
& \quad +\gslash^{AB}  \left( D_A D_{4} e_{4B} +  g \left( D_A e_4, e_3 \right) \cdot D_{4} e_{4B}  \right) \\
&\quad  + \frac{1}{2 } \gslash^{AB}  \left(  D_{3} e_{4A} + D_{4} e_{3A}  \right) D_{4}  e_{4B}.
\end{split}
\end{equation}
\end{lemma}

\begin{proof}
By definition of the Ricci tensor,
\begin{align*}
e_4^\mu D_\mu D_\alpha e_{4 \beta} &= e_4^\mu D_\alpha D_\mu e_{4 \beta} +  e_4^\mu  R_{\mu \alpha \beta}{}^{\! \nu} e_{4\nu}  \\
&=D_\alpha \left( e_4^\mu D_{\mu} e_{4\beta} \right) - D_\alpha e_4^\mu \cdot D_\mu e_{4\beta} + R_{4 \alpha \beta 4}.
\end{align*}
Then multiplying $\Pi^{\alpha \beta}$, we have
\begin{equation}\label{Ray-2}
\begin{split}
 &\quad D_{4} \left( \Pi^{\alpha \beta}  D_\alpha e_{4 \beta} \right) -  D_{4} \Pi^{\alpha \beta} \cdot D_\alpha e_{4 \beta} \\
&= \Pi^{\alpha \beta}  \left( D_\alpha \left(  D_{4} e_{4}\right)_{\beta}  - g \left( D_\alpha e_4, e_3 \right) \cdot D_{4} e_{4\beta}  \cdot g^{34}  \right)   \\
&\quad - \Pi^{\alpha \beta}  \Pi^{\mu \nu}  D_\alpha e_{4\nu}  D_\mu e_{4\beta} - R_{4 \alpha 4 \beta} \Pi^{\alpha \beta}.
\end{split}
\end{equation}
We note that, projected onto $S_{u, \ub}$, $\Pi^{\alpha}_{A}  \Pi_{B}^{\nu}  D_\alpha e_{4\nu} =  \chib_{AB} + \sigma_{AB} = \hat\chib_{AB} +  \frac{1}{2} \tr \chib \gslash_{AB} + \sigma_{AB}$, where $\chib_{AB}$ is the symmetric part, and the anti-symmetric part $\sigma_{AB}$ is the rotation given by $\sigma_{AB} =\frac{1}{2} \left( D_{A} e_{4B} - D_{B} e_{4A}\right)$. Since $\{e_A, \, A=1,2 \}$ is a basis of $TS_{u,\ub}$,  there is  $\sigma_{AB} = 0$, by the Frobenius theorem.
Moreover, by virtue of the definition \eqref{def-Pi-proje}, \eqref{Ray-2} implies that
\begin{equation}\label{Ray-3}
\begin{split}
 D_{4} \tr \chib &= \Pi^{\alpha}_{\beta}  D_\alpha \left(  D_{4} e_{4}  \right)^{\beta}+ \frac{1}{2} g \left( D_\alpha e_4, e_3 \right) \cdot D_{4} e_{4\beta} \Pi^{\alpha \beta}  \\
&\quad + \frac{1}{2 } \left(  D_{3}e_{4\mu} + D_{4}e_{3\mu}  \right) D_{4} e_4^\mu +  \frac{1}{2 } g\left(  D_{\mu}e_{4}, e_3  \right) D_{4} e_4^\mu \\
&\quad - \frac{1}{2} \tr^2 \chib -  \gslash^{AB} \gslash^{CD} \hat \chib_{C B} \hat \chib_{D A} - R_{4 \alpha 4 \beta} \Pi^{\alpha \beta}.
\end{split}
\end{equation}
The second and the fourth terms on the right hand side of the equality \eqref{Ray-3} turn into
\begin{align*}
 & \quad \frac{1}{2} g \left( D_\alpha e_4, e_3 \right) \cdot D_{4} e_{4\beta} \Pi^{\alpha \beta} +  \frac{1}{2 } g\left(  D_{\mu}e_{4}, e_3  \right) D_{4} e_4^\mu \\
 & = \frac{1}{2 } g^{\nu \alpha}g \left(  D_{\alpha}e_{4}, e_3  \right) D_{4} e_4^\mu \left( g_{\mu \nu + \Pi_{\mu \nu} } \right) \\
 & = D_4e_4^\mu \cdot g \left(  D_{\mu}e_{4}, e_3  \right) + \frac{1}{4}  \left( g \left(  D_{4}e_{4}, e_3  \right)\right)^2.
\end{align*}
Now combine these with the third term on the right hand side of \eqref{Ray-3},
\begin{align*}
 &\quad D_4e_4^\mu \cdot g \left(  D_{\mu}e_{4}, e_3  \right) + \frac{1}{4}  \left( g \left(  D_{4}e_{4}, e_3  \right) \right)^2 + \frac{1}{2 } \left(  D_{3}e_{4\mu} + D_{4}e_{3\mu}  \right) D_{4} e_4^\mu \\
 & =\Pi^{\alpha \beta} D_{4} e_{4 \beta} \cdot g \left( D_\alpha e_4, e_3 \right) + \frac{1}{2 } \Pi^{\alpha \beta}  \left(  D_{3}e_{4 \alpha} + D_{4}e_{3 \alpha}  \right) D_{4}  e_{4 \beta},
\end{align*}
where we have used the fact that the $e_3, e_4$ components vanish, i.e.
\begin{align*}
 &\quad g^{34} \left( g \left(  D_{4}e_{4}, e_3  \right) \right)^2 + \frac{1}{4}  \left( g \left(  D_{4}e_{4}, e_3  \right) \right)^2 + \frac{1}{2 }g^{34} g \left( D_{e_4}e_{3}, e_4  \right) g \left(  D_{4}e_{4}, e_3  \right)  \\
 & =- \frac{1}{4}  \left( g \left(  D_{4}e_{4}, e_3  \right) \right)^2 - \frac{1}{2 }g^{34} \left( g \left(  D_{4}e_{4}, e_3  \right) \right)^2 =0.
\end{align*}
Hence, The 2nd-4th terms on the right hand side of \eqref{Ray-3} are reduced to
\begin{align*}
&\quad \Pi^{\alpha \beta} D_{4} e_{4 \beta} \cdot g \left( D_\alpha e_4, e_3 \right) + \frac{1}{2 } \Pi^{\alpha \beta}  \left(  D_{3}e_{4 \alpha} + D_{4} e_{3 \alpha}  \right) D_{4}  e_{4 \beta}. 
\end{align*}
Putting all these together, we complete the proof for \eqref{Ray-4}.
\end{proof}

 \section{Existence of initial data}\label{existence of data}
 In the section, we will prove the existence of data satisfying \eqref{1.2}-\eqref{1.3-associated}. Due to the quasilinear property of the RME, the strategy in Section 2.5 of Miao-Pei-Yu \cite{Miao-Yu} does not apply to our case. In our case, a natural way to construct data on $\{t=1\} \cap ( B_{1}-B_{1-2\delta} ) $ supported in $r \in (1-2\delta, 1)$ and satisfying \eqref{1.2}-\eqref{1.3-associated} is to find a solution to the RME, which exists in a neighbourhood of $\{t=1\} \cap ( B_{1}-B_{1-2\delta} )$, takes the form of $\phi(\frac{u}{\delta},\underline{u},\theta)$ and vanishes on $\{r= 1-2\delta \} \cup \{ r=1\}$. Then we can restrict this solution to the $\{t=1\}$ hypersurface, so that it qualifies for the data satisfying \eqref{1.2}-\eqref{1.3-associated}. This, as shown below, indeed reduces to a small data, finite time existence of smooth solution to the RME, which is of course well-known.

Let us introduce the new coordinates first,
 \begin{equation}\label{n-coordinate}
 \rho^{\prime\mu^{\prime}}=(u^{\prime},\underline{u}^{\prime},\theta^{\prime}) \doteq \left(\frac{u}{\delta},\underline{u},\theta\right)\quad \text{and}\quad
 \rho^{\mu}=(u,\underline{u},\theta) = (\delta u^{\prime},\underline{u}^{\prime},\theta^{\prime}).
 \end{equation}
 The new $t^\prime,\,\, r^\prime$ are defined by
  \begin{equation}\label{t-prime}
t^\prime \doteq \underline{u}^{\prime} + u^{\prime}, \quad r^\prime \doteq \underline{u}^{\prime} - u^{\prime}.
 \end{equation}
 We denote $C^\prime_{u^\prime}$ the hypersurface with constant $u^\prime$, $\underline{C}^\prime_{\ub^\prime}$ the hypersurface with constant $\ub^\prime$, and $S^\prime_{u^\prime, \ub^\prime}$ the intersection of $C^\prime_{u^\prime}$ and $\underline{C}^\prime_{\ub^\prime}$. We remark that, $\underline{C}^\prime_{1-\delta}=\underline{C}_{1-\delta}$ and $C^\prime_0=C_0$. In this section, we will always use the Greek letter with prime $\alpha^\prime, \, \beta^\prime, \, \mu^\prime, \, \nu^\prime, \cdots$ to denote the  new variables $(u^{\prime},\underline{u}^{\prime},\theta^{\prime})$ or $(t^{\prime}, r^{\prime},\theta^{\prime})$.
 Then under the above coordinate transformation \eqref{n-coordinate},
\begin{equation*}
g^{ \alpha^{\prime}\beta^{\prime}}=g^{\alpha\beta}\frac{\partial\rho^{\prime\alpha^{\prime}}}{\partial \rho^{\alpha}}\frac{\partial\rho^{\prime\beta^{\prime}}}{\partial \rho^{\beta}}
\quad \text{and} \quad g_{\alpha^{\prime}\beta^{\prime}}=g_{\alpha\beta}\frac{\partial\rho^{\alpha}}{\partial\rho^{\prime\alpha^{\prime}}}
\frac{\partial\rho^{\beta}}{\partial\rho^{\prime\beta^{\prime}}}.
\end{equation*}

\begin{figure}
\centering
\subfigure[Region in ($u, \ub$) coordinate]{
 \label{fig:subfig:a}
 \includegraphics[width=2.9in]{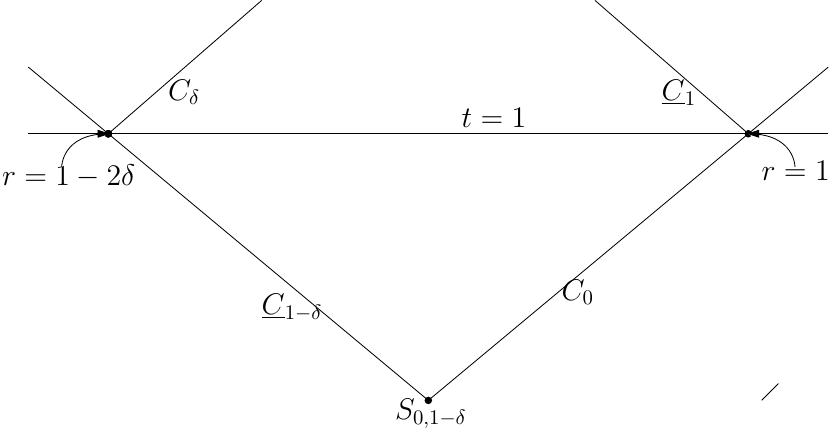}
 }
 \subfigure[Region in $(u^\prime, \ub^\prime)$ coordinate]{
 \label{fig:subfig:b}
  \includegraphics[width=2.8in]{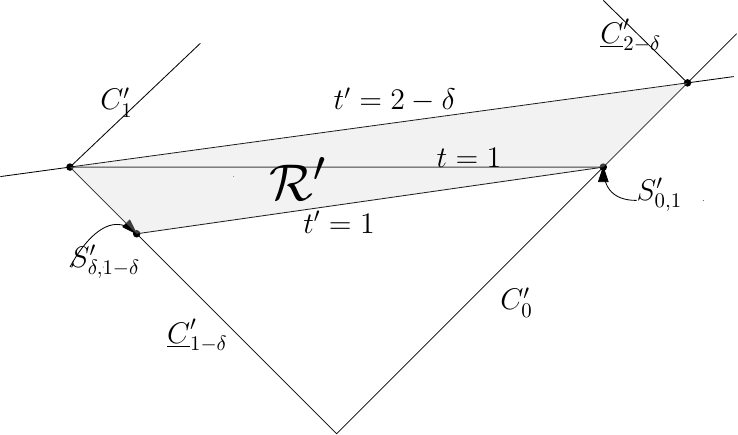}
  }
  \caption{Transformation between $(u, \ub)$ and $(u^\prime, \ub^\prime)$} \label{fig:subfig}
\end{figure}
We expect that the solution to the RME with constraint \eqref{1.2}-\eqref{1.3-associated} on $\{t=1\}$ slice
exists in a neighborhood of $\{t=1\}$ (see Figure \ref{fig:subfig:a}),
which means that we need to prove the local existence for solutions to the RME with large data. After changing to the $(u^\prime, \ub^\prime, \theta^\prime)$ coordinate system, it would be transformed into a small data existence (for solutions to the RME) in the following finite region (see Figure \ref{fig:subfig:b})
\begin{equation}\label{data-region-R-prime}
\mathcal{R}^\prime = \{(u^\prime, \ub^\prime, \theta^\prime) | 1 \leq t^\prime \leq 2-\delta,\,\, 0\leq u^\prime \leq 1,\,\, 1-\delta \leq \ub^\prime \leq 2-\delta\}.
\end{equation}

Actually, we can show that $\{t=1\} \subset \mathcal{R}^\prime$. Note that the bottom of $\Rp$ is $\{t^\prime =1\}$, i.e.
\begin{equation}\label{data-restirct-t-prime}
t^\prime=\ub^\prime +u^\prime =\delta^{-1} u+\underline{u}
= \frac{1+\delta^{-1}}{2} t+ \frac{1-\delta^{-1}}{2} r =1.
\end{equation}
On the other hand, $\ub^\prime \leq 1$ on $\{t^\prime=1\} \cap \Rp$, hence
\begin{equation}\label{data-t=1--r}
r\leq 2-t, \quad \text{on}\quad \{t^\prime=1\} \cap \Rp.
\end{equation}
\eqref{data-restirct-t-prime} and \eqref{data-t=1--r} lead to $$t\leq 1, \quad \text{on} \quad \{t^\prime =1\}  \cap \Rp.$$ In the same way, we have $u^\prime \leq 1$ on $\{t^\prime=2-\delta\} \cap \Rp$, and hence $$t \geq 1, \quad \text{on} \quad \{t^\prime =2-\delta\}  \cap \Rp.$$

We prescribe the following Cauchy data on $\{t^\prime=1\} \cap \Rp$ to the RME
\begin{equation}\label{initial}
\phi(u^{\prime},\underline{u}^{\prime},\theta^{\prime})|_{t^\prime =1}=\delta  f_{0}(r^\prime,\theta^{\prime}),\quad
\p_{t^\prime}\phi(u^{\prime},\underline{u}^{\prime},\theta^{\prime})|_{t^\prime =1}=\delta  f_{1}(r^\prime,\theta^{\prime}),
\end{equation}
where $f_{0}(r^\prime,\theta^{\prime}), \, f_{1}(r^\prime,\theta^{\prime})$ are smooth functions supported in $r^\prime \in (1-2\delta, 1)$ and $\{t^\prime =1\}$ is a spacelike hypersurface with respect to $g_{\alpha^\prime \beta^\prime}$, since
 \begin{equation*}
g_{t^\prime t^\prime}=-\delta + (\p_{t^\prime} \phi)^2, \quad g^{t^\prime t^\prime}=-\delta^{-1} - g^{-1} ( \delta^{-1} \p_{t^\prime} \phi)^2.
\end{equation*}
Moreover, the Cauchy data \eqref{initial} ensure that
\begin{equation}\label{data-initial-higher}
|\p_{u^\prime}^{i_0}\p^{i_1}_{\ub^{\prime}} \p^{i_2}_{\theta^{\prime}} \phi|_{t^\prime =1} \lesssim \delta, \quad  \text{for all} \,\, i_0,\, i_1,\, i_2 \in \mathbb{N}.
\end{equation}

In addition, the boundaries $\Cb^\prime_{1-\delta}$$\cap \{u^\prime \geq\delta\}$,
$C^\prime_0$$\cap \{\ub^\prime \geq 1\}$
are both (truncated) null cones with respect to $g_{\alpha^\prime \beta^\prime}$, and $\phi$ vanishes on these two null cones up to any finite order of derivatives. This is illustrated as follows.
We have chosen $f_{0}, \, f_{1}$ to be smooth functions supported in $r^\prime \in (1-2\delta, 1)$, so that
\begin{equation*}
\partial^{\prime k}\phi(u^{\prime},\underline{u}^{\prime},\theta^{\prime})|_{S^{\prime}_{0,1}\cup S^{\prime}_{\delta,1-\delta}}=0,\quad\text{for}\;\text{any}\;k\in\mathbb N,
\end{equation*}
where $\partial^{\prime} \in \{\partial_{t^{\prime}},\partial_{r^{\prime}}, \partial_{\theta^{\prime}}\}$.
This further entails that
 \begin{equation}\label{data-gradu-grad-ub-pu-pub}
 -2D \underline{u}^{\prime}=\partial_{u^{\prime}},\quad -2D u^{\prime}=\partial_{\underline{u}^{\prime}},\quad \text{on}\; S^{\prime}_{0,1}\cup S^{\prime}_{\delta,1-\delta}.
 \end{equation}
 Here, $D u^\prime, \, D \ub^\prime$ mean taking gradient in terms of the metric $g_{\alpha^{\prime}\beta^{\prime}}$.  Then $D u^\prime, \, D \ub^\prime$ are null vector fields on $S^\prime_{0, 1} \cup S^\prime_{\delta, 1-\delta}$, since $g_{u^\prime u^\prime}=g_{\ub^\prime \ub^\prime}=0$ on $S^\prime_{0, 1} \cup S^\prime_{\delta, 1-\delta}$.
Due to the property of finite speed propagation for wave equations, we have
\begin{equation}\label{p-phi-0-initial}
\partial^{\prime k}\phi=0,\quad \text{on}\quad \underline{C}_{1-\delta}^{\prime}\cap\{u^{\prime}\geq \delta\}\;\text{and}\;
C^{\prime}_{0}\cap \{\underline{u}^{\prime}\geq1\}, \,\,\, \forall k\in\mathbb N,
\end{equation}
and hence $D \underline{u}^{\prime}$, $D u^{\prime}$ are null on $\underline{C}_{1-\delta}^{\prime}\cap\{u^{\prime}\geq \delta\}$ and $C_{0}^{\prime}\cap\{\underline{u}^{\prime}\geq 1\}$. That is to say, these two hypersurfaces are null cones.

By the finite time existence of solutions to quasilinear wave equations with small data, we obtain that the solution to the Cauchy problem of RME \eqref{2.8}, \eqref{initial} exists in the region $\mathcal{R}^\prime$. Then the restriction of $\phi(u^{\prime},\underline{u}^{\prime},\theta^{\prime})= \phi(\frac{u}{\delta}, \ub, \theta)$ to $\{t=1\} \cap ( B_{1}-B_{1-2\delta} ) $,
\begin{equation}\label{initial-restiction-t=1}
\phi\left(\frac{u}{\delta}, \ub, \theta\right)\Big|_{\{t=1\} \cap ( B_{1}-B_{1-2\delta} ) }
\end{equation}
will automatically provide the data supported in $ r \in (1-2\delta, 1)$ and satisfying \eqref{1.2}-\eqref{1.3-associated}.

We here sketch the proof for the existence of solution to the RME in $\mathcal{R}^\prime$ with the small data \eqref{initial}. We will take $\{\partial_{u^{\prime}},\,\partial_{\underline{u}^{\prime}},\Omega^\prime=\Omega\}$ as commutators and $\xi=\partial_{t^\prime}=\frac{1}{2}(\partial_{u^{\prime}}+\partial_{\underline{u}^{\prime}})$ as the multiplier.
 Let $\phi$ be the unknown solution to the RME, and define the $k$-th order derivative
\begin{equation}\label{phi-k-data}
\phi_{k} \doteq  \partial_{u^{\prime}}^{k_{1}}\partial_{\underline{u}^{\prime}}^{k_{2}}\Omega^{\prime k_{3}}\phi, \quad k_1+k_2+k_3 =k.
\end{equation}

\begin{lemma}\label{lemma-high-order-RRME}
We have the equation for $\phi_k$
\begin{equation}\label{RRME-H}
\begin{split}
&\quad \Box_{g(\p \phi)} \phi_k
+f(r)\sum_{q<k}\delta^{-1} \partial_{u^{\prime}}\phi_{q} \pm \partial_{\underline{u}^{\prime}}\phi_{q} \pm \laplacianslash\phi_{q}+S_k+N_{k}=0,
\end{split}
\end{equation}
where $N_{k}$ denotes terms (of lower order derivatives) taking the double null forms, and
\begin{align*}
S_k & \sim \sum_{ k_1+k_2+ k_3 \leq k-1} (\p_{\ub^\prime} - \delta^{-1} \p_{u^\prime}) \phi_{k_1} \nablaslash \phi_{k_2} \nablaslash \phi_{k_3} \\
 & \quad +\sum_{ k_1+k_2+ k_3 \leq k-2} \nablaslash \phi_{k_1}  \nablaslash \phi_{k_2} \nablaslash \phi_{k_3},
 \end{align*}
involving at least two angular derivatives  and $f(r)$ is a bounded and smooth function depending on $r$.
\end{lemma}

The energy associated to the multiplier $\p_{t^\prime}$ is given by
\begin{align*}
E^2[\phi_k](t^{\prime}) &\sim  \int_{\Sigma_{t^{\prime}}} \left( \delta^{-1} |\p_{u^\prime} \phi_k|^2 + \delta^{-1} |\p_{\ub^\prime} \phi_k|^2 + | \nablaslash \phi_k|^2 \right)\sqrt{g^{\prime}} dr^{\prime} d\sigma_{S^{n-1}}.
\end{align*}
Here $g^\prime = | \det (g_{\alpha^\prime \beta^\prime}) |$ and we note that $g^\prime \sim \delta^2$.
 Define $E^2[\phi_{\leq k}] \doteq \sum_{0\leq q\leq k}E^2[\phi_{q}]$.
 We start with the bootstrap assumption: fix $N \in \mathbb N$ and $N \geq 4$,
\begin{equation}\label{bootstrap-assumption-data}
E[\phi_{\leq N}](t^{\prime})\leq \delta M, \quad \text{for} \,\, 1 \leq t^{\prime} \leq 2-\delta,
\end{equation}
 where $M$ is a large constant, which may depend on $\phi$, to be determined.
 With \eqref{bootstrap-assumption-data}, we have the $L^\infty$ estimate, by the Sobolev inequalities,
\begin{equation}\label{L-infty-data}
\|\p_{u^\prime}\phi_{q}\|_{L^{\infty}} + \|\p_{\ub^\prime}\phi_{q}\|_{L^{\infty}} + \delta^{\frac{1}{2}} \|\nablaslash \phi_{q}\|_{L^{\infty}} \lesssim \delta M, \quad q\leq N-2.
\end{equation}
The standard energy estimates in the domain $\mathcal{R}^\prime$ yield that
\begin{equation}\label{E-k-induction-data}
E^{2}[\phi_{\leq N}](t^{\prime}) \lesssim \delta^2 I^2_{N}(f_{0},f_{1})+ \int_{1}^{2-\delta} (1+\delta C(M) )  E^2[\phi_{\leq N}](\tau^{\prime})  d\tau^{\prime},
\end{equation}
where $I_{k}(f_{0},f_{1})$ denotes a constant depending only on the initial data up to  $k$ order derivatives and $C(M)$ depends on $M$.
By Gr\"{o}nwall inequality, $E^{2}[\phi_{\leq N}](t^{\prime}) \lesssim \exp(1+ \delta C(M) ) \delta^2 I^2_{N}(f_{0},f_{1})$. If $\delta$ is small enough, we choose $C  I^2_{N}(f_{0},f_{1}) <  M^2$, so that $E^{2}[\phi_{\leq N}](t^{\prime}) \leq C \delta^2 I^2_{N}(f_{0},f_{1}) < \delta^2 M^2$, where $C$ is a universal constant.
Consequently, we close the bootstrap argument and prove the existence of $\phi$ in $\mathcal{R}^\prime$.
\\

\noindent{\Large {\bf Acknowledgements.}} The authors would like to thank Siyuan Ma and Pin Yu for helpful comments and discussions. We also would like to thank the anonymous referees for their pertinent comments and valuable suggestions. J.W. is supported by the NSF of Fujian Province (Grant No. 2018J05010), NSFC (Grant No. 11701482) and the Fundamental Research Funds for the Central Universities (Grant No. 20720170002). C.W. is supported by the NSFC (Grant Nos. 12071435, 11871212) and the Scientific Research Foundation of Zhejiang Sci-Tech University (Grant No. 16062021-Y).


\end{document}